\documentclass[12pt,reqno]{amsart}
\usepackage{a4wide}
\usepackage{amsmath}
\usepackage{amsthm}
\usepackage{amssymb}
\usepackage{paralist}
\usepackage[misc]{ifsym}
\usepackage{epsfig}
\usepackage{epstopdf}
\usepackage[colorlinks=true]{hyperref}

\hypersetup{urlcolor=blue, citecolor=red}

\allowdisplaybreaks

\newtheorem{theorem}{Theorem}[section]

\newtheorem{lemma}[theorem]{Lemma}
\newtheorem{proposition}[theorem]{Proposition}

\theoremstyle{definition}

\newtheorem{remark}[theorem]{Remark}

\newcommand{\R}{{\mathbb R}}

\title[A critical fractional Schr\"{o}dinger equation]
{A new type of bubble solutions for a critical fractional Schr\"{o}dinger equation}

\author[Fan Du, Qiaoqiao Hua and Chunhua Wang]{}

\subjclass{Primary: 35B40, 35B45; Secondary:35J40.}

\keywords{Bubbling solutions, fractional Schr\"{o}dinger equations, local Pohozaev identities, reduction argument.}

\thanks{$^*$Corresponding author: Chunhua Wang}

\begin{document}
\maketitle
\centerline{\scshape
Fan Du$^{{\href{mailto:fdu@mails.ccnu.edu.cn}{\textrm{\Letter}}}1}$,
Qiaoqiao Hua$^{{\href{mailto:hqq@mails.ccnu.edu.cn}{\textrm{\Letter}}}1}$
and Chunhua Wang$^{{\href{mailto:chunhuawang@ccnu.edu.cn}{\textrm{\Letter}}}*1,2}$}

\medskip

{\footnotesize
 \centerline{$^1$School of Mathematics and Statistics, Central China Normal University}
 \centerline{ Wuhan, 430079, China}
}

\medskip

{\footnotesize
 \centerline{$^2$Hubei Key Lab--Math. Sci., Central China Normal University} \centerline{ Wuhan 430079, China, Wuhan, 430079, China}
}

\begin{abstract}
We consider the following critical fractional Schr\"{o}dinger equation
\begin{equation*}
  (-\Delta)^s u+V(|y'|,y'')u=u^{2_s^*-1},\quad u>0,\quad y =(y',y'') \in \R^3\times\R^{N-3},
\end{equation*}
where $N\geq 3,s\in(0,1)$, $2_s^*=\frac{2N}{N-2s}$ is the fractional critical Sobolev exponent and $V(|y'|,y'')$ is a bounded non-negative function in $\R^3\times\R^{N-3}$. If $r^{2s}V(r,y'')$ has a stable critical point $(r_0,y_0'')$ with $r_0>0$ and $V(r_0,y_0'')>0$, by using a finite-dimensional reduction method and various local Pohozaev identities, we prove that the problem above has a new type of infinitely many solutions which concentrate at points lying
on the top and the bottom of a cylinder. And the concentration points of the bubble solutions include saddle points of the function $r^{2s}V(r,y'')$. We have to overcome some difficulties caused by the non-localness of the fractional Laplacian.
\end{abstract}

\section{Introduction and the main results}\label{sec1}
\setcounter{equation}{0}

In this paper, we consider the following nonlinear elliptic problem
\begin{equation}\label{1.1}
  (-\Delta)^s u+V(y)u=u^{2_s^*-1},\quad u>0,\quad y  \in \R^N,
\end{equation}
where $s\in(0,1)$, $N\geq3$, $2_s^*=\frac{2N}{N-2s}$ is the fractional critical Sobolev exponent and $V(y)$ is non-negative and bounded potential. For any $s \in (0,1)$, $(-\Delta)^s$ is the fractional Laplacian, which is a nonlocal operator defined as
\begin{equation}\label{1.2}
  \begin{split}
  (-\Delta)^s u(y)&=c(N,s)P.V.\int_{\R^N}\frac{u(y)-u(x)}{|x-y|^{N+2s}}dx\\
  &=c(N,s)\lim\limits_{\varepsilon \to 0^+} \int_{\R^N \backslash B_\varepsilon(y)} \frac{u(y)-u(x)}{|x-y|^{N+2s}} dx,
  \end{split}
\end{equation}
where $P.V.$ is the Cauchy principal value and $c(N,s)$ is a positive dimensional constant that depends on $N$ and s.
This operator is well defined in $C_{loc}^{1,1}(\R^N)\cap \mathcal{L}_s(\R^N)$, where
\begin{equation*}
 \mathcal{L}_s(\R^N)= \bigg\{ u \in L_{loc}^1(\R^N): \int_{\R^N} \frac{|u(y)|}{1+|y|^{N+2s}}dy < \infty \bigg\}.
\end{equation*}
For more details on the fractional operator and fractional Sobolev space, we refer to \cite{ambrosio2021Nonlinear,chen2020fractional,di2012hitchhiker's} and the references therein.

In recent years, there has been a great deal of interest in fractional Laplacian. One of the main advantages of the fractional Laplacian is its ability to model anomalous diffusion, such as plasmas, flames propagation and chemical reactions in liquids. Additionally, the fractional Laplacian is used to model quasi-geostrophic flows, turbulence and water waves, molecular dynamics and relativistic quantum mechanics of stars (see \cite{bouchaud1990anomalous,caffarelli2010drift,tarasov2006fractional} and the references therein). In probability and finance, the fractional Laplacian plays an essential role in the theory of L\'{e}vy processes, and it can be understood as the infinitesimal generator of a stable L\'{e}vy diffusion process (see \cite{applebaum2009levy}). This connection with L\'{e}vy processes makes the fractional Laplacian a useful tool for modeling various financial products, such as American options (see \cite{di2012hitchhiker's}).

Solutions of problem \eqref{1.1} are related to the existence of standing wave solutions to the following fractional Schr\"odinger equation
\begin{equation}\label{0.1}
\left\{
  \begin{array}{ll}
   i\partial_t\Psi+ (-\Delta)^s\Psi=F(x,\Psi), \quad &\text{~in~} \R^N,\\
    \lim\limits_{|x|\to\infty}|\Psi(x,t)|=0, \quad &\text{~for~all~} t>0.
  \end{array}
\right.
\end{equation}
That is, solutions with the form $\Psi(x,t)=e^{-ict}u(x)$, where $c$ is a constant.

When $s=1$, Chen, Wei and Yan \cite{chen2012infinitely} considered the following nonlinear elliptic equation
\begin{equation}\label{0.A}
  -\Delta u+V(|y'|,y'')u=u^{\frac{N+2}{N-2}},\quad u>0,\quad u\in H^1(\R^N).
\end{equation}
They proved that \eqref{0.A} has infinitely many non-radial solutions if $N \geq 5$, $V (y)$ is radially symmetric and $r^2V(r)$ has a local maximum point, or a local minimum point $r_0 > 0$ with $V(r_0)>0$. Later, Peng, Wang and Wei \cite{peng2019constructing} constructed infinitely many solutions on a circle under a weak symmetric condition of $V(y)$, where they only required that $r^2V(r,y'')$ has a stable critical point $(r_0,y^{''}_0)$ with $r_0>0$ and $V(r_0,y^{''}_0)>0$. In \cite{guo2020solutions}, Guo, Liu and Nie proved that problem \eqref{1.1} had infinitely many solutions concentrated on a circle.
Recently, in \cite{duan2023doubling}, Duan, Musso and Wei constructed infinitely many solutions, where the bubbles were concentrated at points lying on the top and the bottom circles of a cylinder.

In this paper, under a class of weaker symmetric conditions for $V(y)$, we obtain a new type of infinitely many solutions for problem \eqref{1.1} combining a finite-dimensional reduction method and various local Pohozaev identities. Motivated by \cite{duan2023doubling,guo2020solutions,peng2018construction}, we consider the case $V(y)=V(|y'|, y'')=V(r, y'')$, where $y=(y', y'') \in \R^3 \times \R^{N-3}$.
We assume that $V(y)\geq0$ is bounded and satisfies

$(V)$ $r^{2s}V(r,y'')$ has a stable critical point $(r_0,y''_0)$ in the following sense: $r^{2s}V\linebreak (r,y'')$ has a  critical point $(r_0,y''_0)$ satisfying $r_0 > 0$, $V(r_0,y''_0) >0$ and
      $$\text{deg}\big(\nabla\big(r^{2s}V(r,y''),(r_0,y''_0)\big)\big)\neq 0.$$

For $5\leq N\leq 8$, we assume that $s$ satisfies
\begin{equation}\label{s}
\left\{
  \begin{array}{ll}
    \frac{N+3-\sqrt{N^2-2N+9}}{4}<s<\frac{3(N-1)-\sqrt{N^2-2N+9}}{8}, & N=5; \\
    \frac{N(\sqrt{8N-11}-1)}{8N-12}<s<1, & 6\leq N\leq 8.
  \end{array}
\right.
\end{equation}

The main result of this paper is the following:

\begin{theorem}\label{theo1.2}
Suppose that $V>0$ is bounded and belongs to $C^1$. If $V(|y'|,y'')$ satisfies $(V)$, $5\leq N\leq 8$ and $s$ satisfies \eqref{s}, then problem \eqref{1.1} has infinitely many non-radial solutions, whose energy can be made arbitrarily large.
\end{theorem}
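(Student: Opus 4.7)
The plan is to construct the solutions by a finite-dimensional Lyapunov–Schmidt reduction around a sum of bubbles placed on two circles of a cylinder, and then fix the concentration parameters through a system of local Pohozaev identities adapted to the fractional operator via the Caffarelli–Silvestre extension.

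\textbf{Ansatz and symmetry.} Let $U_{x,\lambda}$ denote the Aubin–Talenti ground state of $(-\Delta)^s u=u^{2_s^*-1}$ in $\R^N$, centered at $x$ with concentration parameter $\lambda$. For each large integer $k$ I would place $2k$ bubbles at points $x_j^\pm$ ($j=1,\ldots,k$) equispaced in angle on two circles of radius $\bar r$ in the $(y_1,y_2)$-plane at heights $\pm \bar h$ in the $y_3$-direction, all sharing the same $y''$-coordinate. This arrangement is invariant under cyclic rotation by $2\pi/k$ in $(y_1,y_2)$ and reflection across $\{y_3=0\}$. The ansatz is
\begin{equation*}
W_{\bar r,y'',\bar h,\lambda}=\sum_{j=1}^{k}\bigl(U_{x_j^+,\lambda}+U_{x_j^-,\lambda}\bigr),
\end{equation*}
with free parameters $(\bar r,y'',\bar h,\lambda)$ chosen so that $\sqrt{\bar r^{\,2}+\bar h^{\,2}}$ is close to $r_0$, $y''$ close to $y''_0$, and $\lambda\sim k^{(N-2s)/(N-2-2s)}$, the last choice balancing the potential contribution against the dominant nearest-neighbour bubble interaction.

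\textbf{Reduction.} Working in the subspace of functions invariant under the symmetry group above, the kernel of the linearization around $W$ collapses to four directions $\partial_\lambda W,\partial_{\bar r}W,\partial_{y''}W,\partial_{\bar h}W$. I would prove invertibility of the linearized operator on the orthogonal complement in a weighted $L^\infty$-norm tuned to the $|y|^{-(N-2s)}$ decay of the bubbles, then solve the projected equation for an error $\phi=\phi_{\bar r,y'',\bar h,\lambda}$ by the contraction mapping principle. The range \eqref{s} for $s$ is precisely what ensures that the resulting error is of strictly lower order than the leading terms of the reduced functional.

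\textbf{Parameter selection via local Pohozaev identities.} Rather than differentiating the reduced energy directly (delicate in the nonlocal setting), I would work with the Caffarelli–Silvestre extension $\widetilde W+\widetilde\phi$ in $\R^{N+1}_+$ with degenerate weight $t^{1-2s}$, multiply the extended equation by derivatives with respect to $\bar r$, $y''$, $\bar h$ and by a scaling vector field, and integrate over half-balls $B_d(x_j^\pm)\times(0,\infty)$. This produces four scalar identities whose leading terms form a gradient in $(\bar r,y'',\bar h,\lambda)$ of an explicit model function built from $r^{2s}V(r,y'')$, the nearest-neighbour interaction along each circle, and the top–bottom interaction across the cylinder. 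Assumption $(V)$ yields a nonzero Brouwer degree of $\nabla(r^{2s}V)$ at $(r_0,y''_0)$; combined with monotonicity in $\bar h$ and $\lambda$ of the interaction terms, a standard degree argument locates a solution of this finite system inside a small box, hence a genuine solution of \eqref{1.1}. Letting $k\to\infty$ produces infinitely many geometrically distinct solutions of diverging energy, the dihedral placement precluding radial symmetry.

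\textbf{Main obstacle.} The central difficulty is the nonlocality of $(-\Delta)^s$: the bubbles decay only polynomially, so neither the linear estimates nor the Pohozaev remainders localize as cleanly as in the case $s=1$. I would address this through sharp tail estimates for bubble interactions and systematic use of the extension to reformulate nonlocal quantities as boundary and weighted-bulk integrals in $\R^{N+1}_+$. The dimension restriction $5\le N\le 8$ together with \eqref{s} is exactly what makes the error and the Pohozaev remainders strictly of lower order than the main terms, so that the degree argument closes.
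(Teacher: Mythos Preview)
Your outline matches the paper's strategy at the conceptual level---reduction around a $2k$-bubble configuration on a cylinder, weighted $L^\infty$ estimates, and Pohozaev identities via the Caffarelli--Silvestre extension---but several details diverge, and one of them is a genuine gap. A minor slip first: the correct concentration scale is $\lambda\sim k^{(N-2s)/(N-4s)}$, obtained by balancing $V/\lambda^{2s}$ against the nearest-neighbour interaction $k^{N-2s}/\lambda^{N-2s}$; your exponent $(N-2s)/(N-2-2s)$ is the $s=1$ value. Also, the paper truncates each bubble by a fixed cutoff $\eta$ and runs the Pohozaev identities on a \emph{single} large half-ball $B_\rho^+$ around $(r_0,y_0'')$, not on individual half-balls around each $x_j^\pm$; the cutoff makes $Z$ compactly supported so that the surface terms on $\partial'' B_\rho^+$ involve only $\varphi$ and are controllable.

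The substantive gap is your plan to determine all four parameter families through Pohozaev identities. The paper uses the translation and scaling identities only for $(\bar r,\bar y'')$: these yield $\nabla_{\bar r,\bar y''}\bigl(\bar r^{2s}V(\bar r,\bar y'')\bigr)=o(1)$, which is where $(V)$ enters via degree. For $\lambda$ and $\bar h$ it instead imposes the direct conditions $\langle I'(u_k),\partial_\lambda Z\rangle=0$ and $\langle I'(u_k),\partial_{\bar h}Z\rangle=0$. The reason is that the error $\varphi$ contributes at order $\lambda^{-s-\epsilon}$ to $\partial_{\bar r}F$ and $\partial_{\bar y''}F$ (Lemma~\ref{lemB.3}), swamping the $\lambda^{-2s}$ main term, so Pohozaev is \emph{forced} there; but for $\partial_{\bar h}F$ the symmetry of $V$ in $y'$ and evenness in $y_2,y_3$ push the $\varphi$-contribution down to $O(\bar h\,\lambda^{-2s-\epsilon})$ (Lemma~\ref{lemB.2a}), and condition~\eqref{s} is exactly what makes this work via the sharpened bound $\|\varphi\|_*\le C\lambda^{-(2s+1)/2-\epsilon}$ of Proposition~\ref{prop2.5} (see Remarks~\ref{rema1.3.1} and~\ref{add-rem}). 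There is no evident Pohozaev identity adapted to the $\bar h$-direction---translation in $y_3$ and scaling do not isolate it---so your uniform Pohozaev treatment of all parameters lacks the needed ingredient; the mixed approach (Pohozaev for $(\bar r,\bar y'')$, direct derivatives for $(\bar h,\lambda)$) is what actually closes the argument.
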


Now, we outline the main idea in the proof of Theorem \ref{theo1.2} and discuss the main difficulties in the proof of desired results above.

We will prove Theorem \ref{theo1.2} by a finite-dimensional reduction method, combined with various local Pohozaev identities. The finite-dimensional reduction method has been extensively used to construct solutions for equations with critical growth, see \cite{cao2002scalar,chang1991perturbation,del2015intermediate,guo2013infinitely,guo2016infinitely,noussair2001scalar,yan2000concentration} and the references therein. In \cite{wei2010infinitely}, Wei and Yan used the number of the bubbles of the solutions as the parameter to construct the bubbling solutions for a class of prescribed scalar curvature problem. After that, there are a number of researches focusing on looking for infinitely many solutions for non-compact elliptic problems, see \cite{chen2012infinitely,del2015intermediate,guo2016infinitely,guo2017local,peng2019constructing,peng2018construction,wang2010neumann} and the references therein.

The main purpose of this paper is to construct a new type of bubble solutions like \cite{duan2023doubling}, where they constructed $2k$-bubble solutions concentrating at points lying on the upper and lower circles of a cylinder. In order to release the assumptions about the function $V(y)$, inspired by \cite{guo2020solutions,peng2018construction}, we try to construct solutions by using various local Pohozaev identities to find algebraic equations, which determine the location of the bubbles. Hence, we can construct $2k$-bubble solutions, symmetric with respect to the third coordinate, concentrating at even the saddle points of $V(y)$, which just has a weaker sense of symmetry. However, in the present paper, since the bubbles of our solutions may be prettily close when $\bar{h}$ tends to 0, we need to do more delicate computations and obtain more precise estimates. We will discuss this in more details later.

Before we close this introduction, let us outline the main idea in the proof of Theorem \ref{theo1.2}.

Denote by $D^{s,2}(\R^N)$ the completion of $C_c^{\infty}(\R^N)$ with respect to the Gagliardo semi-norm
\begin{equation*}
  \| (-\Delta)^{\frac{s}{2}}u\|_{L^2(\R^N)}
  =\Big(\int\int_{\R^{2N}}\frac{|u(x)-u(y)|^2}{|x-y|^{N+2s}}dxdy\Big)^{\frac{1}{2}},
\end{equation*}
or equivalently (see \cite{Pezzo2020Spectrum}), when $N>2s$,
\begin{equation*}
  D^{s,2}(\R^N)=\Big\{u \in L^{2^*_s}(\R^N):\| (-\Delta)^{\frac{s}{2}}u\|_{L^2(\R^N)}<\infty\Big\}.
\end{equation*}

We will construct solutions in the following space:
$$H^s(\R^N)=\Big\{u \in D^{s,2}(\R^N): \int_{\R^N} V(y)u^2 dy < +\infty\Big\},$$
with the norm:
$$\|u\|_{H^s(\R^N)}=\Big(\|(-\Delta)^{\frac{s}{2}}u\|_{L^2(\R^N)}^2+\int_{\R^N}V(y)u^2 dy\Big)^\frac{1}{2}.$$

We define the functional $I$ on $H^s(\R^N)$ by:
\begin{equation}\label{L.1}
  I(u)=\frac{1}{2}\int_{\R^N}\big(|(-\Delta)^{\frac{s}{2}}u|^2 + V(|y'|,y'')u^2\big) dy- \frac{1}{2_s^*}\int_{\R^N}(u)_+^{2_s^*} dy,
\end{equation}
where $(u)_+ = \max\{u,0\}$. Then solutions of problem \eqref{1.1} correspond to the critical points of the functional $I$.

It is well known that the following functions
\begin{equation}\label{1.04}
U_{x,\lambda}(y)=\frac{C_N\lambda^{\frac{N-2s}{2}}}{(1+\lambda^2|y-x|^2)^{\frac{N-2s}{2}}}, \quad \lambda >0, \quad x\in\R^N,
\end{equation}
where $C_N=(4^s\gamma_0)^{\frac{N-2s}{4s}}$ with $\gamma_{0}=\frac{\Gamma(\frac{N+2s}{2})}{\Gamma(\frac{N-2s}{2})}$, are the unique positive solutions, up to the translation and scaling, of the problem (see \cite{lieb1983sharp})
\begin{equation}\label{1.4}
  (-\Delta)^s u=u^{\frac{N+2s}{N-2s}}, \quad u>0 \quad \text{in}~ \R^N.
\end{equation}
Moreover, the functions
\begin{equation*}
  Z_0(y)=\frac{\partial U_{x,\lambda}(y)}{\partial \lambda}\Big|_{\lambda=1}, \quad Z_i(y)=\frac{\partial U_{x,\lambda}(y)}{\partial y_i}, \quad i=1,2,\cdots,N
\end{equation*}
solving the linearized problem
\begin{equation*}
  (-\Delta)^s \phi =(2_s^*-1)U_{x,\lambda}^{2_s^*-2}\phi,\quad \phi>0 \quad \text{in}~\R^N,
\end{equation*}
are the kernels of the linearized operator associated with problem \eqref{1.4}.

Define
\begin{equation*}
  \begin{split}
H_s=\Big\{u : &u \in H^s(\R^N), u \text{ is even in}~ y_2, y_3, \\ &u(r\cos\theta,r\sin\theta,y_3,y'')=u\Big(r\cos\big(\theta+\frac{2j\pi}{k}\big),r\sin\big(\theta+\frac{2j\pi}{k}\big),y_3,y''\Big)\Big\},
  \end{split}
\end{equation*}
where $r=\sqrt{y_1^2+y_2^2}$ and $\theta=\arctan\frac{y_2}{y_1}$.

Let
\begin{equation*}
  \left\{
  \begin{array}{ll}
    x_j^+=\big(\bar{r} \sqrt{1-\bar{h} ^2} \cos \frac{2(j-1)\pi}{k}, \bar{r} \sqrt{1-\bar{h}^2}\sin\frac{2(j-1)\pi}{k},\bar{r}\bar{h},\bar{y}''\big), & j=1,\cdots,k, \vspace{0.15cm}\\
    x_j^-=\big(\bar{r}\sqrt{1-\bar{h}^2}\cos\frac{2(j-1)\pi}{k},\bar{r}\sqrt{1-\bar{h}^2}\sin\frac{2(j-1)\pi}{k},-\bar{r}\bar{h},\bar{y}''\big), & j=1,\cdots,k,
  \end{array}
\right.
\end{equation*}
where $\bar{y}''$ is a vector in $\R^{N-3}$, $\bar{h}$ goes to 0 and $(\bar{r},\bar{y}'')$ is close to $(r_0,y_0'')$.

The idea of constructing solutions is to glue some $U_{x_j^\pm,\lambda}$ as an approximation solution. In order not only to deal with the slow decay of this function when $N$ is not big, but also to simplify some computations, we introduce the smooth cut off function $\eta(y)=\eta(|y'|,y'')$ satisfying $\eta=1$ if $|(r,y'')-(r_0,y_0'')|\leq\sigma$, $\eta=0$ if $|(r,y'')-(r_0,y_0'')|\geq 2\sigma$, $0\leq\eta\leq1$ and $|\nabla \eta|\leq C$, where $\sigma>0$ is a small constant such that $r^{2s}V(r,y'')>0$ if $|(r,y'')-(r_0,y_0'')|\leq10\sigma$. Moreover, we can assume that $\eta$ is even in $y_2$ and $y_3$.

Denote
\begin{equation*}
  Z_{x_j^{\pm},\lambda}=\eta U_{x_j^{\pm},\lambda},\quad Z_{\bar{r},\bar{h},\bar{y}'',\lambda}^*=\sum\limits_{j=1}^{k} U_{x_j^{+},\lambda}+\sum\limits_{j=1}^{k} U_{x_j^{-},\lambda},
\end{equation*}
and
\begin{equation*}
  Z_{\bar{r},\bar{h},\bar{y}'',\lambda}=\sum\limits_{j=1}^{k} \eta U_{x_j^{+},\lambda}+\sum\limits_{j=1}^{k} \eta U_{x_j^{-},\lambda}.
\end{equation*}
By the weak symmetry of $V(y)$, we observe that $V(x^\pm_j)=V(\bar{r},\bar{y}'')$ for $j=1,\cdots,k$.

In this paper, we assume that $k>0$ is a large integer, $\lambda \in [L_0k^{\frac{N-2s}{N-4s}},L_1k^{\frac{N-2s}{N-4s}}]$ for some constants $L_1>L_0>0$ and $(\bar{r},\bar{h},\bar{y}'')$ satisfies
\begin{equation}\label{1.7}
  |(\bar{r},\bar{y}'')-(r_0,y''_0)|\leq \theta, \quad \bar{h}\in[M_0k^{-\frac{N-2s-1}{N-2s+1}},M_1k^{-\frac{N-2s-1}{N-2s+1}}],
\end{equation}
for some constants $M_1>M_0>0$, and $\theta>0$ is a small constant.

In order to prove Theorem \ref{theo1.2}, we will show the following result.

\begin{theorem}\label{theo1.4}
Suppose that $5\leq N\leq 8$ and $s$ satisfies \eqref{s}, then there exists a positive integer $k_0>0$, such that for any integer $k>k_0$, problem \eqref{1.1} has a solution $u_k$ of the form
\begin{equation}\label{1.8}
  u_k=Z_{\bar{r}_k,\bar{h}_k,\bar{y}''_k,\lambda_k}+\varphi_k=\sum\limits_{j=1}^{k} \eta U_{x_j^{+},\lambda_k}+\sum\limits_{j=1}^{k} \eta U_{x_j^{-},\lambda_k}+\varphi_k,
\end{equation}
where $\lambda_k \in [L_0k^{\frac{N-2s}{N-4s}},L_1k^{\frac{N-2s}{N-4s}}]$, $\bar{h}_k\in[M_0k^{-\frac{N-2s-1}{N-2s+1}},M_1k^{-\frac{N-2s-1}{N-2s+1}}]$ and $\varphi_k\in H_s$. Moreover, as $k \to +\infty$, $|(\bar{r}_k,\bar{y}''_k)-(r_0,y''_0)|= o(1)$, and $\lambda_k^{-\frac{N-2s}{2}}\|\varphi_k\|_{L^{\infty}} \to 0$.
\end{theorem}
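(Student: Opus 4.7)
The plan is to implement a finite-dimensional Lyapunov--Schmidt reduction with parameters $(\bar{r},\bar{h},\bar{y}'',\lambda)$ playing the role of the unknowns determined at the end. Write the unknown solution as $u=Z_{\bar r,\bar h,\bar y'',\lambda}+\varphi$, and decompose the problem $(-\Delta)^s u+Vu=u^{2_s^*-1}$ into two parts. Let
\[
E_{\bar r,\bar h,\bar y'',\lambda}\subset H_s
\]
be the closed subspace consisting of functions $\varphi\in H_s$ satisfying the orthogonality conditions $\langle \varphi,\partial_\lambda Z\rangle=\langle \varphi,\partial_{\bar r} Z\rangle=\langle \varphi,\partial_{\bar h} Z\rangle=\langle \varphi,\partial_{\bar y''_\ell} Z\rangle=0$ in the weighted inner product coming from $(2_s^*-1)(Z^*)^{2_s^*-2}$. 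The infinite-dimensional part asks for $\varphi\in E$ solving the equation modulo the finite-dimensional kernel, while the finite-dimensional part requires the Lagrange multipliers to vanish, which I would replace by a system of local Pohozaev identities.

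For the infinite-dimensional piece, I would first establish a uniform invertibility estimate for the linearized operator $L_k\varphi:=(-\Delta)^s\varphi+V\varphi-(2_s^*-1)(Z^*)^{2_s^*-2}\varphi$ acting on $E$, via a contradiction/blow-up argument: any approximating sequence where $\|L_k\varphi_k\|_{**}\ll\|\varphi_k\|_*$ would, after rescaling around one bubble $x_j^+$, converge to a solution of the linearized equation for $U_{x,1}$, which by non-degeneracy lies in the span of $\{Z_0,\ldots,Z_N\}$; the orthogonality conditions together with the $H_s$-symmetry force the limit to vanish, a contradiction. The error $R_k:=(-\Delta)^s Z+VZ-(Z)_+^{2_s^*-1}$ is then estimated in a suitable weighted $L^\infty$ norm $\|\cdot\|_{**}$ with weight $\sum_j(1+\lambda|y-x_j^\pm|)^{-\tau}$: one isolates the principal contributions from the potential term $V U_{x_j^\pm,\lambda}$, the cut-off error from $\eta$, the pairwise interactions $U_{x_j^+,\lambda}^{2_s^*-2}U_{x_i^+,\lambda}$ along the ring, and the upper/lower interactions $U_{x_j^+,\lambda}^{2_s^*-2}U_{x_j^-,\lambda}$. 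A contraction argument then produces $\varphi=\varphi(\bar r,\bar h,\bar y'',\lambda)$ satisfying $\lambda^{-(N-2s)/2}\|\varphi\|_{L^\infty}\to 0$.

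For the reduced problem, rather than differentiating the energy $I(Z+\varphi)$ with respect to parameters, I would exploit local Pohozaev identities on balls $B_\rho(x_j^+)$ of radius $\rho\sim 1/k$ applied to the full solution $u=Z+\varphi$. These yield four scalar equations encoding, at leading order, (i) a balance equation for $\lambda$ from the dilation identity, (ii) a balance equation for $\bar r$ and $\bar y''$ from the translation identities in the $(r,y'')$-directions, and (iii) a balance equation for $\bar h$ from the translation identity in the $y_3$-direction. Expanding each side, the dominant terms reduce to gradients of $r^{2s}V(r,y'')$ evaluated at $(\bar r,\bar y'')$, balanced against the interaction contributions of size $k^{N-2s}\lambda^{-(N-2s)}$ and $(\lambda\bar h)^{-(N-2s)}$. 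Condition $(V)$ on the topological degree of $\nabla(r^{2s}V)$ at $(r_0,y''_0)$ together with the explicit sign of the $\bar h$-interaction then allows a Brouwer-degree/Miranda argument to produce a zero of the algebraic system in the box \eqref{1.7}.

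The main obstacle will be the fact that, unlike earlier constructions where all bubbles sit far apart on a single ring, here the upper and lower bubbles $U_{x_j^+,\lambda}$ and $U_{x_j^-,\lambda}$ are separated by only $2\bar r\bar h\sim k^{-(N-2s-1)/(N-2s+1)}$, which may be \emph{much smaller} than the intra-ring spacing $\sim 1/k$. Consequently, the interaction $(\lambda\bar h)^{-(N-2s)}$ is a leading-order contribution of the same order as $\lambda^{-2s}V(r_0,y''_0)$ and must be tracked with the correct sign and constant, not merely absorbed as a remainder. This forces delicate expansions of $U_{x_j^+,\lambda}(x_j^-)$ and of $\int U_{x_j^+,\lambda}^{2_s^*-1}U_{x_j^-,\lambda}$, and it is precisely the compatibility of these two scales, namely that $\lambda\sim k^{(N-2s)/(N-4s)}$ and $\bar h\sim k^{-(N-2s-1)/(N-2s+1)}$ solve the matching linear system, that pins down the ranges of $N$ and $s$ appearing in \eqref{s}; the dimension restriction $5\le N\le 8$ ensures the error term $\|\varphi\|_*$ is genuinely of lower order than the principal Pohozaev quantities, so the reduced system remains a perturbation of a non-degenerate finite-dimensional problem.
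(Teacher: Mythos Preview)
Your overall architecture---Lyapunov--Schmidt reduction, invertibility of the linearized operator by blow-up, contraction for $\varphi$, then a finite-dimensional system solved by a degree argument under condition $(V)$---matches the paper. However, your implementation of the reduced step differs from the paper's in two respects, and your discussion of scales contains an error.

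\textbf{Where the Pohozaev identities live.} You propose to apply local Pohozaev identities on small balls $B_\rho(x_j^+)$ of radius $\rho\sim 1/k$. The paper does \emph{not} do this: it uses a single macroscopic ball $B_\rho=\{|y-(r_0,y_0'')|\le\rho\}$ with $\rho\in(2\sigma,5\sigma)$ containing all $2k$ bubbles. On such a ball the boundary terms in the $s$-harmonic extension are trivially $O(k/\lambda^{2s+\epsilon})$ because every bubble center lies at distance $\ge\sigma$ from $\partial''B_\rho^+$; this is precisely why the cut-off $\eta$ was introduced. Your small-ball version would require controlling $\int_{\partial''B^+_{1/k}(x_1^+)}t^{1-2s}|\nabla\tilde u|^2$, which is more delicate for the extension problem.

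\textbf{Which parameters come from Pohozaev.} The paper uses Pohozaev identities only for $(\bar r,\bar y'')$ (translation in $y_i$, $i\ge 4$, and the scaling identity), yielding $\nabla_{(\bar r,\bar y'')}\bigl(\bar r^{2s}V(\bar r,\bar y'')\bigr)=o(1)$. For $\lambda$ and $\bar h$ it instead computes $\langle I'(u_k),\partial_\lambda Z\rangle$ and $\langle I'(u_k),\partial_{\bar h}Z\rangle$ directly. The reason is symmetry: on the macroscopic ball (centered on the plane $y_3=0$) and with $u_k$ even in $y_3$, the $y_3$-translation Pohozaev identity is trivially $0=0$ and cannot detect $\bar h$. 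Your small-ball approach avoids this obstruction, but at the cost noted above.

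\textbf{Scale error.} You write that the upper/lower separation $2\bar r\bar h\sim k^{-(N-2s-1)/(N-2s+1)}$ ``may be much smaller'' than the intra-ring spacing $\sim 1/k$. Since $(N-2s-1)/(N-2s+1)<1$, in fact $\bar h\gg 1/k$, so the opposite holds. Correspondingly, the leading lower-ring interaction is not a single term $(\lambda\bar h)^{-(N-2s)}$ but the sum $\sum_{j=1}^k(\lambda|x_1^+-x_j^-|)^{-(N-2s)}\sim k/(\lambda^{N-2s}\bar h^{N-2s-1})$; it is this summed quantity, balanced against $k^{N-2s}/\lambda^{N-2s}$ from the same-ring interactions, that fixes $\bar h\sim k^{-(N-2s-1)/(N-2s+1)}$. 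Getting this exponent right is essential for the $\partial_{\bar h}$ equation, and the restriction \eqref{s} is what guarantees $\|\varphi\|_*\le C\lambda^{-(2s+1)/2-\epsilon}$ is small enough not to pollute the main term there.
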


\begin{remark}\label{rema1.3.1}
If we denote $\tau=\frac{N-4s}{2(N-2s)}$, when $5\leq N\leq 8$, then the condition \eqref{s} implies that $\min\big\{\frac{N-2s}{2}-\tau,2s-\tau,\frac{2s}{N-2s}(\frac{N+2s}{2}-\tau)\big\}>\frac{2s+1}{2}$, which ensures
the critical point structure of $\frac{\partial F}{\partial \bar{h}}$ is not destroyed by $\varphi$. If we can improve the estimate of $\varphi$,
then we can get a wider range of $s$ and $N$.
\end{remark}

\begin{remark}\label{add-rem}
Since there involves a new parameter $\bar{h}$ in the points sequence of $\{x_{j}^{\pm}\}_{j=1}^{k},$
compared with \cite{guo2020solutions}, during the reduction process, we have to improve the estimate of $\varphi$(see Proposition \ref{prop2.5}).
\end{remark}

\begin{remark}\label{rema1.3}
Note that the bubble solutions $u_k$ concentrate at two kinds of points $\{x_{j}^{\pm}\}^{k}_{j=1}$ which are axisymmetric on the third coordinate and the number of bubbles can be made arbitrarily large. This blowing-up phenomenon of clustering cannot occur in the case that the potential function $V(y)$ just has isolated critical points. In particular, when taking $y_0''=0\in \R^{N-3}$, $u_k$ concentrates on a pair of symmetric points with respect to the region.
\end{remark}

By the reduction argument, we first use $Z_{\bar{r},\bar{h},\bar{y}'',\lambda}$ as an approximate solution to obtain a unique function $\varphi_{\bar{r},\bar{h},\bar{y}'',\lambda}$. Then the problem of finding critical points for $I(u)$ with the form \eqref{1.8} can be reduced to that of finding critical points of the following function
\begin{equation*}
F(\bar{r},\bar{h},\bar{y}'',\lambda):=I(Z_{\bar{r},\bar{h},\bar{y}'',\lambda}+\varphi_{\bar{r},\bar{h},\bar{y}'',\lambda}),
\end{equation*}
where $\bar{r},\bar{h},\bar{y}''$ and $\lambda$ satisfies those conditions in Theorem \ref{theo1.4}. In the second step, we solve the corresponding finite-dimensional problem to obtain a solution. However the estimate of $\varphi$ destroys the main terms in the expansions of $\frac{\partial F}{\partial \bar{r}}$ and $\frac{\partial F}{\partial \bar{y}''_j}$ (see Lemma \ref{lemB.3}). To overcome this difficulty, motivated by \cite{peng2018construction}, we will choose appropriate $(\bar{r},\bar{h},\bar{y}'')$ to satisfy the following local Pohozaev identities:
\begin{equation}\label{1.9}
  \begin{split}
    -\int_{\partial''{B_\rho^+}} t^{1-2s}\frac{\partial\tilde{u}}{\partial \nu}\frac{\partial\tilde{u}}{\partial y_i}+\frac{1}{2}\int_{\partial''{B_\rho^+}} t^{1-2s}|\nabla\tilde{u}|^2\nu_i
    =\int_{B_\rho}\big(-V(r,y'')u+(u)_+^{2_s^*-1}\big)\frac{\partial u}{\partial y_i}
  \end{split}
\end{equation}
for $i=4,\cdots,N$, and
\begin{equation}\label{1.10}
  \begin{split}
    &-\int_{\partial''{B_\rho^+}} t^{1-2s}\langle\nabla\tilde{u},Y\rangle\frac{\partial\tilde{u}}{\partial \nu}+\frac{1}{2}\int_{\partial''{B_\rho^+}}t^{1-2s}|\nabla\tilde{u}|^2\langle Y,\nu\rangle +\frac{2s-N}{2}\int_{\partial {B_\rho^+}}t^{1-2s}\frac{\partial\tilde{u}}{\partial \nu}\tilde{u}\\
    &=\int_{B_\rho}\big(-V(r,y'')u+(u)_+^{2_s^*-1}\big)\langle \nabla u,y\rangle,
  \end{split}
\end{equation}
where $u=Z_{\bar{r},\bar{h},\bar{y}'',\lambda}+\varphi_{\bar{r},\bar{h},\bar{y}'',\lambda}$, $\tilde{u}$ is the extension of $u$ (see below \eqref{3.A}) and
\begin{align*}
B_\rho&=\{y:|y-(r_0,y_0'')|\leq\rho\}\subseteq \R^N,\\[1mm]
B_\rho^+&=\{(y,t):|(y,t)-(r_0,y_0'',0)|\leq\rho,t>0\}\subseteq \R_+^{N+1},\\[1mm]
\partial'B_\rho^+&=\{(y,t):|y-(r_0,y_0'')|\leq\rho,t=0\}\subseteq \R^N,\\[1mm]
\partial''B_\rho^+&=\{(y,t):|(y,t)-(r_0,y_0'',0)|=\rho,t>0\}\subseteq \R_+^{N+1}.
\end{align*}
Moreover, $\partial B_\rho^+=\partial'B_\rho^+\cup\partial''B_\rho^+$.

For any $u \in D^{s,2}(\R^N)$, the s-harmonic extension $\tilde{u}$ is defined by
\begin{equation}\label{3.A}
  \tilde{u}(y,t)=\mathcal{P}_s[u]:=\int_{\R^N}P_s(y-\xi,t)u(\xi)d\xi, \quad (y,t)\in \R_+^{N+1}:=\R^N\times(0,\infty),
\end{equation}
where the Poisson kernel $P_s(x,t)$ is given by
\begin{equation*}
  P_s(x,t)=\beta(N,s)\frac{t^{2s}}{(|x|^2+t^2)^{\frac{N+2s}{2}}},
\end{equation*}
with constant $\beta(N,s)$ such that $\int_{\R^N}P_s(x,1)dx=1$. Then $\tilde{u} \in L^{2}(t^{1-2s},K)$ for any compact set $K$ in $\overline{\R_+^{N+1}}$, $\nabla\tilde{u} \in L^2(t^{1-2s},\R_+^{N+1})$ and $\tilde{u} \in C^\infty(\R_+^{N+1})$. Moreover, $\tilde{u}$ satisfies (see \cite{caffarelli2007extension})
\begin{equation}\label{3.B}
  \left\{
  \begin{array}{ll}
      \text{div}(t^{1-2s}\nabla\tilde{u})=0, &\quad \text{in}~ \R_+^{N+1},\vspace{0.2cm}\\
      \tilde{u}(y,0)=u(y),&\quad \text{on}~\R^N,\vspace{0.2cm}\\
    -\lim\limits_{t \to 0}t^{1-2s}\partial_t\tilde{u}(y,t)=\omega_s(-\Delta)^s u(y), &\quad \text{on}~\R^N,\vspace{0.2cm}\\
  \end{array}
\right.
\end{equation}
where $\omega_s=2^{1-2s}\Gamma(1-s)/\Gamma(s)$. It holds that
\begin{equation*}
  \|\tilde{u}\|_{L^2(t^{1-2s},\R^{N+1}_+)}=\omega_s\|u\|_{D^{s,2}(\R^N)}.
\end{equation*}
Without loss of generality, we may assume $\omega_s=1$.

We would like to point out that we obtain $(\bar{r},\bar{y}'')$ by using the reduction argument, rather than determine these parameters via computing the derivatives of the reduced function $F(\bar{r},\bar{h},\bar{y}'',\lambda)$ with respect to $\bar{r}$ and $\bar{y}''_k,k=4,\cdots,N$ directly.
We can still introduce the condition $\frac{\partial F}{\partial \bar{h}}=0$ to obtain
\begin{equation*}
  \bar{h}\sim O\Big(\frac{1}{k^{\frac{N-2s-1}{N-2s+1}}}\Big),
\end{equation*}
since $V(y)=V(|y'|,y'')$ is symmetric to the first three variables $y'=(y_1,y_2,y_3)$.

Noting that the maximum norm will not be affected by the number of the bubbles, we need to carry out the reduction procedure in a space with weighted maximum norm, similar weighted maximum norm has been used in \cite{del2003two,guo2020solutions,peng2019constructing,peng2018construction,wei2005arbitrary,wei2010infinitely}.

Our paper is organized as follows. In section 2, we perform a finite-dimensional reduction to get a finite dimensional setting. Then we prove some results for the finite dimensional problems and prove Theorem \ref{theo1.4} in section 3. In Appendix A, we give some essential estimates. The expansion of the energy for the approximate solutions are given in Appendix B and the proofs of the local Pohozaev identities for the fractional Laplacian will be proved in Appendix C.

\section{The finite-dimensional reduction}\label{sec2}

In this section, we perform a finite-\linebreak dimensional reduction by using $Z_{\bar{r},\bar{h},\bar{y}'',\lambda}$ as an approximation solution. For later calculations, we divide $\R^N$ into $k$ parts, for $j=1,\cdots,k$, \smallskip
\begin{equation*}
\begin{split}
  \Omega_j :=\Big\{y&=(y_1,y_2,y_3,y'')\in \R^3\times\R^{N-3}: \\
&\Big\langle \frac{(y_1,y_2)}{|(y_1,y_2)|},\big(\cos\frac{2(j-1)\pi}{k},\sin\frac{2(j-1)\pi}{k}\big)\Big\rangle_{\R^2}\geq\cos\frac{\pi}{k}\Big\},
\end{split}
\end{equation*}
where $\langle,\rangle_{\R^2}$ denote the dot product in $\R^2$. For $\Omega_j$, we further divide it into two separate parts:
$$\Omega_j^+=\big\{y:y=(y_1,y_2,y_3,y'') \in \Omega_j, y_3 \geq 0\big\},$$
$$\Omega_j^-=\big\{y:y=(y_1,y_2,y_3,y'') \in \Omega_j, y_3 < 0\big\}.$$
It is easy to verify that
\begin{equation*}
\R^N=\bigcup\limits_{j=1}^{k}\Omega_j,\quad \Omega_j=\Omega_j^+\cup\Omega_j^-,
\end{equation*}
and
\begin{equation*}
  \Omega_j \cap \Omega_i=\emptyset, \quad \Omega^+_j \cap \Omega^-_j=\emptyset, \quad \text{if}~i\neq j.
\end{equation*}

Let
\begin{equation*}
  \|u\|_*=\sup\limits_{y\in\R^N}\bigg(\sum\limits_{j=1}^{k}\Big(\frac{\lambda^{\frac{N-2s}{2}}}{(1+\lambda|y-x_j^+|)^{\frac{N-2s}{2}+\tau}}+\frac{\lambda^{\frac{N-2s}{2}}}{(1+\lambda|y-x_j^-|)^{\frac{N-2s}{2}+\tau}}\Big)\bigg)^{-1}|u(y)|,
\end{equation*}
and
\begin{equation*}
 \|f\|_{**}=\sup\limits_{y\in\R^N}\bigg(\sum\limits_{j=1}^{k}\Big(\frac{\lambda^{\frac{N+2s}{2}}}{(1+\lambda|y-x_j^+|)^{\frac{N+2s}{2}+\tau}}+\frac{\lambda^{\frac{N+2s}{2}}}{(1+\lambda|y-x_j^-|)^{\frac{N+2s}{2}+\tau}}\Big)\bigg)^{-1}|f(y)|,
\end{equation*}
where $\tau=\frac{N-4s}{2(N-2s)}$.

For $j=1,2,\cdots,k$, denote
\begin{equation*}
  Z_{j,1}^\pm=\frac{\partial Z_{x_j^\pm,\lambda}}{\partial\lambda},~ Z_{j,2}^\pm=\frac{\partial Z_{x_j^\pm,\lambda}}{\partial \bar{r}},~ Z_{j,3}^\pm=\frac{\partial Z_{x_j^\pm,\lambda}}{\partial \bar{h}},~ Z_{j,l}^\pm=\frac{\partial Z_{x_j^\pm,\lambda}}{\partial \bar{y}''_l},~ l=4,\cdots,N.
\end{equation*}

We also define the constrained space
\begin{equation*}
  \begin{split}
  \mathbb{H}=\Big\{v\in H_s:\int_{\R^N} Z_{x_j^+,\lambda}^{2_s^*-2}&Z_{j,l}^+ v=0 \quad \,\text{~and} \\
&\int_{\R^N} Z_{x_j^-,\lambda}^{2_s^*-2}Z_{j,l}^- v=0,~j=1,\cdots,k,~l=1,2,\cdots,N\Big\}.
  \end{split}
\end{equation*}

Now, we consider the following linearized problem
\begin{equation}\label{2.5}
  \begin{split}
  \left\{
  \begin{array}{ll}
    &(-\Delta)^s\varphi+V(r,y'')\varphi-(2_s^*-1)Z_{\bar{r},\bar{h},\bar{y}'',\lambda}^{2_s^*-2}\varphi\\
    &\quad=f+\sum\limits_{l=1}^N
c_l\sum\limits_{j=1}^k\Big(Z_{x_j^+,\lambda}^{2_s^*-2}Z_{j,l}^+ +Z_{x_j^-,\lambda}^{2_s^*-2}Z_{j,l}^-\Big), \quad\text{~in~} \R^N,\\
&\varphi \in \mathbb{H},
  \end{array}
\right.
  \end{split}
\end{equation}
for some constants $c_l$.

In the sequel of this section, we assume that $(\bar{r},\bar{y}'')$ and $\bar{h}$ satisfy \eqref{1.7}.

\begin{lemma}\label{lem2.1}
Suppose that $\varphi_k$ solves \eqref{2.5} for $f=f_k$. If $\|f_k\|_{**}$ goes to zero as $k$ goes to infinity, so does $\|\varphi_k\|_*$.
\end{lemma}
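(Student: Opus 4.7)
I would argue by contradiction and blow-up, following a scheme standard for weighted linearized problems with many bubbles. Suppose the conclusion fails; after passing to a subsequence, $\|f_k\|_{**}\to 0$ while $\|\varphi_k\|_*\ge c_0>0$, and I may rescale so that $\|\varphi_k\|_*=1$. The first step is to control the Lagrange multipliers $c_l$ in \eqref{2.5}. Testing the equation against $Z_{j',l'}^{\pm}$, the constraint $\varphi_k\in\mathbb{H}$ eliminates the $\varphi_k$-contribution from the self-adjoint part of the left-hand side, while the matrix $\bigl(\int Z_{x_j^{\pm},\lambda}^{2_s^*-2}Z_{j,l}^{\pm}Z_{j',l'}^{\pm'}\bigr)$ is almost block-diagonal (its off-diagonal entries coming from bubble interactions are small uniformly in $k$), so one can invert and obtain $|c_l|\le C\bigl(\|f_k\|_{**}+o_k(1)\bigr)$.

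Next I would convert \eqref{2.5} into an integral equation via the Green's function $G(y,z)$ of $(-\Delta)^s+V$, which for bounded $V\ge 0$ satisfies $G(y,z)\le C|y-z|^{-(N-2s)}$. This produces
\begin{equation*}
|\varphi_k(y)|\le C\int_{\R^N}\frac{1}{|y-z|^{N-2s}}\Bigl[Z_{\bar r,\bar h,\bar y'',\lambda}^{2_s^*-2}|\varphi_k|+|f_k|+\sum_{l,j,\pm}|c_l|Z_{x_j^{\pm},\lambda}^{2_s^*-2}|Z_{j,l}^{\pm}|\Bigr](z)\,dz.
\end{equation*}
The analytic core is the convolution estimate
\begin{equation*}
\int_{\R^N}\frac{1}{|y-z|^{N-2s}}\frac{\lambda^{(N+2s)/2}}{(1+\lambda|z-x_j^{\pm}|)^{(N+2s)/2+\tau}}\,dz\le C\frac{\lambda^{(N-2s)/2}}{(1+\lambda|y-x_j^{\pm}|)^{(N-2s)/2+\tau}},
\end{equation*}
together with its companion in which $|y-z|^{-(N-2s)}$ is convolved against $Z^{2_s^*-2}$ times the $\|\cdot\|_*$-weight. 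The exponent $\tau=(N-4s)/(2(N-2s))$ is chosen precisely so that, after summing over all $2k$ bubbles, the coefficient multiplying $\|\varphi_k\|_*$ on the right-hand side is strictly smaller than one for large $k$. This yields
\begin{equation*}
\|\varphi_k\|_*\le C\|f_k\|_{**}+o_k(1)\|\varphi_k\|_*+C\|\varphi_k\|_{L^\infty(D_k)},
\end{equation*}
where $D_k$ is a fixed-radius neighbourhood (in the rescaled variable $\lambda(y-x_j^{\pm})$) of the bubble centres.

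The final step eliminates the local $L^\infty$ term by blow-up. Setting $\tilde\varphi_k(y)=\lambda_k^{-(N-2s)/2}\varphi_k(x_1^{+}+\lambda_k^{-1}y)$, uniform boundedness on compacta together with $C^{\alpha}_{\mathrm{loc}}$-regularity for $(-\Delta)^s$ yields a subsequential limit $\varphi_\infty$ solving $(-\Delta)^s\varphi_\infty=(2_s^*-1)U_{0,1}^{2_s^*-2}\varphi_\infty$ on $\R^N$, bounded, and orthogonal to $Z_0,Z_1,\dots,Z_N$; the known non-degeneracy of $U_{0,1}$ then forces $\varphi_\infty\equiv 0$. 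Repeating the argument near every bubble centre gives $\|\varphi_k\|_{L^\infty(D_k)}\to 0$, and plugging back into the weighted bound contradicts $\|\varphi_k\|_*=1$. The principal obstacle is precisely the convolution estimate in the clustered regime: because $\bar h$ can be as small as $k^{-(N-2s-1)/(N-2s+1)}$, each $x_j^{+}$ is close to its reflection $x_j^{-}$, so the interaction between opposite bubbles does not decouple as in \cite{peng2018construction} and must be treated jointly with the sum over $j$; keeping this cross term subcritical is what ultimately drives the range restriction \eqref{s}.
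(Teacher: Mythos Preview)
Your strategy matches the paper's proof almost exactly: contradiction with $\|\varphi_k\|_*=1$, Green's function representation, convolution bounds on the three right-hand side contributions, estimation of the multipliers $c_l$, and a blow-up to the linearized limiting equation. Two points deserve correction, however.

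First, the orthogonality $\varphi_k\in\mathbb H$ does \emph{not} by itself eliminate the $\varphi_k$-contribution when you test \eqref{2.5} against $Z_{1,t}^{+}$. After integrating by parts you are left with $\int \varphi_k\bigl((-\Delta)^s Z_{1,t}^{+}+VZ_{1,t}^{+}-(2_s^*-1)Z_{\bar r,\bar h,\bar y'',\lambda}^{2_s^*-2}Z_{1,t}^{+}\bigr)$, and this vanishes only approximately: the presence of the cut-off $\eta$, of the potential $V$, and of the multi-bubble versus single-bubble linearization all produce genuine error terms. In the paper this is the content of \eqref{2.13}--\eqref{2.16}, which give the bound $O(\lambda^{n_t}\|\varphi_k\|_*/\lambda^{s+\epsilon})$; only then does one conclude $c_l=\lambda^{-n_l}\bigl(o(\|\varphi_k\|_*)+O(\|f_k\|_{**})\bigr)$. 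Your heuristic ``orthogonality kills it'' skips this step.

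Second, the range restriction \eqref{s} plays no role whatsoever in Lemma~\ref{lem2.1}; it enters only later, in the estimate of $\|l_k\|_{**}$ (Lemma~\ref{lem2.4}) and in preserving the structure of $\partial F/\partial\bar h$ (Remark~\ref{rema1.3.1}). The cross-interaction between $x_j^{+}$ and $x_j^{-}$ is handled here simply via Lemma~\ref{lemA.3}, which gains a small extra decay $\theta$ in the weight and is insensitive to \eqref{s}. The mechanism you describe---that the coefficient of $\|\varphi_k\|_*$ becomes $o_k(1)$---is really this decay gain (see \eqref{2.7} and \eqref{2.17}), not a direct consequence of the particular value of $\tau$.
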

\begin{proof}
We argue by contradiction. Suppose that there exist $k \to \infty$, $f=f_k$, $\lambda=\lambda_k \in [L_0 k^{\frac{N-2s}{N-4s}},L_1k^{\frac{N-2s}{N-4s}}]$, $(\bar{r}_k,\bar{h}_k,\bar{y}''_k)$ satisfies \eqref{1.7}, $\varphi_k$ solving \eqref{2.5} for $f=f_k$, $\lambda=\lambda_k$, $\bar{r}=\bar{r}_k$, $\bar{h}=\bar{h}_k$, $\bar{y}''=\bar{y}''_k$ with $\|f_k\|_{**}\to 0$ and $\|\varphi_k\|_{*}\geq c> 0$. Without loss of generality, we may assume that $\|\varphi_k\|_{*}=1$. For simplicity, we drop the subscript $k$.

From equation \eqref{2.5}, we have
\begin{equation}\label{2.6}
\begin{split}
  |\varphi(y)|&\leq C\int_{\R^N}\frac{1}{|z-y|^{N-2s}}Z_{\bar{r},\bar{h},\bar{y}'',\lambda}^{2_s^*-2}|\varphi|dz +C\int_{\R^N}\frac{1}{|z-y|^{N-2s}}|f|dz \\
  &\quad +C\int_{\R^N}\frac{1}{|z-y|^{N-2s}}\Big|\sum\limits_{l=1}^N
c_l\sum\limits_{j=1}^k(Z_{x_j^+,\lambda}^{2_s^*-2}Z_{j,l}^+ +Z_{x_j^-,\lambda}^{2_s^*-2}Z_{j,l}^-)\Big|dz. \\
 \end{split}
\end{equation}

Using Lemma \ref{lemA.3}, we can deduce that
\begin{equation}\label{2.7}
\begin{split}
  &\int_{\R^N}\frac{1}{|z-y|^{N-2s}}Z_{\bar{r},\bar{h},\bar{y}'',\lambda}^{2_s^*-2}|\varphi|dz\\
   & \leq \|\varphi\|_*\int_{\R^N}\frac{Z_{\bar{r},\bar{h},\bar{y}'',\lambda}^{2_s^*-2}}{|z-y|^{N-2s}}\sum\limits_{j=1}^{k}\Big(\frac{\lambda^{\frac{N-2s}{2}}}{(1+\lambda|z-x_j^+|)^{\frac{N-2s}{2}+\tau}}+\frac{\lambda^{\frac{N-2s}{2}}}{(1+\lambda|z-x_j^-|)^{\frac{N-2s}{2}+\tau}}\Big)\\
   & \leq C\|\varphi\|_*\sum\limits_{j=1}^{k}\Big(\frac{\lambda^{\frac{N-2s}{2}}}{(1+\lambda|y-x_j^+|)^{\frac{N-2s}{2}+\tau+\theta}}+\frac{\lambda^{\frac{N-2s}{2}}}{(1+\lambda|y-x_j^-|)^{\frac{N-2s}{2}+\tau+\theta}}\Big),
 \end{split}
\end{equation}
where $\theta$ is a small constant.

It follows from Lemma \ref{lemA.2} that
\begin{equation}\label{2.8}
\begin{split}
&\int_{\R^N}\frac{1}{|z-y|^{N-2s}}|f|dz\\
 & \leq \|f\|_{**}\int_{\R^N}\frac{1}{|z-y|^{N-2s}}\sum\limits_{j=1}^{k}\Big(\frac{\lambda^{\frac{N+2s}{2}}}{(1+\lambda|z-x_j^+|)^{\frac{N+2s}{2}+\tau}}+\frac{\lambda^{\frac{N+2s}{2}}}{(1+\lambda|z-x_j^-|)^{\frac{N+2s}{2}+\tau}}\Big)\\
   & \leq C\|f\|_{**}\sum\limits_{j=1}^{k}\Big(\frac{\lambda^{\frac{N-2s}{2}}}{(1+\lambda|y-x_j^+|)^{\frac{N-2s}{2}+\tau}}+\frac{\lambda^{\frac{N-2s}{2}}}{(1+\lambda|y-x_j^-|)^{\frac{N-2s}{2}+\tau}}\Big).
 \end{split}
\end{equation}
From the definitions of $Z_{j,l}^\pm$, for $j=1,2,\cdots,k$, we have
\begin{equation}\label{2.9}
  |Z_{j,1}^\pm|\leq C\lambda^{-1}Z_{x_j^{\pm},\lambda},~|Z_{j,l}^\pm|\leq C\lambda Z_{x_j^{\pm},\lambda},~ l=2,\cdots,N.
\end{equation}
Combining estimates \eqref{2.9} and Lemma \ref{lemA.2}, we have
\begin{equation}\label{2.10}
\begin{split}
 &\int_{\R^N}\frac{1}{|z-y|^{N-2s}}\Big|\sum\limits_{j=1}^k(Z_{x_j^+,\lambda}^{2_s^*-2}Z_{j,l}^+ +Z_{x_j^-,\lambda}^{2_s^*-2}Z_{j,l}^-)\Big| dz\\
 & \leq C \lambda^{n_l} \int_{\R^N}\frac{1}{|z-y|^{N-2s}}\sum\limits_{j=1}^k\Big(\frac{\lambda^{\frac{N+2s}{2}}}{(1+\lambda|z-x_j^+|)^{N+2s}}+\frac{\lambda^{\frac{N+2s}{2}}}{(1+\lambda|z-x_j^-|)^{N+2s}}\Big)\\
 & \leq C \lambda^{n_l}\sum\limits_{j=1}^k\Big(\frac{\lambda^{\frac{N-2s}{2}}}{(1+\lambda|y-x_j^+|)^{\frac{N-2s}{2}+\tau}}+\frac{\lambda^{\frac{N-2s}{2}}}{(1+\lambda|y-x_j^-|)^{\frac{N-2s}{2}+\tau}}\Big),
 \end{split}
\end{equation}
where $n_1=-1$, $n_l=1$ for $l=2,\cdots,N$.

Next, we want to estimate $c_l$, $l=1,2,\cdots,N$. Multiplying equation \eqref{2.5} by $Z_{1,t}^+ (t=1,\cdots,N)$ and integrating, we see that $c_l$ satisfies
\begin{equation}\label{2.11}
\begin{split}
&\sum\limits_{l=1}^N c_l\sum\limits_{j=1}^k\int_{\R^N}\big(Z_{x_j^+,\lambda}^{2_s^*-2}Z_{j,l}^+ +Z_{x_j^-,\lambda}^{2_s^*-2}Z_{j,l}^-\big)Z_{1,t}^+\\
&=\big\langle(-\Delta)^s\varphi+V(r,y'')\varphi-(2_s^*-1)Z_{\bar{r},\bar{h},\bar{y}'',\lambda}^{2_s^*-2}\varphi,Z_{1,t}^+\big\rangle -\big\langle f,Z_{1,t}^+\big\rangle.
 \end{split}
\end{equation}
It is easy to check that
\begin{equation}\label{2.12}
  \sum\limits_{j=1}^k\int_{\R^N}\big(Z_{x_j^+,\lambda}^{2_s^*-2}Z_{j,l}^+ +Z_{x_j^-,\lambda}^{2_s^*-2}Z_{j,l}^-\big)Z_{1,t}^+
  \left\{
 \begin{array}{ll}
=(\bar{c}+o(1))\lambda^{2n_t}, & \hbox{$l=t$;}\vspace{0.15cm} \\
\leq\frac{\bar{c}\lambda^{n_l+n_t}}{\lambda^N}, & \hbox{$l\neq t$,}
\end{array}
\right.
\end{equation}
for some constant $\bar{c}>0$.

It follows from Lemma \ref{lemA.1} and \eqref{2.9} that
\begin{equation}\label{2.14}
\begin{split}
  &|\big\langle V(r,y'')\varphi,Z_{1,t}^+\big\rangle| \\
  &\leq C \lambda^{n_t}\|\varphi\|_*\int_{\R^N}\frac{\eta\lambda^{\frac{N-2s}{2}}}{(1+\lambda|y-x_1^+|)^{N-2s}} \\
  &\quad \times\sum\limits_{j=1}^{k}\bigg(\frac{\lambda^{\frac{N-2s}{2}}}{(1+\lambda|y-x_j^+|)^{\frac{N-2s}{2}+\tau}}+\frac{\lambda^{\frac{N-2s}{2}}}{(1+\lambda|y-x_j^-|)^{\frac{N-2s}{2}+\tau}}\bigg)\\
  &=C\lambda^{n_t}\|\varphi\|_* \int_{\R^N}\eta\bigg(\frac{\lambda^{N-2s}}{(1+\lambda|y-x_1^+|)^{\frac{3}{2}(N-2s)+\tau}}+\sum\limits_{j=2}^{k}\frac{\lambda^{N-2s}}{(1+\lambda|y-x_1^+|)^{N-2s}}\\
  &\quad \times\frac{1}{(1+\lambda|y-x_j^+|)^{\frac{N-2s}{2}+\tau}}
 +\sum\limits_{j=1}^{k}\frac{\lambda^{N-2s}}{(1+\lambda|y-x_1^+|)^{N-2s}}\frac{1}{(1+\lambda|y-x_j^-|)^{\frac{N-2s}{2}+\tau}}\bigg)\\
  &\leq C \lambda^{n_t}\|\varphi\|_* \bigg(\int_{\R^N}\frac{\eta\lambda^{N-2s}}{(1+\lambda|y-x_1^+|)^{\frac{3}{2}(N-2s)+\tau}}\\
  &\quad +\sum\limits_{j=2}^{k}\frac{1}{(\lambda|x_1^+-x_j^+|)^{\tau}}\int_{\R^N}\Big(\frac{\eta\lambda^{N-2s}}{(1+\lambda|y-x_1^+|)^{\frac{3}{2}(N-2s)}}+\frac{\eta\lambda^{N-2s}}{(1+\lambda|y-x_j^+|)^{\frac{3}{2}(N-2s)}}\Big)\\
  &\quad +\sum\limits_{j=1}^{k}\frac{1}{(\lambda|x_1^+-x_j^-|)^{\tau}}\int_{\R^N}\Big(\frac{\eta\lambda^{N-2s}}{(1+\lambda|y-x_1^+|)^{\frac{3}{2}(N-2s)}}+\frac{\eta\lambda^{N-2s}}{(1+\lambda|y-x_j^-|)^{\frac{3}{2}(N-2s)}}\Big)\bigg)\\
  &\leq C\|\varphi\|_* \int_{\R^N}\frac{\eta\lambda^{N-2s+n_t+\tau}}{(1+\lambda|y-x_1^+|)^{\frac{3}{2}(N-2s)}}\\
  &\leq \frac{C\lambda^{n_t}\|\varphi\|_*}{\lambda^{\min\{2s-\tau,\frac{N-2s}{2}-\tau\}}},
 \end{split}
\end{equation}
where we used the fact that, for $\gamma=\tau=\frac{N-4s}{2(N-2s)}<1$ in Lemma \ref{lemA.4} and $\bar{h}$ satisfying \eqref{1.7},
\begin{equation}\label{a2.14}
\sum\limits_{j=2}^{k}\frac{1}{(\lambda|x_1^+-x_j^+|)^{\tau}}
\leq\frac{Ck}{\lambda^{\tau}}\leq C\lambda^{\tau},
\end{equation}
and
\begin{equation}\label{a2.14.1}
\sum\limits_{j=1}^{k}\frac{1}{(\lambda|x_1^+-x_j^-|)^{\tau}}
\leq\frac{Ck}{\lambda^{\tau}}+\frac{C}{(\lambda\bar{h})^\tau}\leq C\lambda^{\tau}.
\end{equation}

Similarly, we have
\begin{equation}\label{2.15}
\begin{split}
&|\langle f,Z_{1,t}^+\rangle|\\
&\leq C\|f\|_{**}\int_{\R^N}\frac{\eta\lambda^{\frac{N-2s}{2}+n_t}}{(1+\lambda|z-x_1^+|)^{N-2s}}\\
&\quad \times\sum\limits_{j=1}^{k}\bigg(\frac{\lambda^{\frac{N+2s}{2}}}{(1+\lambda|z-x_j^+|)^{\frac{N+2s}{2}+\tau}}+\frac{\lambda^{\frac{N+2s}{2}}}{(1+\lambda|z-x_j^-|)^{\frac{N+2s}{2}+\tau}}\bigg)\\
&\leq C\lambda^{n_t}\|f\|_{**}\bigg(C'+\sum\limits_{j=2}^k\frac{1}{(\lambda|x_1^+-x_j^+|)^{\frac{N-2s}{2}+\tau-\theta_0}} + \sum\limits_{j=1}^k\frac{1}{(\lambda|x_1^+-x_j^-|)^{\frac{N-2s}{2}+\tau-\theta_0}}\bigg)\\
&\leq C \lambda^{n_t}\|f\|_{**},
 \end{split}
\end{equation}
where $\theta_0>0$ is a small constant and we used the fact that, for $\frac{N-4s}{N-2s}<\gamma<1$ in Lemma \ref{lemA.4} and $\bar{h}$ satisfying \eqref{1.7},
\begin{equation}\label{a02.14}
\sum\limits_{j=2}^{k}\frac{1}{(\lambda|x_1^+-x_j^+|)^{\gamma}}
\leq\frac{Ck}{\lambda^{\gamma}}\leq C,
\end{equation}
and
\begin{equation}\label{a02.14.1}
\sum\limits_{j=1}^{k}\frac{1}{(\lambda|x_1^+-x_j^-|)^{\gamma}}
\leq\frac{Ck}{\lambda^{\gamma}}+\frac{C}{(\lambda\bar{h})^\gamma}\leq C.
\end{equation}
On the other hand, direct calculation gives
\begin{equation}\label{2.13}
  \big\langle(-\Delta)^s\varphi-(2_s^*-1)Z_{\bar{r},\bar{h},\bar{y}'',\lambda}^{2_s^*-2}\varphi,Z_{1,t}^+\big\rangle =O\Big(\frac{\lambda^{n_t}\|\varphi\|_*}{\lambda^{s+\epsilon}}\Big).
\end{equation}
Hence,
\begin{equation}\label{2.16}
 \big\langle(-\Delta)^s\varphi+V(r,y'')\varphi-(2_s^*-1)Z_{\bar{r},\bar{h},\bar{y}'',\lambda}^{2_s^*-2}\varphi,Z_{1,t}^+\big\rangle -\big\langle f,Z_{1,t}^+\big\rangle = O\bigg(\lambda^{n_t}\Big(\frac{\|\varphi\|_*}{\lambda^{s+\epsilon}}+\|f\|_{**}\Big)\bigg),
\end{equation}
which, together with \eqref{2.11} and \eqref{2.12}, yields that
\begin{equation*}
  c_l=\frac{1}{\lambda^{n_l}}\big(o(\|\varphi\|_*)+O(\|f\|_{**})\big).
\end{equation*}
So,
\begin{equation}\label{2.17}
  \|\varphi\|_* \leq C \Bigg(\|f\|_{**}+\frac{\sum\limits_{j=1}^k\big(\frac{1}{(1+\lambda|y-x_j^+|)^{\frac{N-2s}{2}+\tau+\theta}}+\frac{1}{(1+\lambda|y-x_j^-|)^{\frac{N-2s}{2}+\tau+\theta}}\big)}{\sum\limits_{j=1}^k\big(\frac{1}{(1+\lambda|y-x_j^+|)^{\frac{N-2s}{2}+\tau}}+\frac{1}{(1+\lambda|y-x_j^-|)^{\frac{N-2s}{2}+\tau}}\big)}\Bigg),
\end{equation}
which, together with $\|\varphi\|_*=1$, yields that there is $R>0$ such that
\begin{equation}\label{2.18}
  \|\lambda^{-\frac{N-2s}{2}}\varphi(y)\|_{L^\infty(B_{R/\lambda}(x_j^*))}\geq a>0,
\end{equation}
for some $j$ with $x_j^*=x_j^+$ or $x_j^-$. But $\tilde{\varphi}(y)=\lambda^{-\frac{N-2s}{2}}\varphi(\lambda^{-1}y+x_j^*)$ converges uniformly in any compact set to a solution $u$ of
\begin{equation}\label{2.19}
  (-\Delta)^s u -(2_s^*-1)U_{0,\Lambda}^{2_s^*-2}u=0, \quad \text{in}~\R^N,
\end{equation}
for some $\Lambda \in [\Lambda_1,\Lambda_2]$ and $u$ is perpendicular to the kernel of \eqref{2.19}. As a consequence, $u=0$, which is a contradiction to \eqref{2.18}.
\end{proof}

From Lemma \ref{lem2.1}, using the same argument as in the proof of Proposition 4.1 in \cite{del2003two}, we can prove the following result.

\begin{lemma}\label{lem2.2}
There exist $k_0>0$ and a constant $C>0$, independent of $k$, such that for $k\geq k_0$ and all $f \in L^\infty(\R^N)$, problem \eqref{2.5} has a unique solution $\varphi=\mathcal{L}_k(f)$. Moreover,
\begin{equation}\label{2.20}
  \|\mathcal{L}_k(f)\|_* \leq C \|f\|_{**}, \quad |c_l|\leq \frac{C}{\lambda^{n_l}}\|f\|_{**},
\end{equation}
where $n_1=-1$, $n_l=1$ for $l=2,\cdots,N$.
\end{lemma}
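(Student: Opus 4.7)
The plan is to combine the Fredholm alternative in a Hilbert-space setting with the a priori estimate of Lemma \ref{lem2.1}, following the scheme of Proposition 4.1 in \cite{del2003two}. First I would recast \eqref{2.5} as a compact perturbation of the identity on the closed subspace $\mathbb{H}\subset H^s(\R^N)$ endowed with the inner product induced by $\int |(-\Delta)^{s/2}u|^2 + \int V\, uv$. Via Riesz representation, the linearized operator becomes $\varphi \mapsto \varphi - K\varphi = T(f)$, where $K$ is defined by $\langle K\varphi,\psi\rangle = (2_s^*-1)\int Z_{\bar{r},\bar{h},\bar{y}'',\lambda}^{2_s^*-2}\varphi\,\psi$ and the Lagrange multipliers $c_l$ arise automatically from projecting $f$ onto the orthogonal complement in $\mathbb{H}$ of the span of the $Z_{x_j^\pm,\lambda}^{2_s^*-2}Z_{j,l}^\pm$. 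Compactness of $K$ follows from the local compact embedding $H^s \hookrightarrow L^p$ together with the fact that $Z_{\bar{r},\bar{h},\bar{y}'',\lambda}^{2_s^*-2}\in L^{N/(2s)}(\R^N)$ uniformly in the parameters, so $I-K$ is Fredholm of index zero on $\mathbb{H}$.

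Second, injectivity of $I-K$ on $\mathbb{H}$ is an immediate consequence of Lemma \ref{lem2.1} applied with $f_k\equiv 0$: any nontrivial element of the kernel, once upgraded to a function with finite $\|\cdot\|_*$-norm via the representation formula \eqref{2.6} and Lemma \ref{lemA.3} (standard bootstrap), would contradict the conclusion $\|\varphi_k\|_*\to 0$ of Lemma \ref{lem2.1}. Fredholm then yields surjectivity, so for each $f\in L^\infty(\R^N)$ the problem \eqref{2.5} has a unique solution $\varphi=\mathcal{L}_k(f)\in\mathbb{H}$ together with uniquely determined multipliers $c_l$.

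Third, the quantitative bounds are essentially a rereading of the computations already performed in the proof of Lemma \ref{lem2.1}. The estimate $\|\mathcal{L}_k(f)\|_*\leq C\|f\|_{**}$ follows by the same contradiction argument as in Lemma \ref{lem2.1}: if it failed, one could extract a sequence with $\|\varphi_k\|_*=1$ and $\|f_k\|_{**}\to 0$, which is exactly what Lemma \ref{lem2.1} rules out. For the multipliers, testing \eqref{2.5} against $Z_{1,t}^+$ yields the linear system \eqref{2.11}, whose coefficient matrix is diagonally dominant with diagonal entries $(\bar{c}+o(1))\lambda^{2n_t}$ by \eqref{2.12}, while the right-hand side is controlled by \eqref{2.16} as $O\bigl(\lambda^{n_t}(\lambda^{-s-\epsilon}\|\varphi\|_*+\|f\|_{**})\bigr)$. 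Solving the system and using $\|\varphi\|_*\leq C\|f\|_{**}$ gives $|c_l|\leq C\lambda^{-n_l}\|f\|_{**}$.

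The only non-routine point is the regularity bridge between the Hilbert-space solution produced by Fredholm and the weighted pointwise framework in which Lemma \ref{lem2.1} is stated: one must check that the Fredholm solution in $\mathbb{H}$ has finite $\|\cdot\|_*$-norm. This is achieved by iterating the integral representation \eqref{2.6} with an a priori $L^\infty_{\mathrm{loc}}$-bound obtained from standard fractional elliptic regularity, and then invoking Lemma \ref{lemA.3} once to promote the pointwise bound to the weighted form encoded by $\|\cdot\|_*$. Once this is in place, the conclusions of the lemma follow as outlined.
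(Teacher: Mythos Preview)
Your proposal is correct and follows essentially the same approach as the paper, which does not give a detailed proof but simply states that the result follows from Lemma \ref{lem2.1} by the same argument as in the proof of Proposition 4.1 in \cite{del2003two}. Your sketch of the Fredholm alternative combined with the a priori estimate, and your derivation of the multiplier bounds from \eqref{2.11}--\eqref{2.16}, are precisely the standard implementation of that reference.
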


Next, we consider the following problem
\begin{equation}\label{2.21}
\left\{
  \begin{array}{ll}
    \begin{aligned}
    &(-\Delta)^s(Z_{\bar{r},\bar{h},\bar{y}'',\lambda}+\varphi)+V(r,y'')(Z_{\bar{r},\bar{h},\bar{y}'',\lambda}+\varphi) \\
    &= (Z_{\bar{r},\bar{h},\bar{y}'',\lambda}+\varphi)_+^{2_s^*-1}+\sum\limits_{l=1}^N c_l\sum\limits_{j=1}^k\Big(Z_{x_j^+,\lambda}^{2_s^*-2}Z_{j,l}^+ +Z_{x_j^-,\lambda}^{2_s^*-2}Z_{j,l}^-\Big),\quad \text{in~} \R^N,
  \end{aligned}\\
  \varphi \in \mathbb{H}.
  \end{array}
\right.
\end{equation}

First, we give the main result of this section.
\begin{proposition}\label{prop2.5}
There exists a positive large integer $k_0$, such that for all $k\geq k_0$ and $\lambda \in [L_0 k^{\frac{N-2s}{N-4s}},L_1 k^{\frac{N-2s}{N-4s}}]$, $(\bar{r},\bar{h},\bar{y}'')$ satisfies \eqref{1.7}, problem \eqref{2.21} has a unique solution $\varphi=\varphi_{\bar{r},\bar{h},\bar{y}'',\lambda}$ satisfying
\begin{equation}\label{2.28}
  \|\varphi\|_{*} \leq \frac{C}{\lambda^{{\frac{2s+1}{2}}+\epsilon}}, \quad |c_l|\leq \frac{C}{\lambda^{{\frac{2s+1}{2}}+n_l+\epsilon}},
\end{equation}
where $\epsilon>0$ is a small constant.
\end{proposition}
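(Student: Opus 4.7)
The plan is to recast problem \eqref{2.21} as a fixed-point equation and apply a contraction mapping argument based on Lemma \ref{lem2.2}. Writing $Z=Z_{\bar{r},\bar{h},\bar{y}'',\lambda}$, define the error term and the quadratic remainder
\begin{equation*}
l_k := Z^{2_s^*-1} - (-\Delta)^s Z - V(r,y'')Z, \qquad N(\varphi) := (Z+\varphi)_+^{2_s^*-1} - Z^{2_s^*-1} - (2_s^*-1)Z^{2_s^*-2}\varphi.
\end{equation*}
Then \eqref{2.21} is equivalent to the linearized problem \eqref{2.5} with $f=l_k+N(\varphi)$, which by Lemma \ref{lem2.2} admits a unique solution expressed as $\varphi=\mathcal{L}_k(l_k+N(\varphi))$. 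Thus I would set $A(\varphi):=\mathcal{L}_k(l_k+N(\varphi))$ and look for a fixed point of $A$ in the ball
\begin{equation*}
\mathcal{B} := \Big\{\varphi\in\mathbb{H}\,:\,\|\varphi\|_{*}\leq \lambda^{-\frac{2s+1}{2}-\epsilon}\Big\}.
\end{equation*}

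The crucial step, and the one where the new parameter $\bar{h}$ forces a more careful analysis than in \cite{guo2020solutions}, is the bound $\|l_k\|_{**}\leq C\lambda^{-\frac{2s+1}{2}-\epsilon}$. I would split $l_k$ into three pieces: the potential contribution $VZ$, the non-local ``cut-off commutator'' $(-\Delta)^s(\eta U_{x_j^\pm,\lambda})-\eta(-\Delta)^sU_{x_j^\pm,\lambda}=\eta U_{x_j^\pm,\lambda}^{2_s^*-1}-(-\Delta)^s(\eta U_{x_j^\pm,\lambda})$, and the bubble-interaction piece $(\sum_j\eta U_{x_j^\pm,\lambda})^{2_s^*-1}-\sum_j(\eta U_{x_j^\pm,\lambda})^{2_s^*-1}$. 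The potential term, Taylor expanded around $(\bar r,\bar y'')$, is controlled via the change of variables $y\mapsto x_j^\pm+\lambda^{-1}y$ after using that $\nabla(r^{2s}V)(r_0,y_0'')$ is small. The non-local truncation term is controlled by the decay of $U_{x_j^\pm,\lambda}$ outside $\{\eta\equiv 1\}$ together with \eqref{1.2}. The hard part will be the interaction piece, since the ``top'' and ``bottom'' bubbles $x_j^+$ and $x_j^-$ are separated only by a distance $\sim 2\bar r\bar h$ which is much smaller than the inter-bubble distance on each circle; the required rate must nevertheless emerge from the sums $\sum_j(\lambda|x_1^+-x_j^\pm|)^{-\gamma}$, estimated exactly as in \eqref{a2.14}--\eqref{a02.14.1} with $\gamma$ slightly larger than $(N-4s)/(N-2s)$, together with the assumption \eqref{s} on $(N,s)$, which is precisely what gives room for the exponent $(2s+1)/2+\epsilon$.

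For the nonlinear piece, I would use the pointwise inequality $|N(\varphi)|\leq C(Z^{2_s^*-3}\varphi^2+|\varphi|^{2_s^*-1})$ (valid since $2_s^*-1\leq 2$ in our range) and mirror the computations \eqref{2.7}--\eqref{2.8}, together with an application of Lemma \ref{lemA.4} with a convenient exponent, to obtain
\begin{equation*}
\|N(\varphi)\|_{**}\leq C\|\varphi\|_{*}^{\min\{2,\,2_s^*-1\}},\qquad \|N(\varphi_1)-N(\varphi_2)\|_{**}\leq C(\|\varphi_1\|_*+\|\varphi_2\|_*)^{\delta}\|\varphi_1-\varphi_2\|_*,
\end{equation*}
for some $\delta>0$. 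Combining with Lemma \ref{lem2.2} gives $\|A(\varphi)\|_*\leq C(\|l_k\|_{**}+\|N(\varphi)\|_{**})$ and a similar contraction bound on $A(\varphi_1)-A(\varphi_2)$. For $k$ large and $\epsilon>0$ chosen according to Remark \ref{rema1.3.1}, both estimates keep $A$ mapping $\mathcal{B}$ into itself and strictly contracting. Banach's fixed point theorem then yields the unique $\varphi=\varphi_{\bar{r},\bar{h},\bar{y}'',\lambda}\in\mathcal{B}$ satisfying the first estimate in \eqref{2.28}.

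Finally, the bound on the multipliers $c_l$ follows immediately from the second estimate of Lemma \ref{lem2.2} applied with $f=l_k+N(\varphi)$, once we use $\|l_k+N(\varphi)\|_{**}\leq C\lambda^{-(2s+1)/2-\epsilon}$ just established. The principal difficulty therefore concentrates entirely in the interaction estimate for $l_k$, where the smallness of $\bar{h}$, the non-locality of $(-\Delta)^s$, and the relatively weak decay of $U_{x_j^\pm,\lambda}$ (since $N$ is small) must be balanced against the assumption \eqref{s} to yield an error exponent strictly larger than $(2s+1)/2$.
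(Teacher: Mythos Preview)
Your proposal is correct and follows the paper's approach exactly: the fixed-point reformulation via Lemma~\ref{lem2.2}, the three-piece splitting of $l_k$ (interaction, potential, cut-off commutator --- the paper's $J_1,J_2,J_3$ in Lemma~\ref{lem2.4}), the nonlinear bound of Lemma~\ref{lem2.3}, and the contraction on a ball of radius $\sim\lambda^{-(2s+1)/2}$. Two minor slips to correct: the pointwise inequality you quote for $N(\varphi)$ is the one valid for $2_s^*>3$, not $2_s^*-1\le 2$ (both regimes occur under \eqref{s}, so handle them separately as the paper does), and the bound $\|N(\varphi)\|_{**}\le C\|\varphi\|_*^{\min\{2,\,2_s^*-1\}}$ actually carries an extra growing factor $\lambda^{\frac{4s}{N-2s}\tau}$ (see Lemma~\ref{lem2.3}) --- harmless for the contraction, but present.
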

In order to prove Proposition \ref{prop2.5}, we need several lemmas.

Rewrite \eqref{2.21} as
\begin{equation}\label{2.22}
  \left\{
  \begin{array}{ll}
  \begin{aligned}
    &(-\Delta)^s\varphi + V(r,y'')\varphi- (2_s^*-1)Z_{\bar{r},\bar{h},\bar{y}'',\lambda}^{2_s^*-2}\varphi\\
    &= \mathcal{F}(\varphi)+l_k+\sum\limits_{l=1}^N c_l\sum\limits_{j=1}^k\Big(Z_{x_j^+,\lambda}^{2_s^*-2}Z_{j,l}^+ +Z_{x_j^-,\lambda}^{2_s^*-2}Z_{j,l}^-\Big),\quad \text{in~} \R^N,
  \end{aligned}\\
\varphi \in \mathbb{H},
  \end{array}
\right.
\end{equation}
where
\begin{equation}\label{2.23}
  \mathcal{F}(\varphi)=(Z_{\bar{r},\bar{h},\bar{y}'',\lambda}+\varphi)_+^{2_s^*-1}-Z_{\bar{r},\bar{h},\bar{y}'',\lambda}^{2_s^*-1}-(2_s^*-1)Z_{\bar{r},\bar{h},\bar{y}'',\lambda}^{2_s^*-2}\varphi,
\end{equation}
and
\begin{equation}\label{2.24}
  \begin{split}
  l_k(x)&=\Big(Z_{\bar{r},\bar{h},\bar{y}'',\lambda}^{2_s^*-1}-\sum\limits_{j=1}^{k} \big(\eta U_{x_j^+,\lambda}^{2_s^*-1} +\eta U_{x_j^-,\lambda}^{2_s^*-1}\big)\Big)-V(r,y'')Z_{\bar{r},\bar{h},\bar{y}'',\lambda} -\sum\limits_{j=1}^{k}c(N,s)\\
        &\quad \times\lim\limits_{\varepsilon \to 0^+} \int_{\R^N \backslash B_\varepsilon(x)} \bigg(\frac{\big(\eta(x)-\eta(y)\big)U_{x_j^+,\lambda}(y)}{|x-y|^{N+2s}}+\frac{\big(\eta(x)-\eta(y)\big)U_{x_j^-,\lambda}(y)}{|x-y|^{N+2s}}\bigg) dy\\
        &=:J_1+J_2+J_3.
  \end{split}
\end{equation}

In order to use the contraction mapping theorem to prove that \eqref{2.22} is uniquely solvable, we need to estimate $\mathcal{F}(\varphi)$ and $l_k$ respectively.

\begin{lemma}\label{lem2.3}
If $N>4s+1$, then
\begin{equation*}
  \|\mathcal{F}(\varphi)\|_{**}\leq C\lambda^{\frac{4s}{N-2s}\tau} \|\varphi\|_{*}^{\min\{2_s^*-1,2\}}.
\end{equation*}
\end{lemma}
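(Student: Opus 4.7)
The plan is three-stage: a pointwise bound on $\mathcal{F}(\varphi)$ via Taylor expansion around $Z:=Z_{\bar r,\bar h,\bar y'',\lambda}$, substitution of the weighted bound $|\varphi(y)|\le\|\varphi\|_* W_1(y)$ with $W_1(y):=\sum_{j,\pm}\lambda^{(N-2s)/2}(1+\lambda|y-x_j^\pm|)^{-((N-2s)/2+\tau)}$, and finally a sharp estimate of the pointwise ratio against the $\|\cdot\|_{**}$-weight $W_2(y):=\sum_{j,\pm}\lambda^{(N+2s)/2}(1+\lambda|y-x_j^\pm|)^{-((N+2s)/2+\tau)}$.

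For the pointwise bound I would split according to whether $2_s^*-1\le 2$ (i.e., $N\ge 6s$) or $2_s^*-1>2$ (i.e., $N<6s$). In the first case the H\"older continuity of $t\mapsto t_+^{2_s^*-2}$ with exponent $2_s^*-2\in(0,1]$ gives $|\mathcal F(\varphi)|\le C|\varphi|^{2_s^*-1}$ directly from the mean-value theorem. In the second case a second-order expansion gives $|\mathcal F(\varphi)|\le C\bigl(Z^{2_s^*-3}|\varphi|^2+|\varphi|^{2_s^*-1}\bigr)$; since $\|\varphi\|_*$ is small the second term will be subsumed into the first after the weighted substitution, which is why the exponent on $\|\varphi\|_*$ in the conclusion is $\min\{2_s^*-1,2\}$. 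In the substitution step, the algebraic identity $(2_s^*-1)\cdot\tfrac{N-2s}{2}=\tfrac{N+2s}{2}$ aligns the $\lambda^{(N+2s)/2}$ in $W_1^{2_s^*-1}$ with the one in $W_2$; in Case B one also distributes $Z^{2_s^*-3}\le C\sum_{j,\pm}U_{x_j^\pm,\lambda}^{2_s^*-3}$ by subadditivity of $t\mapsto t^q$ for $q=2_s^*-3\in(0,1]$, reducing everything to the same type of ratio problem.

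What remains, and what I expect to be the main obstacle, is the pointwise ratio estimate $\sup_y W_1(y)^p/W_2(y)\le C\lambda^{4s\tau/(N-2s)}$ with $p=2_s^*-1$. Near any single bubble $x_{j_0}^\pm$ that one term dominates both sums and the ratio is $O(1)$, while in the far field the ratio decays. The extremal regime is the intermediate one, where $|y-x_{j_0}^\pm|$ is comparable to an inter-bubble separation -- $\sim 1/k$ for same-side neighbors and $\sim\bar h$ for $+$-to-$-$ pairs. There the combinatorial blow-up $(2k)^{p-1}\sim\lambda^{8s\tau/(N-2s)}$ is partly offset by the spatial decay $(\lambda|y-x_{j_0}^\pm|)^{-(p-1)\tau}$, and using $k\sim\lambda^{2\tau}$ (forced by $\lambda\in[L_0k^{(N-2s)/(N-4s)},L_1k^{(N-2s)/(N-4s)}]$) together with the cross-bubble sum bounds $\sum_{j\ge2}(\lambda|x_1^+-x_j^+|)^{-\gamma}\le C\lambda^{\tau}$ and $\sum_{j\ge1}(\lambda|x_1^+-x_j^-|)^{-\gamma}\le C\lambda^{\tau}$ from Lemma~\ref{lemA.4} -- exactly the ones invoked in \eqref{a2.14}--\eqref{a2.14.1} and \eqref{a02.14}--\eqref{a02.14.1} -- the net balance produces precisely $\lambda^{(p-1)\tau}=\lambda^{4s\tau/(N-2s)}$. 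The delicate point is the $+$-to-$-$ cross contribution of size $(\lambda\bar h)^{-\gamma}$: only the narrow range $\bar h\in[M_0k^{-(N-2s-1)/(N-2s+1)},M_1k^{-(N-2s-1)/(N-2s+1)}]$ in \eqref{1.7} keeps it inside the $\lambda^{\tau}$ budget, and the hypothesis $N>4s+1$ enters to guarantee $\tau>0$ and to make the summation estimates close.
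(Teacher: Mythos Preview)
Your proposal is correct and matches the paper's proof in all essential respects: the same Taylor bounds split by the sign of $2_s^*-3$, the same substitution of the $\|\cdot\|_*$-weight, and the same reliance on the cross-bubble sums \eqref{a2.14}--\eqref{a2.14.1} to control the ratio $W_1^{2_s^*-1}/W_2$. The paper's only streamlining over your regime-by-regime analysis is the one-line H\"older factorization $W_1^{2_s^*-1}\le W_2\cdot\bigl(\sum_{j,\pm}(1+\lambda|y-x_j^\pm|)^{-\tau}\bigr)^{4s/(N-2s)}$, after which the uniform bound $\sum_{j,\pm}(1+\lambda|y-x_j^\pm|)^{-\tau}\le C\lambda^{\tau}$ (again from \eqref{a2.14}--\eqref{a2.14.1}) yields the $\lambda^{4s\tau/(N-2s)}$ factor directly, sidestepping the intermediate-regime balancing.
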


\begin{proof}
If $2_s^*\leq3$, we have
\begin{equation*}
  |\mathcal{F}(\varphi)|\leq C |\varphi|^{2_s^*-1},
\end{equation*}
which, together with H\"older inequality, yields that
\begin{equation}\label{b2.23}
  \begin{split}
    &|\mathcal{F}(\varphi)|\\
    &\leq C \|\varphi\|_{*}^{2_s^*-1} \bigg(\sum\limits_{j=1}^{k}\Big(\frac{\lambda^{\frac{N-2s}{2}}}{(1+\lambda|y-x_j^+|)^{\frac{N-2s}{2}+\tau}}+\frac{\lambda^{\frac{N-2s}{2}}}{(1+\lambda|y-x_j^-|)^{\frac{N-2s}{2}+\tau}}\Big)\bigg)^{2_s^*-1}\\
    &\leq C \|\varphi\|_{*}^{2_s^*-1} \sum\limits_{j=1}^{k}\Big(\frac{\lambda^{\frac{N+2s}{2}}}{(1+\lambda|y-x_j^+|)^{\frac{N+2s}{2}+\tau}}+\frac{\lambda^{\frac{N+2s}{2}}}{(1+\lambda|y-x_j^-|)^{\frac{N+2s}{2}+\tau}}\Big)\\
    &\quad \times\bigg(\sum\limits_{j=1}^{k}\Big(\frac{1}{(1+\lambda|y-x_j^+|)^{\tau}}+\frac{1}{(1+\lambda|y-x_j^-|)^{\tau}}\Big)\bigg)^\frac{4s}{N-2s}\\
 &\leq  C\lambda^{\frac{4s}{N-2s}\tau} \|\varphi\|_{*}^{2_s^*-1} \sum\limits_{j=1}^{k}\Big(\frac{\lambda^{\frac{N+2s}{2}}}{(1+\lambda|y-x_j^+|)^{\frac{N+2s}{2}+\tau}}+\frac{\lambda^{\frac{N+2s}{2}}}{(1+\lambda|y-x_j^-|)^{\frac{N+2s}{2}+\tau}}\Big). \end{split}
\end{equation}
Therefore,
\begin{equation*}
   \|\mathcal{F}(\varphi)\|_{**}\leq C \lambda^{\frac{4s}{N-2s}\tau}\|\varphi\|_{*}^{2_s^*-1}.
\end{equation*}

Similarly, if $2_s^*>3$, then we have
\begin{equation*}
  \begin{aligned}
   &|\mathcal{F}(\varphi)|\\
   &\leq C |Z_{\bar{r},\bar{h},\bar{y}'',\lambda}|^{2_s^*-3}|\varphi|^2+C |\varphi|^{2_s^*-1}\\
    &\leq C\|\varphi\|_*^2 \bigg(\sum\limits_{j=1}^{k}\Big(\frac{\lambda^{\frac{N-2s}{2}}}{(1+\lambda|y-x_j^+|)^{\frac{N-2s}{2}+\tau}}+\frac{\lambda^{\frac{N-2s}{2}}}{(1+\lambda|y-x_j^-|)^{\frac{N-2s}{2}+\tau}}\Big)\bigg)^{2_s^*-1}\\
    &\leq C\lambda^{\frac{4s}{N-2s}\tau} \|\varphi\|_*^2 \sum\limits_{j=1}^{k}\Big(\frac{\lambda^{\frac{N+2s}{2}}}{(1+\lambda|y-x_j^+|)^{\frac{N+2s}{2}+\tau}}+\frac{\lambda^{\frac{N+2s}{2}}}{(1+\lambda|y-x_j^-|)^{\frac{N+2s}{2}+\tau}}\Big). \end{aligned}
\end{equation*}
Hence, we obtain
\begin{equation*}
  \|\mathcal{F}(\varphi)\|_{**}\leq C\lambda^{\frac{4s}{N-2s}\tau} \|\varphi\|_{*}^{\min\{2_s^*-1,2\}}.
\end{equation*}
\vspace{-8pt}\end{proof}

Next, we estimate $l_k$.
\begin{lemma}\label{lem2.4}
If $5\leq N\leq 8$ and $s$ satisfies \eqref{s}, then there is a small constant $\epsilon>0$, such that
\begin{equation*}
  \|l_k\|_{**}\leq \frac{C}{\lambda^{\frac{2s+1}{2}+\epsilon}}.
\end{equation*}
\end{lemma}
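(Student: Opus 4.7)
The plan is to split $l_k = J_1 + J_2 + J_3$ as in \eqref{2.24} and bound each piece in $\|\cdot\|_{**}$ by $C\lambda^{-(2s+1)/2 - \epsilon}$. Assumption \eqref{s} is engineered, as Remark \ref{rema1.3.1} indicates, precisely so that the three natural decay rates $\frac{N-2s}{2} - \tau$, $2s - \tau$, and $\frac{2s}{N-2s}\bigl(\frac{N+2s}{2} - \tau\bigr)$ all strictly exceed $\frac{2s+1}{2}$; these will turn out to be the rates at which the three pieces $J_1$, $J_2$, $J_3$ decay, respectively.

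For $J_2 = -V(r,y'')Z_{\bar r,\bar h,\bar y'',\lambda}$, I would use that $V$ is bounded on the (fixed) support of $\eta$, so that $|J_2| \leq C\sum_j \eta\bigl(U_{x_j^+,\lambda} + U_{x_j^-,\lambda}\bigr)$. Compared pointwise against the weight in $\|\cdot\|_{**}$, each summand contributes a factor of $\lambda^{-2s}(1 + \lambda|y - x_j^\pm|)^{(6s-N)/2 + \tau}$; since $\mathrm{supp}\,\eta$ is a fixed compact set, the $(1+\lambda|y-x_j^\pm|)$-power is controlled either by $1$ or by a bounded power of $\lambda$, giving $\|J_2\|_{**} \leq C\lambda^{-\min\{2s - \tau,\,(N-2s)/2 - \tau\}}$, which \eqref{s} makes strictly smaller than $\lambda^{-(2s+1)/2 - \epsilon}$.

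For $J_1 = Z_{\bar r,\bar h,\bar y'',\lambda}^{2_s^*-1} - \sum_j \eta\bigl(U_{x_j^+,\lambda}^{2_s^*-1} + U_{x_j^-,\lambda}^{2_s^*-1}\bigr)$, I would work cell by cell on $\Omega_j^\pm$ (where exactly one bubble is dominant) and apply the elementary cross-term inequality $\bigl|(\sum_i a_i)^p - \sum_i a_i^p\bigr| \leq C\bigl(a_1^{p-1}\sum_{i \neq 1} a_i + (\sum_{i \neq 1} a_i)^p\bigr)$, with $a_1$ the dominant bubble. Sums of the form $\sum_{i\neq j}(\lambda|x_j^\pm - x_i^\pm|)^{-\gamma}$ are then controlled by Lemma \ref{lemA.4} together with \eqref{a2.14}--\eqref{a02.14.1}; the new feature compared with \cite{guo2020solutions} is the paired-bubble contribution $(\lambda\bar h)^{-\gamma}$, which is tamed only because \eqref{1.7} forces $\lambda\bar h \to \infty$. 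An optimal choice of splitting parameter produces a residual factor $\lambda^{-\delta}$ with $\delta > \frac{2s+1}{2}$ by \eqref{s}. In the transition region $\{0 < \eta < 1\}$ the points $x_j^\pm$ sit at a positive distance from $y$, so the bubbles are of order $\lambda^{-(N-2s)/2}$ and a crude estimate suffices.

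The remaining piece $J_3$ is the fractional-Laplacian commutator, whose integrand $(\eta(x) - \eta(y)) U_{x_j^\pm,\lambda}(y)/|x - y|^{N+2s}$ vanishes wherever $\eta$ is constant. Since each $x_j^\pm$ lies deep inside $\{\eta \equiv 1\}$, this forces either $|x - x_j^\pm|\gtrsim \sigma$ or $|y - x_j^\pm|\gtrsim \sigma$; splitting the $y$-integration at $|x-y|=1$ and using $|\eta(x) - \eta(y)| \leq C\min(1, |x-y|)$ together with the far-field decay of $U_{x_j^\pm,\lambda}$ gives a pointwise bound of order $\lambda^{-(N-2s)/2 - 2s}$, easily within the required margin after the $k$-fold sum (with $k \lesssim \lambda^{(N-4s)/(N-2s)}$). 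I expect the main obstacle to be the bookkeeping in $J_1$: the paired bubbles $x_j^+$ and $x_j^-$ may come close as $\bar h \to 0$, and \eqref{a2.14.1} together with \eqref{a02.14.1} must be applied delicately so that the $(\lambda\bar h)^{-\gamma}$ contribution of the paired bubble is absorbed rather than dominating, which is exactly what the lower bound on $\bar h$ in \eqref{1.7} is designed to make possible.
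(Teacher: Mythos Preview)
Your handling of $J_1$ and $J_2$ follows essentially the same path as the paper's proof; the rates you list in your opening paragraph are slightly misattributed (in the paper the rate $\frac{2s}{N-2s}\bigl(\frac{N+2s}{2}-\tau\bigr)$ comes from $J_1$, not $J_3$), but since all three exceed $\frac{2s+1}{2}$ under \eqref{s} this is harmless.

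The genuine gap is in $J_3$. Under \eqref{s} one always has $s>\tfrac12$, and in that regime the Lipschitz bound $|\eta(x)-\eta(y)|\le C|x-y|$ alone is \emph{not} sufficient for the near-diagonal part of the commutator: it leaves the kernel $|x-y|^{-(N+2s-1)}$, which is not locally integrable. The principal-value structure must be exploited explicitly. The paper does this by Taylor-expanding $\eta(x)$ about $y$ to second order; the $O(|x-y|^2)$ remainder yields the integrable kernel $|x-y|^{-(N+2s-2)}$, while the first-order piece
\[
\lim_{\varepsilon\to 0^+}\int_{B_{\sigma/4}(y)\setminus B_\varepsilon(y)}\frac{\langle\nabla\eta(y),\,y-x\rangle\, U_{x_j^\pm,\lambda}(x)}{|x-y|^{N+2s}}\,dx
\]
is handled by symmetrizing the annulus under $z=x-y\mapsto -z$: the odd factor $\langle\nabla\eta(y),z\rangle$ then pairs with the difference $U_{x_j^\pm,\lambda}(y+z)-U_{x_j^\pm,\lambda}(y-z)$, and the mean value theorem gains one more power of $|z|$, restoring integrability. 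Without this cancellation step the near-diagonal integral simply diverges, and the pointwise bound of order $\lambda^{-(N-2s)/2-2s}$ you claim cannot be reached by your stated argument. For the far piece $\int_{|x-y|>\sigma/4}$ the paper also performs a three-case analysis in $|y-x_j^\pm|$ (invoking Lemma~\ref{lemC.1} in the intermediate range), which is needed to match the bound against the $\|\cdot\|_{**}$-weight uniformly in $y$; your single ``far-field decay'' estimate does not distinguish these regimes and would not give the correct power of $(1+\lambda|y-x_j^\pm|)$ across all of $\R^N$.
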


\begin{proof}
Recall, for $j=1,2,\cdots,k$,
\begin{equation*}
 \begin{aligned}
  \Omega_j^+:=\Big\{y=(y_1,y_2,&y_3,y'')\in \R^3\times\R^{N-3}:y_3\geq 0~\text{and}~ \\
  &\Big\langle \frac{(y_1,y_2)}{|(y_1,y_2)|},\big(\cos\frac{2(j-1)\pi}{k},\sin\frac{2(j-1)\pi}{k}\big)\Big\rangle_{\R^2}\geq \cos\frac{\pi}{k}\Big\}.
  \end{aligned}
\end{equation*}
By symmetry, we can assume that $y \in \Omega_1^+$. Then it follows
\begin{equation}\label{b2.24}
  |y-x_1^+|\leq |y-x_1^-|, \quad |y-x_1^+|\leq |y-x_j^+|\leq |y-x_j^-|,\quad j=2,3,\cdots,k.
\end{equation}

Firstly, we estimate $J_1$.
\begin{align*}
  |J_1|&=\Big|Z_{\bar{r},\bar{h},\bar{y}'',\lambda}^{2_s^*-1}-\sum\limits_{j=1}^{k} \big(\eta U_{x_j^+,\lambda}^{2_s^*-1}+\eta U_{x_j^-,\lambda}^{2_s^*-1}\big)\Big|\\
  &= \Big|\Big(\sum\limits_{j=1}^k (\eta U_{x_j^+,\lambda}+\eta U_{x_j^-,\lambda})\Big)^{2_s^*-1}-\sum\limits_{j=1}^k (\eta U_{x_j^+,\lambda}^{2_s^*-1}+\eta U_{x_j^-,\lambda}^{2_s^*-1})\Big|\\
  &\leq C \Big(U_{x_1^+,\lambda}^{2_s^*-2}\big(\sum\limits_{j=2}^k U_{x_j^+,\lambda}+\sum\limits_{j=1}^k U_{x_j^-,\lambda}\big)+\big(\sum\limits_{j=2}^kU_{x_j^+,\lambda}+\sum\limits_{j=1}^k U_{x_j^-,\lambda}\big)^{2_s^*-1}\Big)\\
  &\leq C \frac{\lambda^{\frac{N+2s}{2}}}{(1+\lambda|y-x_1^+|)^{4s}} \Big(\sum\limits_{j=2}^{k}\frac{1}{(1+\lambda|y-x_j^+|)^{N-2s}}+\sum\limits_{j=1}^{k}\frac{1}{(1+\lambda|y-x_j^-|)^{N-2s}}\Big)\\
  &\quad +C \Big(\sum\limits_{j=2}^{k}\frac{\lambda^{\frac{N-2s}{2}}}{(1+\lambda|y-x_j^+|)^{N-2s}}\Big)^{2_s^*-1}\\
  &=:J_{11}+J_{12}.
\end{align*}
For the term $J_{11}$, if $N-2s\geq\frac{N+2s}{2}-\tau$, using Lemma \ref{lemA.4.6}, then we have for $y \in \Omega_1^+$,
\begin{align*}
  &\frac{\lambda^{\frac{N+2s}{2}}}{(1+\lambda|y-x_1^+|)^{4s}} \Big(\sum\limits_{j=2}^{k}\frac{1}{(1+\lambda|y-x_j^+|)^{N-2s}}+\sum\limits_{j=1}^{k}\frac{1}{(1+\lambda|y-x_j^-|)^{N-2s}}\Big)\\
  &\leq \frac{C\lambda^{\frac{N+2s}{2}}}{(1+\lambda|y-x_1^+|)^{\frac{N+2s}{2}+\tau}}
  \Big(\sum\limits_{j=2}^{k}\frac{1}{(\lambda|x_1^+-x_j^+|)^{\frac{N+2s}{2}-\tau}}+\sum\limits_{j=1}^{k}\frac{1}{(\lambda|x_1^+-x_j^-|)^{\frac{N+2s}{2}-\tau}}\Big)\\
  &\leq \frac{C}{\lambda^{\frac{2s}{N-2s}(\frac{N+2s}{2}-\tau)}}\frac{\lambda^{\frac{N+2s}{2}}}{(1+\lambda|y-x_1^+|)^{\frac{N+2s}{2}+\tau}},
\end{align*}
where we used Lemma \ref{lemA.4} and $\bar{h}$ satisfying \eqref{1.7}.

Similarly, if $N-2s<\frac{N+2s}{2}-\tau$, which implies that $4s>\frac{N+2s}{2}+\tau$, then we obtain that
\begin{align*}
   &\frac{\lambda^{\frac{N+2s}{2}}}{(1+\lambda|y-x_1^+|)^{4s}} \Big(\sum\limits_{j=2}^{k}\frac{1}{(1+\lambda|y-x_j^+|)^{N-2s}}+\sum\limits_{j=1}^{k}\frac{1}{(1+\lambda|y-x_j^-|)^{N-2s}}\Big)\\
  &\leq \frac{C\lambda^{\frac{N+2s}{2}}}{(1+\lambda|y-x_1^+|)^{\frac{N+2s}{2}+\tau}}\Big(\sum\limits_{j=2}^{k}\frac{1}{(\lambda|x_1^+-x_j^+|)^{N-2s}}+\sum\limits_{j=1}^{k}\frac{1}{(\lambda|x_1^+-x_j^-|)^{N-2s}}\Big)\\
  &\leq \frac{C}{\lambda^{2s}}\frac{\lambda^{\frac{N+2s}{2}}}{(1+\lambda|y-x_1^+|)^{\frac{N+2s}{2}+\tau}}.
\end{align*}

Hence, if $s$ satisfies \eqref{s}, then we have
\begin{equation}\label{a2.24}
\|J_{11}\|_{**}\leq \frac{C}{\lambda^{\frac{2s+1}{2}+\epsilon}}.
\end{equation}

As for $J_{12}$, using the H\"older inequality, we have
\begin{align*}
  &\Big(\sum\limits_{j=2}^{k}\frac{\lambda^{\frac{N-2s}{2}}}{(1+\lambda|y-x_j^+|)^{N-2s}}\Big)^{2_s^*-1}\\
  &\leq
\sum\limits_{j=2}^{k}\frac{\lambda^{\frac{N+2s}{2}}}{(1+\lambda|y-x_j^+|)^{\frac{N+2s}{2}+\tau}}\Big(\sum\limits_{j=2}^{k}\frac{1}{(1+\lambda|y-x_j^+|)^{\frac{N+2s}{4s}(\frac{N-2s}{2}-\frac{N-2s}{N+2s}\tau)}}\Big)^{\frac{4s}{N-2s}}\\
&\leq
\sum\limits_{j=2}^{k}\frac{C\lambda^{\frac{N+2s}{2}}}{(1+\lambda|y-x_j^+|)^{\frac{N+2s}{2}+\tau}}\Big(\sum\limits_{j=2}^{k}\frac{1}{(\lambda|x_1^+-x_j^+|)^{\frac{N+2s}{4s}(\frac{N-2s}{2}-\frac{N-2s}{N+2s}\tau)}}\Big)^{\frac{4s}{N-2s}}\\
&\leq C\sum\limits_{j=2}^{k}\frac{\lambda^{\frac{N+2s}{2}}}{(1+\lambda|y-x_j^+|)^{\frac{N+2s}{2}+\tau}}\Big(\frac{k}{\lambda}\Big)^{\frac{N+2s}{N-2s}(\frac{N-2s}{2}-\frac{N-2s}{N+2s}\tau)}\\
&\leq C\sum\limits_{j=2}^{k}\frac{\lambda^{\frac{N+2s}{2}}}{(1+\lambda|y-x_j^+|)^{\frac{N+2s}{2}+\tau}}\frac{1}{\lambda^{\frac{2s}{N-2s}(\frac{N+2s}{2}-\tau)}},
\end{align*}
since $\frac{N+2s}{4s}(\frac{N-2s}{2}-\frac{N-2s}{N+2s}\tau)>1$.\\
Thus,
\begin{equation*}
 \|J_{1}\|_{**}\leq \frac{C}{\lambda^{\frac{2s+1}{2}+\epsilon}}.
\end{equation*}

Secondly, we estimate $J_2$. Note that $\eta=0$ when $|(r,y'')-(r_0,y_0'')|\geq 2\sigma$ and $\frac{1}{\lambda}\leq \frac{C}{1+\lambda|y-x_j^\pm|}$ when $|(r,y'')-(r_0,y_0'')| < 2\sigma$. If $N-2s\geq\frac{N+2s}{2}-\tau$, then we have
\begin{align*}
  |J_2|&\leq \frac{C}{\lambda^{2s}} \sum\limits_{j=1}^{k}\Big(\frac{\eta\lambda^{\frac{N+2s}{2}}}{(1+\lambda|y-x_j^+|)^{N-2s}}+\frac{\eta\lambda^{\frac{N+2s}{2}}}{(1+\lambda|y-x_j^-|)^{N-2s}}\Big)\\
        &\leq \frac{C}{\lambda^{{2s}}} \sum\limits_{j=1}^{k}\Big(\frac{\lambda^{\frac{N+2s}{2}}}{(1+\lambda|y-x_j^+|)^{\frac{N+2s}{2}+\tau}}
        +\frac{\lambda^{\frac{N+2s}{2}}}{(1+\lambda|y-x_j^-|)^{\frac{N+2s}{2}+\tau}}\Big).\\
\end{align*}
If $N-2s<\frac{N+2s}{2}-\tau$, then we have
\begin{align*}
  |J_2|&\leq \frac{C}{\lambda^{2s}} \sum\limits_{j=1}^{k}\Big(\frac{\eta\lambda^{\frac{N+2s}{2}}}{(1+\lambda|y-x_j^+|)^{N-2s}}+\frac{\eta\lambda^{\frac{N+2s}{2}}}{(1+\lambda|y-x_j^-|)^{N-2s}}\Big)\\
        &\leq \frac{C}{\lambda^{\frac{N-2s}{2}-\tau}} \sum\limits_{j=1}^{k}\Big(\frac{\lambda^{\frac{N+2s}{2}}}{(1+\lambda|y-x_j^+|)^{\frac{N+2s}{2}+\tau}}
        +\frac{\lambda^{\frac{N+2s}{2}}}{(1+\lambda|y-x_j^-|)^{\frac{N+2s}{2}+\tau}}\Big).\\
\end{align*}
So,
\begin{equation*}
  \|J_{2}\|_{**}\leq \frac{C}{\lambda^{\frac{2s+1}{2}+\epsilon}}.
\end{equation*}

Finally, we estimate $J_3$. We have
\begin{align*}
    J_3 &=\sum\limits_{j=1}^{k}c(N,s)\bigg(\lim\limits_{\varepsilon \to 0^+} \int_{B_{\frac{\sigma}{4}}(y) \backslash B_\varepsilon(y)}\frac{\big(\eta(y)-\eta(x)\big)\big(U_{x_j^+,\lambda}(x)+U_{x_j^-,\lambda}(x)\big)}{|x-y|^{N+2s}} dx \\
    &\quad + \int_{\R^N \backslash B_{\frac{\sigma}{4}}(y)} \frac{\big(\eta(y)-\eta(x)\big)\big(U_{x_j^+,\lambda}(x)+U_{x_j^-,\lambda}(x)\big)}{|x-y|^{N+2s}} dx\bigg)\\
    &=:\sum\limits_{j=1}^{k}c(N,s)(J_{31}+J_{32}).
\end{align*}

We first estimate the term $J_{31}$. From the definition of the function $\eta$, we have $\eta(y)-\eta(x)=0$ when $x,y\in B_\sigma((r_0,y''_0))$ or $x,y\in\R^N\backslash B_{2\sigma}((r_0,y''_0))$. So, when $|y-(r_0,y''_0)|<\frac{3\sigma}{4}$ or $|y-(r_0,y''_0)|>\frac{9\sigma}{4}$, we have $\eta(y)-\eta(x)\equiv0$ for $x \in B_{\frac{\sigma}{4}}(y)$, hence $J_{31}\neq 0$ only if $B_{\frac{\sigma}{4}}(y)\subset B_{\frac{5}{2}\sigma}((r_0,y''_0))\backslash B_{\frac{1}{2}\sigma}((r_0,y''_0))$. Since $x_j^{\pm}\to(r_0,y''_0)$, we consider the case $B_{\frac{\sigma}{4}}(y)\subset B_{\frac{11}{4}\sigma}(x^{\pm}_j)\backslash B_{\frac{1}{4}\sigma}(x^{\pm}_j)$, hence
\begin{equation*}
  \frac{\sigma}{2} \leq|y-x_j^{\pm}|\leq \frac{5\sigma}{2} \quad \text{~and~} \quad \frac{\sigma}{4} \leq|x-x_j^{\pm}|\leq \frac{11\sigma}{4}, \quad \forall~ x\in B_{\frac{\sigma}{4}}(y).
\end{equation*}
Then we have $\frac{1}{10}|y-x_j^{\pm}|\leq|x-x_j^{\pm}|\leq\frac{11}{2}|y-x_j^{\pm}|$.

Furthermore,
\begin{align*}
  &\lim\limits_{\varepsilon \to 0^+} \int_{B_{\frac{\sigma}{4}}(y) \backslash B_\varepsilon(y)}\frac{\big(\eta(y)-\eta(x)\big)U_{x_j^{\pm},\lambda}(y)}{|x-y|^{N+2s}} dx\\
  &=\lim\limits_{\varepsilon \to 0^+} \int_{B_{\frac{\sigma}{4}}(y) \backslash B_\varepsilon(y)}\frac{\langle\nabla \eta(y),y-x\rangle U_{x_j^{\pm},\lambda}(x)}{|x-y|^{N+2s}} dx\\
  &\quad +O\Big(\lim\limits_{\varepsilon \to 0^+} \int_{B_{\frac{\sigma}{4}}(y) \backslash B_\varepsilon(y)}\frac{U_{x_j^{\pm},\lambda}(x)}{|x-y|^{N+2s-2}} dx\Big).
\end{align*}
Note that $B_{\frac{\sigma}{4}}(y) \backslash B_\varepsilon(y)$ is a symmetric set. By the mean value theorem, we obtain that
\begin{align*}
  &\Big|\lim\limits_{\varepsilon \to 0^+} \int_{B_{\frac{\sigma}{4}}(y) \backslash B_\varepsilon(y)}\frac{\langle\nabla \eta(y),y-x\rangle U_{x_j^{\pm},\lambda}(x)}{|x-y|^{N+2s}}\Big|\\
  &=\Big|\lim\limits_{\varepsilon \to 0^+} \int_{B_{\frac{\sigma}{4}}(0) \backslash B_\varepsilon(0)}\frac{\langle\nabla \eta(y),z\rangle}{|z|^{N+2s}} \frac{C_N\lambda^{\frac{N-2s}{2}}}{(1+\lambda^{2}|z+y-x^{\pm}_j|^{2})^{\frac{N-2s}{2}}}\Big|\\
  &=\Big|\lim\limits_{\varepsilon \to 0^+} \int_{B_{\frac{\sigma}{4}}(0) \backslash B_\varepsilon(0)}\frac{\langle\nabla \eta(y),z\rangle}{2|z|^{N+2s}}
  \bigg(\frac{C_N\lambda^{\frac{N-2s}{2}}}{(1+\lambda^{2}|z+y-x^{\pm}_j|^{2})^{\frac{N-2s}{2}}}\\
  &\quad -\frac{C_N\lambda^{\frac{N-2s}{2}}}{(1+\lambda^{2}|-z+y-x^{\pm}_j|^{2})^{\frac{N-2s}{2}}}\bigg)\Big|\\
  &\leq C\lambda^{\frac{N-2s}{2}+1}\int_{B_{\frac{\sigma}{4}}(0)}\frac{|\nabla \eta(y)|}{|z|^{N+2s-2}} \frac{1}{(1+\lambda|(2\vartheta-1)z+y-x^{\pm}_j|)^{N-2s+1}}\\
  &\leq \frac{C}{\lambda^{{\frac{2s+1}{2}}+\epsilon}}\frac{\lambda^{\frac{N+2s}{2}}}{(1+\lambda|y-x^{\pm}_j|)^{\frac{N+2s}{2}+\tau}},
\end{align*}
for $0<\vartheta<1$ and since $|(2\vartheta-1)z+y-x^+_j|\geq|y-x^{\pm}_j|-|(2\vartheta-1)z|\geq\frac{1}{10}|y-x^{\pm}_j|$ for $z\in B_{\frac{\sigma}{4}(0)}$.

On the other hand, we can obtain
\begin{align*}
&\Big|\lim\limits_{\varepsilon \to 0^+} \int_{B_{\frac{\sigma}{4}}(y) \backslash B_\varepsilon(y)}\frac{U_{x_j^{\pm},\lambda}(x)}{|x-y|^{N+2s-2}}\Big|\\
&\leq C\int_{B_{\frac{\sigma}{4}}(y)}\frac{1}{|x-y|^{N+2s-2}}\frac{\lambda^{\frac{N-2s}{2}}}{(1+\lambda|x-x^{\pm}_j|)^{N-2s}}\\
&\leq \frac{C}{\lambda^{{2s}}}\int_{B_{\frac{\sigma}{4}}(y)}
\frac{1}{|x-y|^{N+2s-2}}\frac{\lambda^{\frac{N+2s}{2}}}{(1+\lambda|x-x^{\pm}_j|)^{N-2s}}\\
&\leq \frac{C}{\lambda^{\frac{2s+1}{2}+\epsilon}}\frac{\lambda^{\frac{N+2s}{2}}}{(1+\lambda|y-x^{\pm}_j|)^{\frac{N+2s}{2}+\tau}}.
\end{align*}
Hence,
\begin{align*}
  |J_{31}|&=\Bigg|\lim\limits_{\varepsilon \to 0^+} \int_{B_{\frac{\sigma}{4}}(y) \backslash B_\varepsilon(y)}\frac{\big(\eta(y)-\eta(x)\big)\big(U_{x_j^+,\lambda}(x)+U_{x_j^-,\lambda}(x)\big)}{|x-y|^{N+2s}}\Bigg|\\
  &\leq \frac{C}{\lambda^{\frac{2s+1}{2}+\epsilon}}\bigg(\frac{\lambda^{\frac{N+2s}{2}}}{(1+\lambda|y-x^+_j|)^{\frac{N+2s}{2}+\tau}}+\frac{\lambda^{\frac{N+2s}{2}}}{(1+\lambda|y-x^-_j|)^{\frac{N+2s}{2}+\tau}}\bigg).
\end{align*}

As for the term $J_{32}$, we divide it into the following three cases:

\vspace{4pt}\noindent
\textbf{Case 1.} If $|y-x^+_j|\leq\sigma$, together with the definition of the function $\eta$, then we have
\begin{align*}
&\Big|\int_{\R^N \backslash B_{\frac{\sigma}{4}}(y)}\frac{\big(\eta(y)-\eta(x)\big)U_{x_j^+,\lambda}(x)}{|x-y|^{N+2s}}\Big|\\
&\leq C\int_{\R^N \backslash \big(B_{\frac{\sigma}{4}}(y)\cup B_\sigma((x_j^+))\big)} \frac{1}{|x-y|^{N+2s}}\frac{\lambda^{\frac{N-2s}{2}}}{(1+\lambda|x-x^+_j|)^{N-2s}}\\
&\leq \frac{C}{\lambda^{2s}}\frac{\lambda^{\frac{N+2s}{2}}}{(1+\lambda|y-x^+_j|)^{N-2s}}
\int_{\R^N \backslash B_{\frac{\sigma}{4}}(y)}\frac{1}{|x-y|^{N+2s}}\\
&\leq \frac{C}{\lambda^{\frac{2s+1}{2}+\epsilon}}\frac{\lambda^{\frac{N+2s}{2}}}{(1+\lambda|y-x^+_j|)^{\frac{N+2s}{2}+\tau}},
\end{align*}
where we used $\frac{1}{\lambda}\leq\frac{C}{1+\lambda|y-x_j^+|}$ for $|y -x^+_j|\leq\sigma$.

\vspace{4pt}\noindent
\textbf{Case 2.} If $\sigma<|y-x^+_j|\leq3\sigma$, using Lemma \ref{lemC.1}, then there holds
\begin{align*}
&\Big|\int_{\R^N \backslash B_{\frac{\sigma}{4}}(y)} \frac{\big(\eta(y)-\eta(x)\big)U_{x_j^+,\lambda}(x)}{|x-y|^{N+2s}}\Big|\\
&\leq C\int_{\R^N \backslash B_{\frac{\sigma}{4}}(y)} \frac{1}{|x-y|^{N+2s}} \frac{\lambda^{\frac{N-2s}{2}}}{(1+\lambda|x-x^+_j|)^{N-2s}}\\
&\leq C \lambda^{\frac{N+2s}{2}}\int_{\R^N \backslash B_{\frac{\sigma\lambda}{4}}(\lambda y)} \frac{1}{|z-\lambda y|^{N+2s}} \frac{1}{(1+|z-\lambda x^+_j|)^{N-2s}}\\
&\leq C\lambda^{\frac{N+2s}{2}}\Big(\frac{1}{(\lambda|y-x^+_j|)^N}+\frac{1}{\lambda^{2s}}\frac{1}{(\lambda|y-x^+_j|)^{N-2s}}\Big)\\
&\leq \frac{C}{\lambda^{\frac{2s+1}{2}+\epsilon}}\frac{\lambda^{\frac{N+2s}{2}}}{(1+\lambda|y-x^+_j|)^{\frac{N+2s}{2}+\tau}},
\end{align*}
where we used $\frac{C_1}{1+\lambda|y-x_j^+|}\leq\frac{1}{\lambda}\leq\frac{C_2}{1+\lambda|y-x_j^+|}$ for $\sigma\leq|y -x^+_j|\leq3\sigma$.

\vspace{4pt}\noindent
\textbf{Case 3.} Suppose that $|y-x^+_j|>3\sigma$. Note that $|x-y|\geq|y-x^+_j|-|x-x^+_j|\geq\frac{1}{3}|y-x^+_j|$ when $|y-x^+_j|>3\sigma$ and $|x-x^+_j|\leq2\sigma$. Then we have
\begin{align*}
&\Big|\int_{\R^N \backslash B_{\frac{\sigma}{4}}(y)} \frac{\big(\eta(y)-\eta(x)\big)U_{x_j^+,\lambda}(x)}{|x-y|^{N+2s}}\Big|\\
&\leq \int_{B_{2\sigma(x_j^+)}} \frac{1}{|x-y|^{N+2s}} \frac{\lambda^{\frac{N-2s}{2}}}{(1+\lambda|x-x^+_j|)^{N-2s}}\\
&\leq \frac{C}{\lambda^{\frac{N-2s}{2}}} \int_{B_{2\sigma(x_j^+)}} \frac{1}{|x-y|^{N+2s}} \frac{1}{(|x- x^+_j|)^{N-2s}}\\
&\leq \frac{C}{\lambda^{\frac{2s+1}{2}+\epsilon}}\frac{\lambda^{\frac{N+2s}{2}}}{(1+\lambda|y-x^+_j|)^{\frac{N+2s}{2}+\tau}},
\end{align*}
where we used $\frac{1}{|x-y|}\leq\frac{C\lambda}{1+\lambda|y-x^+_j|}$.

Hence, we have
\begin{align*}
  \Big|\int_{\R^N \backslash B_{\frac{\sigma}{4}}(y)} \frac{\big(\eta(y)-\eta(x)\big)U_{x_j^+,\lambda}(x)}{|x-y|^{N+2s}} dx\Big|
  \leq \frac{C}{\lambda^{\frac{2s+1}{2}+\epsilon}}\frac{\lambda^{\frac{N+2s}{2}}}{(1+\lambda|y-x^+_j|)^{\frac{N+2s}{2}+\tau}}.
\end{align*}

Similarly, we can deduce that
\begin{align*}
  \Big|\int_{\R^N \backslash B_{\frac{\sigma}{4}}(y)} \frac{\big(\eta(y)-\eta(x)\big)U_{x_j^-,\lambda}(x)}{|x-y|^{N+2s}} dx\Big|
  \leq \frac{C}{\lambda^{\frac{2s+1}{2}+\epsilon}}\frac{\lambda^{\frac{N+2s}{2}}}{(1+\lambda|y-x^-_j|)^{\frac{N+2s}{2}+\tau}}.
\end{align*}
So, we obtain
\begin{equation}\label{a2.28}
  \begin{aligned}
  \|J_{3}\|_{**}\leq \frac{C}{\lambda^{\frac{2s+1}{2}+\epsilon}}.
  \end{aligned}
\end{equation}

As a result, we have proved that
\begin{equation*}
  \|l_k\|_{**}\leq \frac{C}{\lambda^{\frac{2s+1}{2}+\epsilon}}.
\end{equation*}
\end{proof}

Now, we are in a position to prove Proposition \ref{prop2.5}.
\begin{proof}[\textbf{Proof of Proposition \ref{prop2.5}.}]
We first denote
\begin{equation}\label{2.29}
  \mathbb{E}=\Big\{u:u \in C(\R^N)\cap\mathbb{H},~\|u\|_*\leq \frac{1}{\lambda^{\frac{2s+1}{2}}}\Big\}.
\end{equation}
By Lemma \ref{lem2.2}, the existence and properties of the solution $\varphi$ to problem \eqref{2.22} is simplified to find a fixed point for
\begin{equation}\label{2.30}
  \varphi=\mathcal{A}(\varphi)=:\mathcal{L}_k(\mathcal{F}(\varphi))+\mathcal{L}_k(l_k),
\end{equation}
where $\mathcal{L}_k$ is the linear bounded operator defined in Lemma \ref{lem2.2}.

Next, we will prove that $\mathcal{A}$ is a contraction map from $\mathbb{E}$ to $\mathbb{E}$. In fact, if $\varphi \in L^\infty(\R^N)$, then by Proposition 2.9 in \cite{silvestre2007regularity}, we can obtain $\varphi \in C(\R^N)$. For any $\varphi \in \mathbb{E}$, by Lemma \ref{lem2.2}, Lemma \ref{lem2.3} and Lemma \ref{lem2.4}, we have
\begin{align*}
    \|\mathcal{A}(\varphi)\|_* &\leq \|\mathcal{L}_k(\mathcal{F}(\varphi))\|_*+\|\mathcal{L}_k(l_k)\|_*\\
    &\leq C\|\mathcal{F}(\varphi)\|_{**}+C\|l_k\|_{**}\\
    &\leq C\lambda^{\frac{4s}{N-2s}\tau} \|\varphi\|_{*}^{\min\{2_s^*-1,2\}}+\frac{C}{\lambda^{{\frac{2s+1}{2}}+\epsilon}}
    \leq \frac{1}{\lambda^{{\frac{2s+1}{2}}}}.
\end{align*}
This shows that $\mathcal{A}$ maps $\mathbb{E}$ to $\mathbb{E}$.

On the other hand, for all $\varphi_1,\varphi_2 \in \mathbb{E}$, we have
\begin{equation}\label{2.31}
  \|\mathcal{A}(\varphi_1)-\mathcal{A}(\varphi_2)\|_*=\|\mathcal{L}_k(\mathcal{F}(\varphi_1))-\mathcal{L}_k(\mathcal{F}(\varphi_2))\|_* \leq C\|\mathcal{F}(\varphi_1)-\mathcal{F}(\varphi_2)\|_{**}.
\end{equation}
If $2_s^*\leq3$, using H\"older inequality like \eqref{b2.23}, then we have
\begin{align*}
    &|\mathcal{F}(\varphi_1)-\mathcal{F}(\varphi_2)| \\
    &\leq C(|\varphi_1|^{2_s^*-2}+|\varphi_2|^{2_s^*-2})|\varphi_1-\varphi_2|\\
    &\leq C(\|\varphi_1\|_*^{2_s^*-2}+\|\varphi_2\|_*^{2_s^*-2})\|\varphi_1-\varphi_2\|_*\\
    &\quad \times\Big(\sum\limits_{j=1}^{k}\big(\frac{\lambda^{\frac{N-2s}{2}}}{(1+\lambda|y-x_j^+|)^{\frac{N-2s}{2}+\tau}}+\frac{\lambda^{\frac{N-2s}{2}}}{(1+\lambda|y-x_j^-|)^{\frac{N-2s}{2}+\tau}}\big)\Big)^{2_s^*-1}\\
  &\leq C(\|\varphi_1\|_*^{2_s^*-2}+\|\varphi_2\|_*^{2_s^*-2})\|\varphi_1-\varphi_2\|_*\\
  &\quad \times\Big(\sum\limits_{j=1}^{k}\big(\frac{\lambda^{\frac{N+2s}{2}}}{(1+\lambda|y-x_j^+|)^{\frac{N+2s}{2}+\tau}}+\frac{\lambda^{\frac{N+2s}{2}}}{(1+\lambda|y-x_j^-|)^{\frac{N+2s}{2}+\tau}}\big)\Big).
\end{align*}
Thus
\begin{equation}\label{2.32}
  \|\mathcal{A}(\varphi_1)-\mathcal{A}(\varphi_2)\|_* \leq C(\|\varphi_1\|_*^{2_s^*-2}+\|\varphi_2\|_*^{2_s^*-2})\|\varphi_1-\varphi_2\|_*\leq \frac{1}{2}\|\varphi_1-\varphi_2\|_*.
\end{equation}
Therefore, $\mathcal{A}$ is a contraction map from $\mathbb{E}$ to $\mathbb{E}$. The case $2_s^*>3$ can be discussed in a similar way.

By the contraction mapping theorem, there exists a unique $\varphi=\varphi_{\bar{r},\bar{h},\bar{y}'',\lambda} \in \mathbb{E}$ such that \eqref{2.30} holds. Furthermore, according to Lemma \ref{lem2.2}, Lemma \ref{lem2.3} and Lemma \ref{lem2.4}, we deduce
\begin{equation*}
  \|\varphi\|_* \leq \|\mathcal{L}_k(\mathcal{F}(\varphi))\|_*+\|\mathcal{L}_k(l_k)\|_* \leq C\|\mathcal{F}(\varphi)\|_{**}+C\|l_k\|_{**}\leq \frac{C}{\lambda^{{\frac{2s+1}{2}}+\epsilon}},
\end{equation*}
and
\begin{equation*}
  |c_l|\leq\frac{C}{\lambda^{n_l}}\|\mathcal{F}(\varphi)+l_k\|_{**} \leq \frac{C}{\lambda^{{\frac{2s+1}{2}}+n_l+\epsilon}},
\end{equation*}
for $l=1,2,\cdots,N$.
\end{proof}

\section{Proof of Theorem \ref{theo1.2}}\label{sec3}

Let $\varphi=\varphi_{\bar{r},\bar{h},\bar{y}'',\lambda}$ be the function obtained in Proposition \ref{prop2.5} and $u_k=Z_{\bar{r},\bar{h},\bar{y}'',\lambda}+\varphi$. Following the idea in \cite{peng2018construction}, in order to use local Pohozaev identities, we quote the extension of $u_k$, that is
\begin{equation*}
  \tilde{u}_k=\tilde{Z}_{\bar{r},\bar{h},\bar{y}'',\lambda}+\tilde{\varphi},
\end{equation*}
where $\tilde{Z}_{\bar{r},\bar{h},\bar{y}'',\lambda}$ and $\tilde{\varphi}$ are the extensions of $Z_{\bar{r},\bar{h},\bar{y}'',\lambda}$ and $\varphi$, respectively. Then we have
\begin{equation}\label{3.1}
  \left\{
  \begin{array}{ll}
    \text{div}(t^{1-2s}\nabla\tilde{u}_k)=0,&\text{in}\quad \R_+^{N+1},\\
    \begin{aligned}
    -\lim\limits_{t \to 0^+}t^{1-2s}\partial_t\tilde{u}_k
    &=\omega_s\Big(-V(r,y'')u_k+(u_k)_+^{2_s^*-1}\\
    &\quad +\sum\limits_{l=1}^N c_l\sum\limits_{j=1}^k\big(Z_{x_j^+,\lambda}^{2_s^*-2}Z_{j,l}^+
    +Z_{x_j^-,\lambda}^{2_s^*-2}Z_{j,l}^-\big)\Big),
    \end{aligned}
&\text{on} \quad \R^N.
  \end{array}
\right.
\end{equation}
Without loss of generality, we may assume $\omega_s=1$.

Multiplying equation \eqref{3.1} by $\frac{\partial\tilde{u}_k}{\partial y_i}~(i=4,\cdots,N)$ and $\langle\nabla\tilde{u}_k,Y\rangle$, then integrating by parts, we have the following two Pohozaev identities
\begin{equation}\label{3.2}
  \begin{split}
    &\int_{\partial''{B_\rho^+}} \Big(-t^{1-2s}\frac{\partial\tilde{u}_k}{\partial \nu}\frac{\partial\tilde{u}_k}{\partial y_i}+\frac{1}{2} t^{1-2s}|\nabla\tilde{u}_k|^2\nu_i\Big)\\
    &=\int_{B_\rho}\Big(-V(r,y'')u_k+(u_k)_+^{2_s^*-1}+\sum\limits_{l=1}^N
c_l\sum\limits_{j=1}^k\big(Z_{x_j^+,\lambda}^{2_s^*-2}Z_{j,l}^+ +Z_{x_j^-,\lambda}^{2_s^*-2}Z_{j,l}^-\big)\Big)\frac{\partial u_k}{\partial y_i},
  \end{split}
\end{equation}
for $i=4,\cdots,N$ and
\begin{equation}\label{3.3}
  \begin{split}
    &\int_{\partial''{B_\rho^+}} \Big(-t^{1-2s}\langle\nabla\tilde{u}_k,Y\rangle\frac{\partial\tilde{u}_k}{\partial \nu}+\frac{1}{2}t^{1-2s}|\nabla\tilde{u}_k|^2\langle Y,\nu\rangle\Big) +\frac{2s-N}{2}\int_{\partial{B_\rho^+}}t^{1-2s}\frac{\partial\tilde{u}_k}{\partial \nu}\tilde{u}_k\\
    &=\int_{{B_\rho}}\Big(-V(r,y'')u_k+(u_k)_+^{2_s^*-1}+\sum\limits_{l=1}^N
c_l\sum\limits_{j=1}^k\big(Z_{x_j^+,\lambda}^{2_s^*-2}Z_{j,l}^+ +Z_{x_j^-,\lambda}^{2_s^*-2}Z_{j,l}^-\big)\Big)\langle \nabla u_k,y\rangle.
  \end{split}
\end{equation}
The proof is standard, which will be given in Appendix C.

In this section, we will choose suitable $(\bar{r},\bar{h},\bar{y}'',\lambda)$ such that $u_k=Z_{\bar{r},\bar{h},\bar{y}'',\lambda}+\varphi_{\bar{r},\bar{h},\bar{y}'',\lambda}$ is a solution of \eqref{1.1}. For this purpose, we need the following result.

\begin{proposition}\label{prop3.1}
Suppose that $(\bar{r},\bar{h},\bar{y}'',\lambda)$ satisfies
\begin{equation}\label{3.4}
  \begin{split}
\int_{\partial''{B_\rho^+}} \Big(-t^{1-2s}\frac{\partial\tilde{u}_k}{\partial \nu}\frac{\partial\tilde{u}_k}{\partial y_i}+\frac{1}{2} t^{1-2s}|\nabla\tilde{u}_k|^2\nu_i\Big)
    =\int_{B_\rho}\Big(-V(r,y'')u_k+(u_k)_+^{2_s^*-1}\Big)\frac{\partial u_k}{\partial y_i}
  \end{split}
\end{equation}
for $i=4,\cdots,N$,
\begin{equation}\label{3.5}
  \begin{split}
    &\int_{\partial''{B_\rho^+}} \Big(-t^{1-2s}\langle\nabla\tilde{u}_k,Y\rangle\frac{\partial\tilde{u}_k}{\partial \nu}+\frac{1}{2}t^{1-2s}|\nabla\tilde{u}_k|^2\langle Y,\nu\rangle\Big) +\frac{2s-N}{2}\int_{\partial {B_\rho^+}}t^{1-2s}\frac{\partial\tilde{u}_k}{\partial \nu}\tilde{u}_k\\
    &=\int_{B_\rho}\Big(-V(r,y'')u_k+(u_k)_+^{2_s^*-1}\Big)\langle \nabla u_k,y\rangle,
  \end{split}
\end{equation}
\begin{equation}\label{3.6a}
\int_{\R^N}\Big((-\Delta)^s u_k+V(r,y'')u_k-(u_k)_+^{2_s^*-1}\Big)\frac{\partial Z_{\bar{r},\bar{h},\bar{y}'',\lambda}}{\partial \bar{h}}=0,
\end{equation}
and
\begin{equation}\label{3.6}
\int_{\R^N}\Big((-\Delta)^s u_k+V(r,y'')u_k-(u_k)_+^{2_s^*-1}\Big)\frac{\partial Z_{\bar{r},\bar{h},\bar{y}'',\lambda}}{\partial \lambda}=0,
\end{equation}
where $u_k=Z_{\bar{r},\bar{h},\bar{y}'',\lambda}+\varphi_{\bar{r},\bar{h},\bar{y}'',\lambda}$ and $B_\rho=\{y:|y-(r_0,y''_0)|\leq \rho\}$ with $\rho \in (2\sigma,5\sigma)$. Then $c_l=0$, $l=1,2,\cdots,N$.
\end{proposition}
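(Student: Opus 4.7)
My plan is to convert the four hypotheses into an $N\times N$ homogeneous linear system for $(c_1,\dots,c_N)$ and then show its coefficient matrix is invertible. Because $u_k=Z_{\bar r,\bar h,\bar y'',\lambda}+\varphi_{\bar r,\bar h,\bar y'',\lambda}$ satisfies the PDE \eqref{2.21}, the Pohozaev identities \eqref{3.2} and \eqref{3.3} hold automatically for $u_k$. Subtracting the assumed \eqref{3.4} and \eqref{3.5} from them, and testing the PDE against $\partial Z_{\bar r,\bar h,\bar y'',\lambda}/\partial\bar h$ and $\partial Z_{\bar r,\bar h,\bar y'',\lambda}/\partial\lambda$ via \eqref{3.6a} and \eqref{3.6}, one obtains exactly $(N-3)+1+1+1=N$ homogeneous relations
\[
\sum_{l=1}^N M_{\alpha l}\,c_l=0,\qquad \alpha=1,\dots,N,
\]
whose coefficients $M_{\alpha l}$ are integrals of $\sum_{j=1}^k(Z_{x_j^+,\lambda}^{2_s^*-2}Z_{j,l}^{+}+Z_{x_j^-,\lambda}^{2_s^*-2}Z_{j,l}^{-})$ against the corresponding test function.

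Next I would analyze $M_{\alpha l}$ asymptotically as $k\to\infty$ by attaching to each equation a privileged mode: the $N-3$ rows coming from \eqref{3.4} pair with the translations $l=i\in\{4,\dots,N\}$, the row from \eqref{3.6a} pairs with $l=3$, the row from \eqref{3.6} pairs with $l=1$, and the Pohozaev-dilation row from \eqref{3.5} pairs with $l=2$; indeed, near $x_j^\pm$ one has the decomposition $\langle\nabla u_k,y\rangle=\langle x_j^\pm,\nabla U\rangle+\lambda\,\partial U/\partial\lambda-\tfrac{N-2s}{2}U+\cdots$, so that once the dilation piece is eliminated using \eqref{3.6}, only the radial translation component survives. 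Since the kernels $\partial U_{x_j^\pm,\lambda}/\partial\lambda,\partial U_{x_j^\pm,\lambda}/\partial y_i$ are $U_{x_j^\pm,\lambda}^{2_s^*-2}$-orthogonal at each single bubble, the diagonal entry $M_{\alpha,l(\alpha)}$ equals $k$ times a non-zero constant, essentially $\int_{\R^N}U_{0,1}^{2_s^*-2}|\partial U_{0,1}/\partial y_{l(\alpha)}|^2$ (or the analogous scaling integral), carried by the appropriate power of $\lambda$.

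The remaining task is to show that the off-diagonal and cross-bubble pieces of $M$ are subleading. There are three sources to control: (i) distinct kernels on the \emph{same} bubble vanish at main order by exact orthogonality; (ii) interactions between bubbles at distinct sites, which by Lemma \ref{lemA.4} and the regime \eqref{1.7} are bounded by sums of the form $\sum_{j\ne1}(\lambda|x_1^+-x_j^+|)^{-\gamma}+\sum_j(\lambda|x_1^+-x_j^-|)^{-\gamma}=o(1)$ after the diagonal rescaling; (iii) the correction $\varphi$, whose effect in every test function is $O(\|\varphi\|_*)=O(\lambda^{-(2s+1)/2-\epsilon})$ by Proposition \ref{prop2.5}. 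Combining these, the properly rescaled matrix becomes $\mathrm{diag}(d_1,\dots,d_N)+o(1)$ with every $d_l\ne 0$, hence invertible for all large $k$, forcing $c_l=0$ for every $l$.

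The main technical obstacle is the $+/-$ cross interaction. Because $\bar h\in[M_0 k^{-(N-2s-1)/(N-2s+1)},M_1 k^{-(N-2s-1)/(N-2s+1)}]$, the antipodal bubbles $x_j^+$ and $x_j^-$ are close at distance $\sim\bar r\bar h$, so the extra $(\lambda\bar h)^{-\gamma}$ term appearing in estimates like \eqref{a2.14.1} is non-negligible and feeds into several off-diagonal entries of $M$; controlling it is precisely where the sharp choice $\tau=(N-4s)/(2(N-2s))$ and the restriction \eqref{s} on $s$, as flagged in Remark \ref{rema1.3.1}, come into play. Once that is in hand, the invertibility argument and the conclusion $c_l=0$ are mechanical.
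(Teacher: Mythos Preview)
Your plan is correct and matches the paper's approach: subtract the full Pohozaev identities \eqref{3.2}--\eqref{3.3} from the hypotheses \eqref{3.4}--\eqref{3.5}, combine with \eqref{3.6a}--\eqref{3.6}, and exploit the bubble-wise orthogonality of the kernels together with Lemma~\ref{lemA.4} and the bound $\|\varphi\|_*\le C\lambda^{-(2s+1)/2-\epsilon}$ to obtain a diagonally dominant system; the paper carries this out as a sequential elimination (first $c_l=o(\lambda^{-2}|c_1|)$ for $l\ge 2$, then $c_1=0$ from the $\lambda$-row) rather than an explicit matrix inversion, but the content is the same. One minor correction: the restriction \eqref{s} is not invoked in this step---the $+/-$ cross interactions here are controlled directly by Lemma~\ref{lemA.4}, and \eqref{s} enters only upstream through Proposition~\ref{prop2.5}.
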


\begin{proof}
If \eqref{3.4} and \eqref{3.5} hold, then it follows from \eqref{3.2} and \eqref{3.3} that
\begin{equation}\label{a3.7}
  \sum\limits_{l=1}^N c_l\sum\limits_{j=1}^k\int_{B_\rho}\big(Z_{x_j^+,\lambda}^{2_s^*-2}Z_{j,l}^+ +Z_{x_j^-,\lambda}^{2_s^*-2}Z_{j,l}^-\big)\frac{\partial u_k}{\partial y_i}=0,\quad i=4,\cdots,N,
\end{equation}
and
\begin{equation}\label{a3.8}
  \sum\limits_{l=1}^N c_l\sum\limits_{j=1}^k\int_{B_\rho}\big(Z_{x_j^+,\lambda}^{2_s^*-2}Z_{j,l}^+ +Z_{x_j^-,\lambda}^{2_s^*-2}Z_{j,l}^-\big)\langle\nabla u_k,y\rangle=0.
\end{equation}
Note that $Z_{\bar{r},\bar{h},\bar{y}'',\lambda}=0$ in $\R^N\backslash B_\rho$, which together with  (\ref{3.6a})-(\ref{a3.8}), yields that
\begin{equation}\label{3.7}
  \sum\limits_{l=1}^N c_l\sum\limits_{j=1}^k\int_{\R^N}\big(Z_{x_j^+,\lambda}^{2_s^*-2}Z_{j,l}^+ +Z_{x_j^-,\lambda}^{2_s^*-2}Z_{j,l}^-\big)v=0
\end{equation}
holds for $v=\frac{\partial u_k}{\partial y_i}$, $v=\langle\nabla u_k,y\rangle$, $v=\frac{\partial Z_{\bar{r},\bar{h},\bar{y}'',\lambda}}{\partial \bar{h}}$ and $v=\frac{\partial Z_{\bar{r},\bar{h},\bar{y}'',\lambda}}{\partial \lambda}$.

Using Lemma \ref{lemA.1}, we can compute, for $i=4,\cdots,N$,
\begin{align*}
  &\int_{\R^N}Z_{x_1^+,\lambda}^{2_s^*-2}Z^+_{1,i}\frac{\partial Z_{\bar{r},\bar{h},\bar{y}'',\lambda}}{\partial y_i}\\
  &\leq\int_{\R^N} \frac{C\lambda^{N+2}}{(1+\lambda|y-x_1^+|)^{N+2s}} \sum\limits_{j=1}^{k}\Big(\frac{1}{(1+\lambda|y-x_j^+|)^{N-2s}}+\frac{1}{(1+\lambda|y-x_j^-|)^{N-2s}}\Big)\\
  &\leq \int_{\R^N}\frac{C\lambda^{N+2}}{(1+\lambda|y-x_1^+|)^{2N}}\\
  &\quad +\sum\limits_{j=2}^{k}\frac{C\lambda^{N+2}}{(\lambda|x_1^+-x_j^+|)^{N-2s}}\int_{\R^N}\Big(\frac{1}{(1+\lambda|y-x_1^+|)^{N+2s}} + \frac{1}{(1+\lambda|y-x_j^+|)^{N+2s}}\Big)\\
  &\quad +\sum\limits_{j=1}^{k}\frac{C\lambda^{N+2}}{(\lambda|x_1^+-x_j^-|)^{N-2s}}\int_{\R^N}\Big(\frac{1}{(1+\lambda|y-x_1^+|)^{N+2s}} + \frac{1}{(1+\lambda|y-x_j^-|)^{N+2s}}\Big)\\
  &=C\lambda^2\Big(\int_{\R^N} \frac{1}{(1+|y|)^{2N}}+o(1)\Big),
\end{align*}
where we used Lemma \ref{lemA.4}.

Then, we can check that
\begin{equation}\label{3.8}
  \sum\limits_{j=1}^k\int_{\R^N}\big(Z_{x_j^+,\lambda}^{2_s^*-2}Z_{j,i}^+ + Z_{x_j^-,\lambda}^{2_s^*-2}Z_{j,i}^-\big)\frac{\partial Z_{\bar{r},\bar{h},\bar{y}'',\lambda}}{\partial y_i}= 2k\lambda^2\big(a_1+o(1)\big),
\end{equation}
for $i=4,\cdots,N$. Similarly we can compute that
\begin{equation}\label{3.9}
  \sum\limits_{j=1}^k\int_{\R^N}\big(Z_{x_j^+,\lambda}^{2_s^*-2}Z_{j,2}^+ + Z_{x_j^-,\lambda}^{2_s^*-2}Z_{j,2}^-\big)\langle y',\nabla_{y'} Z_{\bar{r},\bar{h},\bar{y}'',\lambda}\rangle= 2k\lambda^2\big(a_2+o(1)\big),
\end{equation}
\begin{equation}\label{3.10a}
  \sum\limits_{j=1}^k\int_{\R^N}\big(Z_{x_j^+,\lambda}^{2_s^*-2}Z_{j,3}^+ + Z_{x_j^-,\lambda}^{2_s^*-2}Z_{j,3}^-\big)\frac{\partial Z_{\bar{r},\bar{h},\bar{y}'',\lambda}}{\partial \bar{h}}= 2k\lambda^{2}\big(a_3+o(1)\big),
\end{equation}
and
\begin{equation}\label{3.10}
  \sum\limits_{j=1}^k\int_{\R^N}\big(Z_{x_j^+,\lambda}^{2_s^*-2}Z_{j,1}^+ + Z_{x_j^-,\lambda}^{2_s^*-2}Z_{j,1}^-\big)\frac{\partial Z_{\bar{r},\bar{h},\bar{y}'',\lambda}}{\partial \lambda}= \frac{2k}{\lambda^{2}}\big(a_4+o(1)\big),
\end{equation}
for some constants $a_1, a_2, a_3, a_4>0$.

Since $\varphi$ is a solution to \eqref{2.22}, by fractional elliptical equation estimates (see, for example Proposition 2.9 in \cite{caffarelli2007extension} and Theorem 12.2.1 in \cite{chen2020fractional}), we can obtain $\varphi \in C^1(B_\rho)$. Using integrating by parts and \eqref{2.28}, we have
\begin{equation}\label{3.11}
 \sum\limits_{l=1}^N c_l \sum\limits_{j=1}^k\int_{\R^N}\big(Z_{x_j^+,\lambda}^{2_s^*-2}Z_{j,l}^+ + Z_{x_j^-,\lambda}^{2_s^*-2}Z_{j,l}^-\big)v=o(k\lambda^2)\sum\limits_{l=2}^{N}|c_l|+o(k|c_1|)
\end{equation}
holds for $v=\langle y,\nabla \varphi_{\bar{r},\bar{h},\bar{y}'',\lambda}\rangle$, $v=\frac{\partial \varphi_{\bar{r},\bar{h},\bar{y}'',\lambda}}{\partial \bar{h}}$ and $v=\frac{\partial \varphi_{\bar{r},\bar{h},\bar{y}'',\lambda}}{\partial y_i}$.

It follows from (\ref{3.7}) that
\begin{equation}\label{3.12}
 \sum\limits_{l=1}^N c_l \sum\limits_{j=1}^k\int_{\R^N}\big(Z_{x_j^+,\lambda}^{2_s^*-2}Z_{j,l}^+ + Z_{x_j^-,\lambda}^{2_s^*-2}Z_{j,l}^-\big)v=o(k\lambda^2)\sum\limits_{l=2}^{N}|c_l|+o(k|c_1|)
\end{equation}
holds for $v=\langle y, \nabla Z_{\bar{r},\bar{h},\bar{y}'',\lambda}\rangle$, $v=\frac{\partial Z_{\bar{r},\bar{h},\bar{y}'',\lambda}}{\partial \bar{h}}$ and $v=\frac{\partial Z_{\bar{r},\bar{h},\bar{y}'',\lambda}}{\partial y_i}$.

Since
\begin{equation*}
  \langle y, \nabla Z_{\bar{r},\bar{h},\bar{y}'',\lambda}\rangle = \langle y', \nabla_{y'} Z_{\bar{r},\bar{h},\bar{y}'',\lambda}\rangle + \langle y'', \nabla_{y''} Z_{\bar{r},\bar{h},\bar{y}'',\lambda}\rangle,
\end{equation*}
we obtain
\begin{equation}\label{3.13}
  \begin{split}
 &\sum\limits_{l=1}^N c_l \sum\limits_{j=1}^k\int_{\R^N}\big(Z_{x_j^+,\lambda}^{2_s^*-2}Z_{j,l}^+ + Z_{x_j^-,\lambda}^{2_s^*-2}Z_{j,l}^-\big)\langle y, \nabla Z_{\bar{r},\bar{h},\bar{y}'',\lambda}\rangle\\
 &= c_2\sum\limits_{j=1}^k\int_{\R^N}\big(Z_{x_j^+,\lambda}^{2_s^*-2}Z_{j,2}^+ + Z_{x_j^-,\lambda}^{2_s^*-2}Z_{j,2}^-\big)\langle y', \nabla_{y'} Z_{\bar{r},\bar{h},\bar{y}'',\lambda}\rangle + o(k\lambda^2)\sum\limits_{l=3}^{N}|c_l|+o(k|c_1|).\\
  \end{split}
\end{equation}
Similarly,
\begin{equation}\label{3.14a}
  \begin{split}
  &\sum\limits_{l=1}^N c_l \sum\limits_{j=1}^k\int_{\R^N}\big(Z_{x_j^+,\lambda}^{2_s^*-2}Z_{j,l}^+ + Z_{x_j^-,\lambda}^{2_s^*-2}Z_{j,l}^-\big)\frac{\partial Z_{\bar{r},\bar{h},\bar{y}'',\lambda}}{\partial \bar{h}}\\
 &= c_3\int_{\R^N}\big(Z_{x_j^+,\lambda}^{2_s^*-2}Z_{j,3}^+ + Z_{x_j^-,\lambda}^{2_s^*-2}Z_{j,3}^-\big)\frac{\partial Z_{\bar{r},\bar{h},\bar{y}'',\lambda}}{\partial \bar{h}} + o(k\lambda^2)\sum\limits_{l\neq1,3}|c_l|+o(k|c_1|), \\
  \end{split}
\end{equation}
and for $i=4,\cdots,N$,
\begin{equation}\label{3.14}
  \begin{split}
  &\sum\limits_{l=1}^N c_l \sum\limits_{j=1}^k\int_{\R^N}\big(Z_{x_j^+,\lambda}^{2_s^*-2}Z_{j,l}^+ + Z_{x_j^-,\lambda}^{2_s^*-2}Z_{j,l}^-\big)\frac{\partial Z_{\bar{r},\bar{h},\bar{y}'',\lambda}}{\partial y_i}\\
 &= c_i\sum\limits_{j=1}^k\int_{\R^N}\big(Z_{x_j^+,\lambda}^{2_s^*-2}Z_{j,i}^+ + Z_{x_j^-,\lambda}^{2_s^*-2}Z_{j,i}^-\big)\frac{\partial Z_{\bar{r},\bar{h},\bar{y}'',\lambda}}{\partial y_i} + o(k\lambda^2)\sum\limits_{l\neq1,i}|c_l|+o(k|c_1|). \\
  \end{split}
\end{equation}

According to \eqref{3.12}-\eqref{3.14}, we have
\begin{equation}\label{3.15}
  c_2\sum\limits_{j=1}^k\int_{\R^N}\big(Z_{x_j^+,\lambda}^{2_s^*-2}Z_{j,2}^+ + Z_{x_j^-,\lambda}^{2_s^*-2}Z_{j,2}^-\big)\langle y', \nabla_{y'} Z_{\bar{r},\bar{h},\bar{y}'',\lambda}\rangle = o(k\lambda^2)\sum\limits_{l=3}^{N}|c_l|+o(k|c_1|),
\end{equation}
\begin{equation}\label{3.15b}
  c_3\sum\limits_{j=1}^k\int_{\R^N}\big(Z_{x_j^+,\lambda}^{2_s^*-2}Z_{j,3}^+ + Z_{x_j^-,\lambda}^{2_s^*-2}Z_{j,3}^-\big)\frac{\partial Z_{\bar{r},\bar{h},\bar{y}'',\lambda}}{\partial \bar{h}} = o(k\lambda^2)\sum\limits_{l\neq1,3}|c_l|+o(k|c_1|),
\end{equation}
and for $i=4,\cdots,N$,
\begin{equation}\label{3.16}
  c_i\sum\limits_{j=1}^k\int_{\R^N}\big(Z_{x_j^+,\lambda}^{2_s^*-2}Z_{j,i}^+ + Z_{x_j^-,\lambda}^{2_s^*-2}Z_{j,i}^-\big)\frac{\partial Z_{\bar{r},\bar{h},\bar{y}'',\lambda}}{\partial y_i} = o(k\lambda^2)\sum\limits_{l\neq1,i}|c_l|+o(k|c_1|),
\end{equation}
which, together with \eqref{3.8}-\eqref{3.10a}, yields that
\begin{equation}\label{3.17}
  c_l=o\Big(\frac{1}{\lambda^{2}}|c_1|\Big), \quad l=2,\cdots,N.
\end{equation}
On the other hand, there holds
\begin{equation}\label{3.18}
  \begin{split}
    0 &=\sum\limits_{l=1}^N c_l \sum\limits_{j=1}^k\int_{\R^N}\big(Z_{x_j^+,\lambda}^{2_s^*-2}Z_{j,l}^+ + Z_{x_j^-,\lambda}^{2_s^*-2}Z_{j,l}^-\big)\frac{\partial Z_{\bar{r},\bar{h},\bar{y}'',\lambda}}{\partial \lambda}\notag\\
      &=c_1 \sum\limits_{j=1}^k\int_{\R^N}\big(Z_{x_j^+,\lambda}^{2_s^*-2}Z_{j,1}^+ + Z_{x_j^-,\lambda}^{2_s^*-2}Z_{j,1}^-\big)\frac{\partial Z_{\bar{r},\bar{h},\bar{y}'',\lambda}}{\partial \lambda} + o\Big(\frac{k}{\lambda^{2}}\Big)c_1\notag\\
      &=\frac{2k}{\lambda^{2}}\big(a_4+o(1)\big)c_1+o\Big(\frac{k}{\lambda^{2}}\Big)c_1,
  \end{split}
\end{equation}
which implies that $c_1=0$. The proof is completed.
\end{proof}

Next, we will estimate \eqref{3.4} and \eqref{3.5}.
A direct computation gives
\begin{align*}
    &\int_{B_\rho}\Big(-V(r,y'')u_k+(u_k)_+^{2_s^*-1}\Big)\frac{\partial u_k}{\partial y_i}\\
    &=-\frac{1}{2}\int_{\partial B_\rho}V(r,y'')u_k^2 \nu_i + \frac{1}{2}\int_{B_\rho}\frac{\partial V(r,y'')}{\partial y_i}u_k^2
    + \frac{1}{2_s^*}\int_{\partial B_\rho}(u_k)_+^{2_s^*} \nu_i.
\end{align*}
Hence, we have that \eqref{3.4} is equivalent to
\begin{equation}\label{3.20}
  \begin{split}
    \frac{1}{2}\int_{B_\rho}\frac{\partial V(r,y'')}{\partial y_i}u_k^2 &= -\int_{\partial''{B_\rho^+}} t^{1-2s}\frac{\partial\tilde{u}_k}{\partial \nu}\frac{\partial\tilde{u}_k}{\partial y_i} +\frac{1}{2}\int_{\partial''{B_\rho^+}} t^{1-2s}|\nabla\tilde{u}_k|^2\nu_i\\
    &\quad  + \frac{1}{2}\int_{\partial B_\rho}V(r,y'')u_k^2 \nu_i - \frac{1}{2_s^*}\int_{\partial B_\rho}(u_k)_+^{2_s^*} \nu_i,
  \end{split}
\end{equation}
for $i=4,\cdots,N$.

Similarly, from \eqref{3.1} we can obtain
\begin{align*}
    &\int_{\partial{B_\rho^+}} t^{1-2s}\frac{\partial\tilde{u}_k}{\partial \nu}\tilde{u}_k  =\int_{\partial''{B_\rho^+}}t^{1-2s}\frac{\partial\tilde{u}_k}{\partial \nu}\tilde{u}_k \\
    &\qquad + \int_{B_\rho}\Big(-V(r,y'')u_k^2 +(u_k)_+^{2_s^*}+\sum\limits_{l=1}^N
c_l\sum\limits_{j=1}^k\big(Z_{x_j^+,\lambda}^{2_s^*-2}Z_{j,l}^+ +Z_{x_j^-,\lambda}^{2_s^*-2}Z_{j,l}^-\big)\Big)u_k.
\end{align*}
A direct computation gives
\begin{align*}
    &\int_{B_\rho}\Big(-V(r,y'')u_k+(u_k)_+^{2_s^*-1}\Big)\langle \nabla u_k, y\rangle\\
    &=\int_{B_\rho} \Big(-\frac{1}{2}V(r,y'')\langle \nabla u_k^2, y\rangle + \frac{1}{2_s^*}\langle \nabla (u_k)_+^{2_s^*}, y\rangle\Big)\\
    &=-\frac{1}{2}\int_{\partial B_\rho}V(r,y'')u_k^2 \langle y,\nu\rangle + \frac{1}{2}\int_{B_\rho}\Big(NV(r,y'')+\langle \nabla V(r,y''), y\rangle\Big)u_k^2 \\
    &\qquad + \frac{1}{2_s^*}\int_{\partial B_\rho}(u_k)_+^{2_s^*} \langle y,\nu\rangle + \frac{2s-N}{2}\int_{B_\rho}(u_k)_+^{2_s^*},
\end{align*}
which, together with
\begin{equation*}
\sum\limits_{l=1}^N c_l \sum\limits_{j=1}^k\int_{\R^N}\big(Z_{x_j^+,\lambda}^{2_s^*-2}Z_{j,l}^+ + Z_{x_j^-,\lambda}^{2_s^*-2}Z_{j,l}^-\big)\varphi=0,
\end{equation*}
yields that \eqref{3.5} is equivalent to
\begin{equation}\label{3.19}
  \begin{split}
    &\int_{B_\rho}\Big(sV(r,y'')+\frac{1}{2}\langle \nabla V(r,y''), y\rangle\Big)u_k^2 \\
    &= -\int_{\partial''{B_\rho^+}}t^{1-2s}\langle\nabla\tilde{u}_k,Y\rangle\frac{\partial\tilde{u}_k}{\partial \nu}+\frac{1}{2}\int_{\partial''{B_\rho^+}}t^{1-2s}|\nabla\tilde{u}_k|^2\langle Y,\nu\rangle \\
    &\qquad +\frac{2s-N}{2}\int_{\partial''{B_\rho^+}}t^{1-2s}\frac{\partial\tilde{u}_k}{\partial \nu}\tilde{u}_k
     + \frac{1}{2}\int_{\partial B_\rho}V(r,y'')u_k^2 \langle y,\nu\rangle
- \frac{1}{2_s^*}\int_{\partial B_\rho}(u_k)_+^{2_s^*} \langle y,\nu\rangle\\
     &\qquad + \frac{2s-N}{2}\sum\limits_{l=1}^N c_l \int_{B_\rho}\Big(\sum\limits_{j=1}^k\big(Z_{x_j^+,\lambda}^{2_s^*-2}Z_{j,l}^+ +Z_{x_j^-,\lambda}^{2_s^*-2}Z_{j,l}^-\big)\Big)Z_{\bar{r},\bar{h},\bar{y}'',\lambda}\\
     &:=-S_1+S_2+S_3+S_4-S_5+S_6.
  \end{split}
\end{equation}

\begin{lemma}\label{lem3.2}
Relations \eqref{3.19} and \eqref{3.20} are, respectively, equivalent to
\begin{equation}\label{3.21}
  \int_{B_\rho}\Big(sV(r,y'')+\frac{1}{2}\langle \nabla V(r,y''), y\rangle\Big)u_k^2 = O\Big(\frac{k}{\lambda^{2s+\epsilon}}\Big),
\end{equation}
and
\begin{equation}\label{3.22}
  \frac{1}{2}\int_{B_\rho}\frac{\partial V(r,y'')}{\partial y_i}u_k^2 = O\Big(\frac{k}{\lambda^{2s+\epsilon}}\Big), \quad i=4,\cdots,N.
\end{equation}
\end{lemma}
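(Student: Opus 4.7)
The equivalence reduces to showing that every term on the right-hand side of \eqref{3.19} other than the $V$-integral, namely $S_1, S_2, S_3, S_4, S_5, S_6$, and analogously the four boundary terms on the right-hand side of \eqref{3.20}, is of order $O(k/\lambda^{2s+\epsilon})$. The key geometric input is that the Pohozaev identities are taken on a fixed ball $B_\rho$ with $\rho \in (2\sigma, 5\sigma)$, whereas all concentration points $x_j^\pm$ satisfy $|x_j^\pm - (r_0, y_0'')| \to 0$ as $k \to \infty$. Hence there exists a constant $c_0 > 0$ with $|y - x_j^\pm| \geq c_0$ for every $y \in \partial B_\rho$ and every $j$, and similarly $|(y,t) - (x_j^\pm, 0)| \geq c_0$ for $(y,t) \in \partial'' B_\rho^+$, so the bubbles and their derivatives are strongly suppressed on these sets.

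For the boundary terms $S_4, S_5$ and the two $\partial B_\rho$-integrals in \eqref{3.20}, I would use the pointwise estimate
\begin{equation*}
|u_k(y)| \leq \sum_{j=1}^{k} \bigl(U_{x_j^+, \lambda}(y) + U_{x_j^-, \lambda}(y)\bigr) + |\varphi(y)| \lesssim \frac{k}{\lambda^{(N-2s)/2}} + \frac{k\,\|\varphi\|_*}{\lambda^{(N-2s)/2 + \tau}}
\end{equation*}
on $\partial B_\rho$, which follows from the distance lower bound and Proposition \ref{prop2.5}. Squaring and integrating yields $|S_4|, |S_5| \lesssim k^2 \lambda^{-(N-2s)}$. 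Since $\lambda \sim k^{(N-2s)/(N-4s)}$, a direct check shows $k^2 \lambda^{-(N-2s)} = O(k / \lambda^{2s + \epsilon})$ for $N \geq 5$. The terms $S_1, S_2, S_3$ and the two $\partial'' B_\rho^+$-integrals in \eqref{3.20} are handled analogously using the Poisson representation $\tilde u_k = \mathcal{P}_s[u_k]$: on the region bounded away from $(x_j^\pm, 0)$, the extensions $\tilde U_{x_j^\pm, \lambda}$ together with their first derivatives admit the same $\lambda^{-(N-2s)/2}$-type decay, and the corresponding bound for $\tilde \varphi$ and $\nabla \tilde \varphi$ follows from Caffarelli--Silvestre regularity applied to the equation for $\tilde \varphi$ combined with $\|\varphi\|_*$. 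Multiplying by the (bounded) factor $t^{1-2s}$ on the compact set $\partial'' B_\rho^+$ and integrating produces the same $O(k^2 \lambda^{-(N-2s)})$ estimate.

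To treat $S_6$, I would invoke the bound $|c_l| \leq C \lambda^{-(2s+1)/2 - n_l - \epsilon}$ from Proposition \ref{prop2.5} together with the interaction estimate
\begin{equation*}
\Bigl|\sum_{j=1}^k \int_{B_\rho} \bigl(Z_{x_j^+, \lambda}^{2_s^* - 2} Z_{j,l}^+ + Z_{x_j^-, \lambda}^{2_s^* - 2} Z_{j,l}^-\bigr) Z_{\bar r, \bar h, \bar y'', \lambda}\Bigr| \lesssim k \lambda^{n_l},
\end{equation*}
which follows from the same bubble-interaction computation as \eqref{3.8}--\eqref{3.10} (and invokes Lemma \ref{lemA.4}), yielding $|S_6| \lesssim k \lambda^{-(2s+1)/2 - \epsilon} = O(k / \lambda^{2s + \epsilon})$. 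The main technical nuisance lies in the $S_1, S_2, S_3$ estimates, where one must control derivatives of the extension $\tilde \varphi$ from only the weighted-norm bound on $\varphi$; this is handled by the interior regularity of $s$-harmonic extensions together with the Poisson kernel representation on the region where $(y,t)$ is separated from every $(x_j^\pm, 0)$, where the kernel is smooth and its derivatives integrate against $\varphi$ with the desired decay rate. Once these bounds are in place, inserting them into \eqref{3.19} and \eqref{3.20} directly produces \eqref{3.21} and \eqref{3.22}.
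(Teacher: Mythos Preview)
Your overall strategy is right, but there is a concrete arithmetic gap in the $S_6$ estimate and a genuine technical omission in the $\partial'' B_\rho^+$ terms.

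For $S_6$: your bound $|S_6| \lesssim k\lambda^{-(2s+1)/2 - \epsilon}$ is \emph{not} $O(k/\lambda^{2s+\epsilon})$ when $s > \tfrac12$, since $(2s+1)/2 < 2s$ in that range --- and the hypothesis \eqref{s} forces $s > \tfrac12$ for every admissible $N$. The paper closes this by observing that the interaction integral is not merely $O(k\lambda^{n_l})$ but $O(k\lambda^{n_l}/\lambda^s)$: the diagonal piece $\int Z_{x_j^+,\lambda}^{2_s^*-1}Z_{j,l}^+$ is essentially a parameter-derivative of the scale-invariant quantity $\int U^{2_s^*}$ and hence vanishes up to cutoff errors, while the off-diagonal pieces carry the usual bubble-interaction decay. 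Combining this sharper bound with $|c_l| \leq C\lambda^{-(2s+1)/2 - n_l - \epsilon}$ gives $|S_6| = O(k/\lambda^{(4s+1)/2 + \epsilon}) = O(k/\lambda^{2s+\epsilon})$. Your naive bound on the integral, modeled on \eqref{3.8}--\eqref{3.10} (which concern derivatives of $Z$, not $Z$ itself), misses the diagonal cancellation and is too crude by a factor $\lambda^s$.

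For the $\partial'' B_\rho^+$ integrals ($S_1, S_2, S_3$ and their analogues in \eqref{3.20}): appealing to ``Caffarelli--Silvestre regularity'' and Poisson-kernel smoothness does not directly control $\nabla\tilde{\varphi}$ at points of $\partial'' B_\rho^+$ where $t$ is small, because the Poisson kernel and its derivatives are singular as $t\to 0$. The paper handles this by splitting $\tilde{u}_k = \tilde{Z}_{\bar{r},\bar{h},\bar{y}'',\lambda} + \tilde{\varphi}$, using pointwise decay (Lemma~\ref{lemA.7}) for the first piece, and invoking an averaging argument (Lemma~\ref{lemA.8}) for the second: one bounds the $(2\sigma,5\sigma)$-average in $\rho$ of $\int_{\partial'' B_\rho^+} t^{1-2s}|\nabla\tilde{\varphi}|^2\,dS$ by a volume integral controlled via $\|\varphi\|_*$, and then \emph{chooses} $\rho$ so that the slice integral inherits the bound. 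Without this device (or an equivalent substitute), the estimate for $\nabla\tilde{\varphi}$ on the full hemisphere $\partial'' B_\rho^+$ is unjustified.

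A minor remark on $S_4, S_5$: the paper exploits that $Z_{\bar{r},\bar{h},\bar{y}'',\lambda}$ vanishes on $\partial B_\rho$ (since $\rho > 2\sigma$ lies outside the support of $\eta$), so $u_k|_{\partial B_\rho} = \varphi$ and the estimate reduces immediately to $\|\varphi\|_*$. Your cruder bound via $\sum_j U_{x_j^\pm,\lambda}$ happens to produce the same final order, so this is not a gap --- just a missed simplification.
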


\begin{proof}
Here, we only give the proof for \eqref{3.21}, the proof of \eqref{3.22} is similar. We will deal with the terms in the right-hand side of \eqref{3.19} one by one.

For the first term $S_1$, noting that $\tilde{u}_k=\tilde{Z}_{\bar{r},\bar{h},\bar{y}'',\lambda}+\tilde{\varphi}$, we have
\begin{equation}\label{a3.22}
  \begin{split}
    &\int_{\partial''{B_\rho^+}}t^{1-2s}\langle\nabla\tilde{u}_k,Y\rangle\frac{\partial\tilde{u}_k}{\partial \nu} \\ &=\int_{\partial''{B_\rho^+}}t^{1-2s}\langle\nabla\tilde{Z}_{\bar{r},\bar{h},\bar{y}'',\lambda},Y\rangle\frac{\partial\tilde{Z}_{\bar{r},\bar{h},\bar{y}''}}{\partial \nu} +\int_{\partial''{B_\rho^+}}t^{1-2s}\langle\nabla\tilde{\varphi},Y\rangle\frac{\partial\tilde{\varphi}}{\partial \nu} \\ &\qquad +\int_{\partial''{B_\rho^+}}t^{1-2s}\langle\nabla\tilde{Z}_{\bar{r},\bar{h},\bar{y}'',\lambda},Y\rangle\frac{\partial\tilde{\varphi}}{\partial \nu} +\int_{\partial''{B_\rho^+}}t^{1-2s}\langle\nabla\tilde{\varphi},Y\rangle\frac{\partial\tilde{Z}_{\bar{r},\bar{h},\bar{y}'',\lambda}}{\partial \nu}.
  \end{split}
\end{equation}
Next, we will estimate the terms in \eqref{a3.22} one by one.

Using Lemma \ref{lemA.7}, we obtain
\begin{align}\label{3.39A}
    &\Big|\int_{\partial''{B_\rho^+}}t^{1-2s}\langle\nabla\tilde{Z}_{\bar{r},\bar{h},\bar{y}'',\lambda},Y\rangle\frac{\partial\tilde{Z}_{\bar{r},\bar{h},\bar{y}''}}{\partial \nu}\Big|\notag\\
    &\leq\int_{\partial''{B_\rho^+}}t^{1-2s}|\nabla\tilde{Z}_{\bar{r},\bar{h},\bar{y}'',\lambda}|^2\notag\\
    &\leq\frac{C}{\lambda^{N-2s}}\int_{\partial''{B_\rho^+}}t^{1-2s}\Big(\sum\limits_{j=1}^{k}\frac{1}{(1+|y-x^+_j|)^{N-2s+1}}+\sum\limits_{j=1}^{k}\frac{1}{(1+|y-x^-_j|)^{N-2s+1}}\Big)^2\notag\\
    &\leq\frac{Ck^2}{\lambda^{N-2s}}\int_{\partial''{B_\rho^+}}\frac{t^{1-2s}}{(1+|y-x^+_1|)^{2N-4s+2}}
    \leq\frac{Ck^2}{\lambda^{N-2s}}\leq\frac{Ck}{\lambda^{2s+\epsilon}},
\end{align}
since $\frac{N-4s}{N-2s}<N-4s$.
By Lemma \ref{lemA.8}, there holds
\begin{equation}\label{3.39B}
  \begin{split}
    &\Big|\int_{\partial''{B_\rho^+}}t^{1-2s}\langle\nabla\tilde{\varphi},Y\rangle\frac{\partial\tilde{\varphi}}{\partial \nu}\Big|
    \leq C\int_{\partial''{B_\rho^+}}t^{1-2s}|\nabla\tilde{\varphi}|^2
    \leq \frac{Ck\|\varphi\|^2_*}{\lambda^\tau}\leq\frac{Ck}{\lambda^{2s+\epsilon}}.
  \end{split}
\end{equation}

By the process of the proofs of \eqref{3.39A} and \eqref{3.39B}, together with H\"older inequality, we have
\begin{equation}\label{3.39C}
  \begin{split}
  &\Big|\int_{\partial''{B_\rho^+}}t^{1-2s}\langle\nabla\tilde{Z}_{\bar{r},\bar{h},\bar{y}'',\lambda},Y\rangle\frac{\partial\tilde{\varphi}}{\partial \nu}\Big|
  \leq C \int_{\partial''{B_\rho^+}}t^{1-2s}|\nabla\tilde{Z}_{\bar{r},\bar{h},\bar{y}'',\lambda}||\nabla\tilde{\varphi}|\\
  &\leq C \Big(\int_{\partial''{B_\rho^+}}t^{1-2s}|\nabla\tilde{Z}_{\bar{r},\bar{h},\bar{y}'',\lambda}|^2\Big)^{\frac{1}{2}}
  \Big(\int_{\partial''{B_\rho^+}}t^{1-2s}|\nabla\tilde{\varphi}|^2\Big)^{\frac{1}{2}}
  \leq\frac{Ck}{\lambda^{2s+\epsilon}}.
    \end{split}
\end{equation}
Similar to \eqref{3.39C}, we have
\begin{equation}\label{3.39D}
  \Big|\int_{\partial''{B_\rho^+}}t^{1-2s}\langle\nabla \tilde{\varphi} ,Y\rangle\frac{\partial\tilde{Z}_{\bar{r},\bar{h},\bar{y}'',\lambda}}{\partial \nu}\Big|
  \leq C \int_{\partial''{B_\rho^+}}t^{1-2s}|\nabla\tilde{\varphi}||\nabla\tilde{Z}_{\bar{r},\bar{h},\bar{y}'',\lambda}|
  \leq\frac{Ck}{\lambda^{2s+\epsilon}}.
\end{equation}
Hence, we have proved that
\begin{equation}\label{3.39E}
  |S_1|=\Big|\int_{\partial''{B_\rho^+}}t^{1-2s}\langle\nabla\tilde{u}_k,Y\rangle\frac{\partial\tilde{u}_k}{\partial \nu}\Big|\leq \frac{Ck}{\lambda^{2s+\epsilon}}.
\end{equation}

By the same argument as the calculation of \eqref{3.39E}, we can prove
\begin{equation}\label{3.39F}
  |S_2|=\Big|\frac{1}{2}\int_{\partial''{B_\rho^+}}t^{1-2s}|\nabla\tilde{u}_k|^2\langle Y,\nu\rangle\Big|\leq \frac{Ck}{\lambda^{2s+\epsilon}}.
\end{equation}

Next, we estimate the term $S_3$.
\begin{equation*}
  \begin{split}
    \int_{\partial''{B_\rho^+}}t^{1-2s}\frac{\partial\tilde{u}_k}{\partial \nu}\tilde{u}_k
    &=\int_{\partial''{B_\rho^+}}t^{1-2s}\frac{\partial\tilde{Z}_{\bar{r},\bar{h},\bar{y}'',\lambda}}{\partial \nu}\tilde{Z}_{\bar{r},\bar{h},\bar{y}'',\lambda} +\int_{\partial''{B_\rho^+}}t^{1-2s}\frac{\partial\tilde{\varphi}}{\partial \nu}\tilde{\varphi}\\
    &\qquad +\int_{\partial''{B_\rho^+}}t^{1-2s}\frac{\partial\tilde{Z}_{\bar{r},\bar{h},\bar{y}'',\lambda}}{\partial \nu}\tilde{\varphi}+\int_{\partial''{B_\rho^+}}t^{1-2s}\frac{\partial\tilde{\varphi}}{\partial \nu}\tilde{Z}_{\bar{r},\bar{h},\bar{y}'',\lambda}.
  \end{split}
\end{equation*}
By Lemma \ref{lemA.7}, we have
\begin{align*}
   &\Big|\int_{\partial''{B_\rho^+}}t^{1-2s}\frac{\partial\tilde{Z}_{\bar{r},\bar{h},\bar{y}'',\lambda}}{\partial \nu}\tilde{Z}_{\bar{r},\bar{h},\bar{y}'',\lambda} \Big|\\
   &\leq\frac{C}{\lambda^{N-2s}}\int_{\partial''{B_\rho^+}}t^{1-2s}\Big(\sum\limits_{j=1}^{k}\frac{1}{(1+|y-x^+_j|)^{N-2s+1}}+\sum\limits_{j=1}^{k}\frac{1}{(1+|y-x^-_j|)^{N-2s+1}}\Big)\\
   &\quad\times\Big(\sum\limits_{j=1}^{k}\frac{1}{(1+|y-x^+_j|)^{N-2s}}+\sum\limits_{j=1}^{k}\frac{1}{(1+|y-x^-_j|)^{N-2s}}\Big)\\
   &\leq\frac{Ck^2}{\lambda^{N-2s}}\int_{\partial''{B_\rho^+}}\frac{t^{1-2s}}{(1+|y-x^+_1|)^{2N-4s+1}}\\
   &\leq\frac{Ck^2}{\lambda^{N-2s}}\leq\frac{Ck}{\lambda^{2s+\epsilon}}.
\end{align*}

It follows from Lemma \ref{lemA.8} that
\begin{align*}
  \Big|\int_{\partial''{B_\rho^+}}t^{1-2s}\frac{\partial\tilde{\varphi}}{\partial \nu}\tilde{\varphi}\Big|
  &\leq\Big(\int_{\partial''{B_\rho^+}}t^{1-2s}|\nabla\tilde{\varphi}|^2\Big)^{\frac{1}{2}}\Big(\int_{\partial''{B_\rho^+}}t^{1-2s}|\tilde{\varphi}|^2\Big)^{\frac{1}{2}}\\
  &\leq\frac{Ck\|\varphi\|^2_*}{\lambda^\tau}\leq\frac{Ck}{\lambda^{2s+\epsilon}}.
\end{align*}
Similarly, we have
\begin{equation*}
 \Big| \int_{\partial''{B_\rho^+}}t^{1-2s}\frac{\partial\tilde{Z}_{\bar{r},\bar{h},\bar{y}'',\lambda}}{\partial\nu}\tilde{\varphi}\Big|
 \leq\frac{Ck}{\lambda^{2s+\epsilon}},
\end{equation*}
and
\begin{equation*}
 \Big|\int_{\partial''{B_\rho^+}}t^{1-2s}\frac{\partial\tilde{\varphi}}{\partial\nu}\tilde{Z}_{\bar{r},\bar{h},\bar{y}'',\lambda}\Big|
 \leq\frac{Ck}{\lambda^{2s+\epsilon}}.
\end{equation*}
So, we can obtain that
\begin{equation*}
  |S_3|=\Big|\frac{2s-N}{2}\int_{\partial''{B_\rho^+}}t^{1-2s}\frac{\partial\tilde{u}_k}{\partial \nu}\tilde{u}_k\Big|\leq \frac{Ck}{\lambda^{2s+\epsilon}}.
\end{equation*}

Next, we estimate the terms $S_4$ and $S_5$. Since $u_k|_{\partial B_\rho}=\varphi$, we deduce that
\begin{equation*}
  \begin{split}
  &|S_4|=\Big|\frac{1}{2}\int_{\partial B_\rho}V(r,y'')u_k^2 \langle y,\nu\rangle\Big|\\
  &\leq C\|\varphi\|^2_* \int_{\partial B_\rho}\Big(\sum\limits_{j=1}^{k}\big(\frac{\lambda^{\frac{N-2s}{2}}}{(1+\lambda|y-x_j^+|)^{\frac{N-2s}{2}+\tau}}+\frac{\lambda^{\frac{N-2s}{2}}}{(1+\lambda|y-x_j^-|)^{\frac{N-2s}{2}+\tau}}\big)\Big)^2\\
  &\leq \frac{Ck^2\|\varphi\|^2_*}{\lambda^{2\tau}} \leq \frac{Ck}{\lambda^{2s+\epsilon}}.
  \end{split}
\end{equation*}

Similarly, we have
\begin{equation*}
  |S_5|=\Big|\frac{1}{2_s^*}\int_{\partial B_\rho}(u_k)_+^{2_s^*} \langle y,\nu\rangle\Big|
\leq \frac{Ck^{2_s^*}\|\varphi\|^{2_s^*}_*}{\lambda^{{2_s^*}\tau}} \leq \frac{Ck}{\lambda^{2s+\epsilon}}.
\end{equation*}

Finally, we estimate the term $S_6$. Note that
\begin{equation*}
  \begin{split}
&\sum\limits_{j=1}^k\int_{B_\rho}Z_{x_j^+,\lambda}^{2_s^*-2}Z_{j,l}^+ Z_{\bar{r},\bar{h},\bar{y}'',\lambda}\\
&=\sum\limits_{j=1}^k\int_{B_\rho}Z_{x_j^+,\lambda}^{2_s^*-1}Z_{j,l}^+ +\sum\limits_{j=1}^k\int_{B_\rho}\sum\limits_{i\neq j}Z_{x_j^+,\lambda}^{2_s^*-2}Z_{j,l}^+ Z_{x_i^+,\lambda}\\
&\quad +\sum\limits_{j=1}^k\sum\limits_{i=1}^k\int_{B_\rho}Z_{x_j^+,\lambda}^{2_s^*-2}Z_{j,l}^+ Z_{x_i^-,\lambda}\\
&=O\Big(\frac{k\lambda^{n_l}}{\lambda^{s}}\Big).
  \end{split}
\end{equation*}
Similarly, we have
\begin{equation*}
\sum\limits_{j=1}^k\int_{B_\rho}Z_{x_j^-,\lambda}^{2_s^*-2}Z_{j,l}^- Z_{\bar{r},\bar{h},\bar{y}'',\lambda}
=O\Big(\frac{k\lambda^{n_l}}{\lambda^{s}}\Big),
\end{equation*}
which, together with Proposition \ref{prop2.5}, yields that
\begin{equation*}
  |S_6|=\Big|\frac{2s-N}{2}\sum\limits_{l=1}^N c_l \int_{B_\rho}\Big(\sum\limits_{j=1}^k\big(Z_{x_j^+,\lambda}^{2_s^*-2}Z_{j,l}^+ +Z_{x_j^-,\lambda}^{2_s^*-2}Z_{j,l}^-\big)\Big)Z_{\bar{r},\bar{h},\bar{y}'',\lambda}\Big|\leq \frac{Ck}{\lambda^{2s+\epsilon}}.
\end{equation*}
Combining the above estimates, we have that \eqref{3.19} is equivalent to
\begin{equation*}
  \int_{B_\rho}\Big(sV(r,y'')+\frac{1}{2}\langle \nabla V(r,y''), y\rangle\Big)u_k^2 = O\Big(\frac{k}{\lambda^{2s+\epsilon}}\Big).
\end{equation*}\vspace{-10pt}
\end{proof}

\begin{lemma}\label{lemB.2}
We have
\begin{equation}\label{B.2}
  \begin{split}
  &\int_{\R^N}\Big((-\Delta)^s u_k+V(r,y'')u_k-(u_k)_+^{2_s^*-1}\Big)\frac{\partial Z_{\bar{r},\bar{h},\bar{y}'',\lambda}}{\partial \lambda}\\
&=k\bigg(-\frac{2sB_1V(\bar{r},\bar{y}'')}{\lambda^{2s+1}}+\frac{(N-2s)B_2k^{N-2s}}{\lambda^{N-2s+1}(\sqrt{1-\bar{h}^2})^{N-2s}}
  +\frac{(N-2s)B_3k}{\lambda^{N-2s+1}\bar{h}^{N-2s-1}\sqrt{1-\bar{h}^2}}\\
  &\quad
  +O\Big(\frac{1}{\lambda^{2s+1+\epsilon}}\Big)\bigg),
  \end{split}
\end{equation}
where $B_1, B_2$ and $B_3$ are the same positive constants in Lemma \ref{lemB.0}.
\end{lemma}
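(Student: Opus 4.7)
The plan is to substitute $u_k=Z_{\bar r,\bar h,\bar y'',\lambda}+\varphi_{\bar r,\bar h,\bar y'',\lambda}$ and expand the integrand into a ``$Z$-only'' piece plus corrections that are linear, quadratic, and higher-order in $\varphi$. Writing $Z=Z_{\bar r,\bar h,\bar y'',\lambda}$ for brevity, the decomposition I would use is
\begin{align*}
\int_{\R^N}\bigl((-\Delta)^s u_k+Vu_k-(u_k)_+^{2_s^*-1}\bigr)\partial_\lambda Z
&=\underbrace{\int_{\R^N}\bigl((-\Delta)^s Z+VZ-Z^{2_s^*-1}\bigr)\partial_\lambda Z}_{=:I_1} \\
&\quad+\underbrace{\int_{\R^N}\bigl((-\Delta)^s\varphi+V\varphi-(2_s^*-1)Z^{2_s^*-2}\varphi\bigr)\partial_\lambda Z}_{=:I_2}\\
&\quad-\underbrace{\int_{\R^N}\bigl((Z+\varphi)_+^{2_s^*-1}-Z^{2_s^*-1}-(2_s^*-1)Z^{2_s^*-2}\varphi\bigr)\partial_\lambda Z}_{=:I_3}.
\end{align*}

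For the leading contribution $I_1$, I would invoke the identity $l_k=Z^{2_s^*-1}-\sum_j(\eta U^{2_s^*-1}_{x_j^+,\lambda}+\eta U^{2_s^*-1}_{x_j^-,\lambda})-VZ+J_3$ from \eqref{2.24} to rewrite
\[
I_1=\int_{\R^N}\Bigl(\sum_{j=1}^k\bigl(\eta U^{2_s^*-1}_{x_j^+,\lambda}+\eta U^{2_s^*-1}_{x_j^-,\lambda}\bigr)-Z^{2_s^*-1}\Bigr)\partial_\lambda Z-\int_{\R^N}VZ\,\partial_\lambda Z-\int_{\R^N}l_k\,\partial_\lambda Z.
\]
The first term is precisely the kind of bubble-interaction integral treated in Lemma~\ref{lemB.0}: by the $k$-fold symmetry it reduces to $2k$ copies of $\int U^{2_s^*-2}_{x_1^+,\lambda}\bigl(\sum_{j\neq 1}U_{x_j^+,\lambda}+\sum_{j=1}^k U_{x_j^-,\lambda}\bigr)\partial_\lambda U_{x_1^+,\lambda}$. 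The distance estimates from Lemma~\ref{lemA.4}, combined with the identity $\int U^{2_s^*-1}\partial_\lambda U=0$ (which kills self-interaction), yield the two bubble-cluster terms $(N-2s)B_2 k^{N-2s}/[\lambda^{N-2s+1}(\sqrt{1-\bar h^2})^{N-2s}]$ and $(N-2s)B_3 k/[\lambda^{N-2s+1}\bar h^{N-2s-1}\sqrt{1-\bar h^2}]$ (the second counts the ``upper-to-lower'' pairs which are governed by $\bar h$). The potential integral $\int VZ\partial_\lambda Z$ localises around each $x_j^\pm$, and $V(x_j^\pm)=V(\bar r,\bar y'')$, producing $-2sB_1 V(\bar r,\bar y'')k/\lambda^{2s+1}$ after a change of variables $y\mapsto x_j^\pm+y/\lambda$ and Taylor expansion of $V$ about $x_j^\pm$. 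The $l_k$-term is absorbed into the remainder using Lemma~\ref{lem2.4} together with $|\partial_\lambda Z|\le C\lambda^{-1}Z$.

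For the error terms $I_2$ and $I_3$, I would proceed as follows. For $I_2$, symmetrising via the self-adjointness of $(-\Delta)^s+V-(2_s^*-1)Z^{2_s^*-2}$ rewrites it as $\int \varphi\,L(\partial_\lambda Z)$. I then decompose $\partial_\lambda Z=\sum_{j}(Z^+_{j,1}+Z^-_{j,1})+E$ where $E$ is supported in the region where $\nabla\eta\neq 0$. The first piece is annihilated in the pairing by the orthogonality condition $\varphi\in\mathbb H$ (after a small correction from the $(2_s^*-2)Z^{2_s^*-2}\varphi$ commutator treated as in Lemma~\ref{lem2.1}), while the cutoff residue $E$ and the commutator are controlled in the $\|\cdot\|_{**}$ norm and paired with $\|\varphi\|_*\le C\lambda^{-(2s+1)/2-\epsilon}$ from Proposition~\ref{prop2.5}. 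For $I_3$, I would use the pointwise bound $|(Z+\varphi)_+^{2_s^*-1}-Z^{2_s^*-1}-(2_s^*-1)Z^{2_s^*-2}\varphi|\leq C(Z^{2_s^*-3}\varphi^2+|\varphi|^{2_s^*-1})$ when $2_s^*>3$ (and $|\varphi|^{2_s^*-1}$ otherwise), and pair it with $|\partial_\lambda Z|\le C\lambda^{-1}Z$; the quadratic smallness of $\|\varphi\|_*^2$ then yields $|I_3|\leq Ck/\lambda^{2s+1+\epsilon}$.

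The main obstacle is the estimate of $I_2$: a naive $\|\cdot\|_*/\|\cdot\|_{**}$ pairing only gives $O(k\lambda^{-s-\epsilon}\cdot\|\varphi\|_*)$ through the sum-weighted bounds, which is an order coarser than what we need. The orthogonality $\int Z^{2_s^*-2}_{x_j^\pm,\lambda}Z^\pm_{j,1}\varphi=0$ must be used in a quantitative way, subtracting off the ``kernel part'' of $\partial_\lambda Z$ before applying the weighted bounds so that only the cutoff-supported residue (which is $O(\lambda^{-(N-2s)/2})$ smaller than $\partial_\lambda Z$ on the relevant region) contributes. Once this sharper bound is in place, combining with the explicit evaluations in Lemma~\ref{lemB.0} (which pin down $B_1, B_2, B_3$) gives \eqref{B.2}.
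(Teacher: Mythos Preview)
Your three-term decomposition $I_1+I_2-I_3$ is exactly the paper's split into $\langle I'(Z),\partial_\lambda Z\rangle+2kH_1-H_2$, and your $I_3$ treatment matches the paper's $H_2$ estimate. Two points, however, need repair.

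For $I_1$: observe that $I_1=\langle I'(Z),\partial_\lambda Z\rangle=\partial_\lambda I(Z)$, so the whole term is already computed in Lemma~\ref{lemB.1}. Your displayed rewriting of $I_1$ is algebraically off: the sign on the $VZ$ integral is reversed, and what you call the ``$l_k$-term'' overlaps with the other two pieces --- in fact $(-\Delta)^s Z+VZ-Z^{2_s^*-1}=-l_k$ exactly, so $I_1=-\int l_k\,\partial_\lambda Z$, and splitting off an additional $l_k$ remainder after extracting the bubble-interaction and potential pieces double-counts. (Also, a raw $\|l_k\|_{**}$ bound paired with $\lambda^{-1}Z$ only gives $O(k\lambda^{-(2s+3)/2-\epsilon})$, which fails for $s>1/2$.) The clean route is simply to cite Lemma~\ref{lemB.1}.

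For $I_2$: the orthogonality $\varphi\in\mathbb H$ is \emph{not} what makes this term small, and it does not ``annihilate'' the pairing with $\partial_\lambda Z$. The constraint $\int Z_{x_j^\pm,\lambda}^{2_s^*-2}Z_{j,1}^\pm\varphi=0$ is a statement about a particular weighted integral, not about $\int\varphi\,L(Z_{j,1}^\pm)$. The correct mechanism --- which is what underlies estimate \eqref{2.13} --- is that $\partial_\lambda U_{x_j^\pm,\lambda}$ lies in the kernel of $(-\Delta)^s-(2_s^*-1)U_{x_j^\pm,\lambda}^{2_s^*-2}$. After moving the operator onto $\partial_\lambda Z$ by self-adjointness, this forces $L(\partial_\lambda Z)$ to consist only of the cutoff commutator with $\eta$, the discrepancy $(2_s^*-1)(U_{x_j^\pm,\lambda}^{2_s^*-2}-Z^{2_s^*-2})Z_{j,1}^\pm$ coming from bubble interaction, and the potential term $VZ_{j,1}^\pm$; each of these carries an extra decay factor and is then paired against $\|\varphi\|_*$. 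The paper short-circuits this by citing \eqref{2.13}--\eqref{2.14} to get $|H_1|=O(\|\varphi\|_*/\lambda^{s+1})=O(\lambda^{-2s-1-\epsilon})$ with Proposition~\ref{prop2.5}. No appeal to $\varphi\in\mathbb H$ is needed or used here.
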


\begin{proof}
A direct computation gives
\begin{equation*}
  \begin{split}
&\int_{\R^N}\big((-\Delta)^{s}u_k +V(r,y'')u_k -(u_k)_+^{2_s^*-1}\big)\frac{\partial Z_{\bar{r},\bar{h},\bar{y}'',\lambda}} {\partial\lambda}\\
&=\Big\langle I'(Z_{\bar{r},\bar{h},\bar{y}'',\lambda}),\frac{\partial Z_{\bar{r},\bar{h},\bar{y}'',\lambda}} {\partial\lambda}\Big\rangle \\
&\quad+ 2k\Big\langle(-\Delta)^s\varphi+V(r,y'')\varphi-(2_s^*-1)Z_{\bar{r},\bar{h},\bar{y}'',\lambda}^{2_s^*-2}\varphi,\frac{\partial Z_{x_1^+,\lambda}} {\partial\lambda}\Big\rangle\\
&\quad-\int_{\R^N}\big((Z_{\bar{r},\bar{h},\bar{y}'',\lambda}+\varphi)_+^{2_s^*-1}-Z_{\bar{r},\bar{h},\bar{y}'',\lambda}^{2_s^*-1}-(2_s^*-1)Z_{\bar{r},\bar{h},\bar{y}'',\lambda}^{2_s^*-2}\varphi\big)\frac{\partial Z_{\bar{r},\bar{h},\bar{y}'',\lambda}} {\partial\lambda}\\
&=:\Big\langle I'(Z_{\bar{r},\bar{h},\bar{y}'',\lambda}),\frac{\partial Z_{\bar{r},\bar{h},\bar{y}'',\lambda}} {\partial\lambda}\Big\rangle +2kH_1-H_2.
  \end{split}
\end{equation*}

Similar to \eqref{2.14} and \eqref{2.13}, by direct computations we have
\begin{equation}\label{b2.1}
  \begin{split}
|H_1|=O\Big(\frac{\|\varphi\|_*}{\lambda^{s+1}}\Big)
=O\Big(\frac{1}{\lambda^{2s+1+\epsilon}}\Big).
  \end{split}
\end{equation}

Note that
\begin{equation*}
 |(1+t)_+^p-1-pt|\leq
\left\{
 \begin{array}{ll}
     Ct^2, & \hbox{$1<p\leq2$;} \vspace{0.15cm}\\
     C(t^2+|t|^p), & \hbox{$p>2$.}
   \end{array}
 \right.
\end{equation*}
If $2_s^*\leq3$, then we have
\begin{align*}
|H_2|&=\Big|\int_{\R^N}\big((Z_{\bar{r},\bar{h},\bar{y}'',\lambda}+\varphi)_+^{2_s^*-1}-Z_{\bar{r},\bar{h},\bar{y}'',\lambda}^{2_s^*-1}-(2_s^*-1)Z_{\bar{r},\bar{h},\bar{y}'',\lambda}^{2_s^*-2}\varphi\big)\frac{\partial Z_{\bar{r},\bar{h},\bar{y}'',\lambda}} {\partial\lambda}\Big|\\
&\leq C\Big|\int_{\R^N}Z_{\bar{r},\bar{h},\bar{y}'',\lambda}^{2_s^*-3}\varphi^2\frac{\partial Z_{\bar{r},\bar{h},\bar{y}'',\lambda}} {\partial\lambda}\Big|\\
&\leq\frac{C\|\varphi\|_*^2}{\lambda}\int_{\R^N}\bigg(\sum\limits_{j=1}^{k}\Big(\frac{\lambda^{\frac{N-2s}{2}}}{(1+\lambda|y-x_j^+|)^{N-2s}}+\frac{\lambda^{\frac{N-2s}{2}}}{(1+\lambda|y-x_j^-|)^{N-2s}}\Big)\bigg)^{2_s^*-2}\\
&\quad\times\bigg(\sum\limits_{j=1}^{k}\Big(\frac{\lambda^{\frac{N-2s}{2}}}{(1+\lambda|y-x_j^+|)^{\frac{N-2s}{2}+\tau}}+\frac{\lambda^{\frac{N-2s}{2}}}{(1+\lambda|y-x_j^-|)^{\frac{N-2s}{2}+\tau}}\Big)\bigg)^2\\
&\leq\frac{C\|\varphi\|_*^2}{\lambda}\int_{\R^N}\lambda^N\bigg(\sum\limits_{j=1}^{k}\Big(\frac{1}{(1+\lambda|y-x_j^+|)^{4s}}+\frac{1}{(1+\lambda|y-x_j^-|)^{4s}}\Big)\bigg)\\
&\quad\times\bigg(\sum\limits_{j=1}^{k}\Big(\frac{1}{(1+\lambda|y-x_j^+|)^{N-2s+2\tau}}+\frac{1}{(1+\lambda|y-x_j^-|)^{N-2s+2\tau}}\Big)\bigg)\\
&\leq\frac{Ck\|\varphi\|_*^2}{\lambda}\Big(C'+\sum\limits_{j=2}^{k}
\frac{1}{(\lambda|x_1^+-x_j^+|)^{2s+\tau}}+\sum\limits_{j=1}^{k}\frac{1}{(\lambda|x_1^+-x_j^-|)^{2s+\tau}}\Big)\\
&=O\Big(\frac{k}{\lambda^{2s+1+\epsilon}}\Big).
  \end{align*}
Similarly, if $2_s^*>3$, we have,
\begin{align*}
|H_2|\leq C\Big|\int_{\R^N}\big(Z_{\bar{r},\bar{h},\bar{y}'',\lambda}^{2_s^*-3}\varphi^2+\varphi^{2_s^*-1}\big)\frac{\partial Z_{\bar{r},\bar{h},\bar{y}'',\lambda}} {\partial\lambda}\Big|
=O\Big(\frac{k}{\lambda^{2s+1+\epsilon}}\Big).
\end{align*}
As a consequence, we have
\begin{align*}
\Big\langle I'(Z_{\bar{r},\bar{h},\bar{y}'',\lambda}+\varphi),\frac{\partial Z_{\bar{r},\bar{h},\bar{y}'',\lambda}} {\partial\lambda}\Big\rangle
=\Big\langle I'(Z_{\bar{r},\bar{h},\bar{y}'',\lambda}),\frac{\partial Z_{\bar{r},\bar{h},\bar{y}'',\lambda}} {\partial\lambda}\Big\rangle +O\Big(\frac{k}{\lambda^{2s+1+\epsilon}}\Big),
\end{align*}
which, together with Lemma \ref{lemB.1}, yields that \eqref{B.2}. The proof is completed.
\end{proof}

\begin{lemma}\label{lemB.2a}
We have
\begin{equation}\label{B.2a}
  \begin{split}
&\int_{\R^N}\Big((-\Delta)^s u_k+V(r,y'')u_k-(u_k)_+^{2_s^*-1}\Big)\frac{\partial Z_{\bar{r},\bar{h},\bar{y}'',\lambda}}{\partial \bar{h}}\\
&=k\bigg(-\frac{(N-2s)B_2\bar{h}k^{N-2s}}{\lambda^{N-2s}(\sqrt{1-\bar{h}^2})^{N-2s+2}}
    +\frac{(N-2s-1)B_3k}{\lambda^{N-2s}\bar{h}^{N-2s}\sqrt{1-\bar{h}^2}}
    + O\Big(\frac{\bar{h}}{\lambda^{2s+\epsilon}}\Big)\bigg),
  \end{split}
\end{equation}
where $B_2, B_3>0$ are the same positive constants in Lemma \ref{lemB.0}.
\end{lemma}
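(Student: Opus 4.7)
The plan is to follow exactly the template of the proof of Lemma \ref{lemB.2}. I would write $u_k = Z_{\bar{r},\bar{h},\bar{y}'',\lambda} + \varphi$, Taylor-expand $(u_k)_+^{2_s^*-1}$ around $Z_{\bar{r},\bar{h},\bar{y}'',\lambda}$, and decompose the left-hand side of \eqref{B.2a} as
\[
\Big\langle I'(Z_{\bar{r},\bar{h},\bar{y}'',\lambda}), \frac{\partial Z_{\bar{r},\bar{h},\bar{y}'',\lambda}}{\partial\bar{h}}\Big\rangle + \widetilde{H}_1 - \widetilde{H}_2,
\]
where $\widetilde{H}_1 := \langle L\varphi, \partial Z_{\bar{r},\bar{h},\bar{y}'',\lambda}/\partial\bar h\rangle$ with $L\varphi := (-\Delta)^s\varphi + V\varphi - (2_s^*-1)Z_{\bar{r},\bar{h},\bar{y}'',\lambda}^{2_s^*-2}\varphi$, and $\widetilde{H}_2 := \int_{\R^N}[(Z_{\bar{r},\bar{h},\bar{y}'',\lambda}+\varphi)_+^{2_s^*-1} - Z_{\bar{r},\bar{h},\bar{y}'',\lambda}^{2_s^*-1} - (2_s^*-1)Z_{\bar{r},\bar{h},\bar{y}'',\lambda}^{2_s^*-2}\varphi]\,\partial Z_{\bar{r},\bar{h},\bar{y}'',\lambda}/\partial\bar h$. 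The principal term is precisely what Lemma \ref{lemB.0} evaluates in the $\bar h$ direction, producing the two explicit leading contributions in \eqref{B.2a}. What remains is to show $|\widetilde H_1|+|\widetilde H_2|\le Ck\bar h/\lambda^{2s+\epsilon}$.

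For $\widetilde H_1$, I would note that the scaling estimate \eqref{2.9} gives $|Z_{j,3}^\pm|\le C\lambda\, Z_{x_j^\pm,\lambda}$ (i.e.\ $n_3 = 1$), so reusing the chain \eqref{2.14}--\eqref{2.13} verbatim with $t=3$ gives $|\langle L\varphi, Z_{j,3}^\pm\rangle| = O(\lambda\|\varphi\|_*/\lambda^{s+\epsilon})$. Using $\|\varphi\|_*\le C/\lambda^{(2s+1)/2+\epsilon}$ from Proposition \ref{prop2.5}, this by itself is short of the target by a factor $\bar h\lambda^{1/2}$. I would close the gap via the identity $Z_{j,3}^+ + Z_{j,3}^- = \partial_{\bar h}\bigl(Z_{x_j^+,\lambda} + Z_{x_j^-,\lambda}\bigr)$ combined with the key observation that $\bar h\mapsto Z_{x_j^+,\lambda} + Z_{x_j^-,\lambda}$ is \emph{even} in $\bar h$ (since $\bar h\mapsto -\bar h$ interchanges $x_j^+$ and $x_j^-$ while leaving $\eta$ invariant). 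A Taylor expansion around $\bar h = 0$ then writes $Z_{j,3}^+ + Z_{j,3}^- = \bar h\cdot\mathcal{W}_j$, where $\mathcal{W}_j$ obeys the same decay bounds as $\partial_{y_k}Z_{x_j^\pm,\lambda}$. Pairing the $(+,-)$ contributions before applying \eqref{2.14} produces the required factor $\bar h$, hence $|\widetilde H_1|\le Ck\bar h/\lambda^{2s+\epsilon}$.

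For $\widetilde H_2$, the elementary inequality $|(1+t)_+^p - 1 - pt|\le Ct^2$ (for $1<p\le 2$) or $\le C(t^2+|t|^p)$ (for $p>2$) provides the same pointwise bound on the Taylor remainder as in Lemma \ref{lemB.2}. Since $\partial/\partial\bar h$ differs from $\partial/\partial\lambda$ only in its scaling ($\lambda$ versus $1/\lambda$), the integration carries through almost identically: Lemma \ref{lemA.4} handles the interaction sums $\sum_j(\lambda|x_1^+-x_j^\pm|)^{-(2s+\tau)}$, and the $(+,-)$ pair cancellation above again extracts the extra factor $\bar h$, yielding $|\widetilde H_2|\le Ck\bar h/\lambda^{2s+\epsilon}$.

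The principal obstacle throughout will be the extraction of the $\bar h$ prefactor in the error: a direct application of the scaling $n_3 = 1$ in \eqref{2.9} produces an error strictly larger than the first main term of \eqref{B.2a}, so at each step one must carefully exploit the evenness-in-$\bar h$ of every pair $Z_{x_j^+,\lambda} + Z_{x_j^-,\lambda}$ (and of related products with $Z_{\bar{r},\bar{h},\bar{y}'',\lambda}^{2_s^*-3}$ in $\widetilde H_2$) to gain the factor $\bar h$. Once this pair-cancellation mechanism is threaded through the computations of \eqref{2.14} and the Taylor-remainder bound of Lemma \ref{lemB.2}, the conclusion follows by invoking Lemma \ref{lemB.0} for the explicit leading expansion.
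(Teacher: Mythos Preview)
Your overall template matches the paper's exactly: the decomposition into $\langle I'(Z_{\bar r,\bar h,\bar y'',\lambda}),\partial_{\bar h}Z\rangle+\widetilde H_1-\widetilde H_2$, the Taylor-remainder bound for $\widetilde H_2$, and the appeal to the energy expansion. Two points need correction.

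First, the principal term is not supplied by Lemma~\ref{lemB.0} but by Lemma~\ref{lemB.1a}, which computes $\partial I(Z_{\bar r,\bar h,\bar y'',\lambda})/\partial\bar h$ directly (with error control). Formally differentiating the expansion in Lemma~\ref{lemB.0} gives the right leading coefficients, but you still need the estimate on the derivative of the error, and that is exactly what Lemma~\ref{lemB.1a} provides.

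Second, and more substantively: your evenness-in-$\bar h$ observation is correct and equivalent to the paper's use of the spatial $y_3$-evenness, but the claim that the resulting $\mathcal W_j$ ``obeys the same decay bounds as $\partial_{y_k}Z_{x_j^\pm,\lambda}$'' is wrong. Writing $Z_{j,3}^++Z_{j,3}^-=\bar h\,\mathcal W_j$ forces $\mathcal W_j$ to carry a \emph{second} spatial derivative of $U$, hence $|\mathcal W_j|\le C\lambda^{2}U$, not $C\lambda U$. Feeding this into \eqref{2.14}--\eqref{2.13} with the effective exponent $n_t=2$ gives at best $|\widetilde H_1|\le Ck\bar h\lambda^{2}\|\varphi\|_*/\lambda^{(2s+1)/2+\epsilon}=Ck\bar h/\lambda^{2s-1+\epsilon}$, which dominates the main term in \eqref{B.2a}. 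Moreover, \eqref{2.13} relies on the test function being an (approximate) kernel element of the linearized operator; a second derivative of $U$ is not, so the argument there does not transfer. The paper avoids this by first replacing $Z$ by $Z^*$ (incurring the harmless side error $O(k\|\varphi\|_*/\lambda^{(2s+1)/2})$ that you omit) and then decomposing $\partial_{\bar h}Z^*$ via the chain rule into genuine \emph{first}-derivative pieces $\partial_{y_l}U_{x_j^\pm}$: the $y_1,y_2$-components already carry an explicit factor $\bar h$ (from $\partial_{\bar h}\sqrt{1-\bar h^2}=-\bar h/\sqrt{1-\bar h^2}$), while the $y_3$-component is handled by exploiting that $\varphi$ and $Z^*$ are even in $y_3$. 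This keeps the test functions at first-derivative scaling $\lambda$ and in the kernel, so \eqref{2.14}--\eqref{2.13} apply to yield $|\widetilde H_1|=O(k\bar h\|\varphi\|_*/\lambda^{(2s-1)/2})=O(k\bar h/\lambda^{2s+\epsilon})$; the same decomposition is what lets the paper write the bound on $\widetilde H_2$ as $C\bar h\int (Z^*)^{2_s^*-3}\varphi^2\,|\partial_{y_1}Z^*|$.
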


\begin{proof}
A direct computation gives
\begin{equation*}
  \begin{split}
&\int_{\R^N}\big((-\Delta)^{s}u_k +V(r,y'')u_k -(u_k)_+^{2_s^*-1}\big)\frac{\partial Z_{\bar{r},\bar{h},\bar{y}'',\lambda}} {\partial \bar{h}}\\
&=\Big\langle I'(Z_{\bar{r},\bar{h},\bar{y}'',\lambda}),\frac{\partial Z_{\bar{r},\bar{h},\bar{y}'',\lambda}} {\partial\bar{h}}\Big\rangle  \\
&\quad+\Big\langle(-\Delta)^s\varphi+V(r,y'')\varphi-(2_s^*-1)Z_{\bar{r},\bar{h},\bar{y}'',\lambda}^{2_s^*-2}\varphi,\frac{\partial Z_{\bar{r},\bar{h},\bar{y}'',\lambda}} {\partial\bar{h}}\Big\rangle\\
&\quad-\int_{\R^N}\big((Z_{\bar{r},\bar{h},\bar{y}'',\lambda}+\varphi)_+^{2_s^*-1}-Z_{\bar{r},\bar{h},\bar{y}'',\lambda}^{2_s^*-1}-(2_s^*-1)Z_{\bar{r},\bar{h},\bar{y}'',\lambda}^{2_s^*-2}\varphi\big)\frac{\partial Z_{\bar{r},\bar{h},\bar{y}'',\lambda}} {\partial\bar{h}}.
  \end{split}
\end{equation*}

Since $\varphi$ and $Z_{\bar{r},\bar{h},\bar{y}'',\lambda}$ are even in $y_2,y_3$, similar to \eqref{2.14} and \eqref{2.13}, by direct computations we have
\begin{equation}\label{3.36a}
  \begin{split}
&\Big|\Big\langle(-\Delta)^s\varphi+V(r,y'')\varphi-(2_s^*-1)Z_{\bar{r},\bar{h},\bar{y}'',\lambda}^{2_s^*-2}\varphi,\frac{\partial Z_{\bar{r},\bar{h},\bar{y}'',\lambda}} {\partial\bar{h}}\Big\rangle\Big|\\
&=\Big|\langle(-\Delta)^s\varphi+V(r,y'')\varphi-(2_s^*-1)(Z^*_{\bar{r},\bar{h},\bar{y}'',\lambda})^{2_s^*-2}\varphi,\frac{\partial Z^*_{\bar{r},\bar{h},\bar{y}'',\lambda}} {\partial\bar{h}}\Big\rangle\Big|
+O\Big(\frac{k\|\varphi\|_*}{\lambda^{\frac{2s+1}{2}}}\Big)\\
&=O\Big(\frac{k\bar{h}\|\varphi\|_*}{\lambda^{\frac{2s-1}{2}}}\Big)
=O\Big(\frac{k\bar{h}}{\lambda^{2s+\epsilon}}\Big).
  \end{split}
\end{equation}

On the other hand, if $2_s^*\leq3$, then we have
\begin{equation}\label{3.36a.1}
  \begin{split}
&\Big|\int_{\R^N}\big((Z_{\bar{r},\bar{h},\bar{y}'',\lambda}+\varphi)_+^{2_s^*-1}-Z_{\bar{r},\bar{h},\bar{y}'',\lambda}^{2_s^*-1}-(2_s^*-1)Z_{\bar{r},\bar{h},\bar{y}'',\lambda}^{2_s^*-2}\varphi\big)\frac{\partial Z_{\bar{r},\bar{h},\bar{y}'',\lambda}} {\partial\bar{h}}\Big|\notag\\
&=\Bigg|\int_{\R^N}\big((Z^*_{\bar{r},\bar{h},\bar{y}'',\lambda}+\varphi)_+^{2_s^*-1}-(Z^*_{\bar{r},\bar{h},\bar{y}'',\lambda})^{2_s^*-1}-(2_s^*-1)(Z^*_{\bar{r},\bar{h},\bar{y}'',\lambda})^{2_s^*-2}\varphi\big)\\
&\quad\times\frac{\partial Z^*_{\bar{r},\bar{h},\bar{y}'',\lambda}} {\partial\bar{h}}\Bigg|+O\Big(\frac{k\|\varphi\|_*}{\lambda^{\frac{2s+1}{2}}}\Big)\notag\\
&\leq C\bar{h} \Big|\int_{\R^N}(Z^*_{\bar{r},\bar{h},\bar{y}'',\lambda})^{2_s^*-3}\varphi^2\frac{\partial Z^*_{\bar{r},\bar{h},\bar{y}'',\lambda}} {\partial y_1}\Big|+O\Big(\frac{k\|\varphi\|_*}{\lambda^{\frac{2s+1}{2}}}\Big)\notag\\
&\leq C\lambda\bar{h}\|\varphi\|_*^2\int_{\R^N}\bigg(\sum\limits_{j=1}^{k}\Big(\frac{\lambda^{\frac{N-2s}{2}}}{(1+\lambda|y-x_j^+|)^{N-2s}}+\frac{\lambda^{\frac{N-2s}{2}}}{(1+\lambda|y-x_j^-|)^{N-2s}}\Big)\bigg)^{2_s^*-2}\notag\\
&\quad\times\bigg(\sum\limits_{j=1}^{k}\Big(\frac{\lambda^{\frac{N-2s}{2}}}{(1+\lambda|y-x_j^+|)^{\frac{N-2s}{2}+\tau}}+\frac{\lambda^{\frac{N-2s}{2}}}{(1+\lambda|y-x_j^-|)^{\frac{N-2s}{2}+\tau}}\Big)\bigg)^2+O\Big(\frac{k\|\varphi\|_*}{\lambda^{\frac{2s+1}{2}}}\Big)\\
&\leq C\lambda\bar{h}\|\varphi\|_*^2\int_{\R^N}\lambda^N\bigg(\sum\limits_{j=1}^{k}\Big(\frac{1}{(1+\lambda|y-x_j^+|)^{4s}}+\frac{1}{(1+\lambda|y-x_j^-|)^{4s}}\Big)\bigg)\notag\\
&\quad\times\bigg(\sum\limits_{j=1}^{k}\Big(\frac{1}{(1+\lambda|y-x_j^+|)^{N-2s+2\tau}}+\frac{1}{(1+\lambda|y-x_j^-|)^{N-2s+2\tau}}\Big)\bigg)+O\Big(\frac{k\|\varphi\|_*}{\lambda^{\frac{2s+1}{2}}}\Big)\notag\\
&\leq Ck\lambda\bar{h}\|\varphi\|_*^2+O\Big(\frac{k\|\varphi\|_*}{\lambda^{\frac{2s+1}{2}}}\Big)\notag\\
&=O\Big(\frac{k\bar{h}}{\lambda^{2s+\epsilon}}\Big)\notag.
  \end{split}
\end{equation}
Similarly, if $2_s^*>3$, then we have,
\begin{align*}
&\int_{\R^N}\big((Z_{\bar{r},\bar{h},\bar{y}'',\lambda}+\varphi)_+^{2_s^*-1}-Z_{\bar{r},\bar{h},\bar{y}'',\lambda}^{2_s^*-1}-(2_s^*-1)Z_{\bar{r},\bar{h},\bar{y}'',\lambda}^{2_s^*-2}\varphi\big)\frac{\partial Z_{\bar{r},\bar{h},\bar{y}'',\lambda}} {\partial\bar{h}}\\
&\leq C\bar{h}\Big|\int_{\R^N}\big((Z^*_{\bar{r},\bar{h},\bar{y}'',\lambda})^{2_s^*-3}\varphi^2+\varphi^{2_s^*-1}\big)\frac{\partial Z^*_{\bar{r},\bar{h},\bar{y}'',\lambda}} {\partial y_1}\Big|+O\Big(\frac{k\|\varphi\|_*}{\lambda^{\frac{2s+1}{2}}}\Big)\\
&=O\Big(\frac{k\bar{h}}{\lambda^{2s+\epsilon}}\Big).
\end{align*}
As a consequence, we have
\begin{align*}
\Big\langle I'(Z_{\bar{r},\bar{h},\bar{y}'',\lambda}+\varphi),\frac{\partial Z_{\bar{r},\bar{h},\bar{y}'',\lambda}} {\partial\bar{h}}\Big\rangle
=\Big\langle I'(Z_{\bar{r},\bar{h},\bar{y}'',\lambda}),\frac{\partial Z_{\bar{r},\bar{h},\bar{y}'',\lambda}} {\partial \bar{h}}\Big\rangle
+O\Big(\frac{k\bar{h}}{\lambda^{2s+\epsilon}}\Big).
\end{align*}
which, together with Lemma \ref{lemB.1a}, yields that \eqref{B.2a}. The proof is completed.
\end{proof}

\begin{lemma}\label{lem3.3}
For any function $g(r,y'') \in C^1(\R^N)$, there holds
\begin{equation*}
  \int_{B_\rho}g(r,y'')u_k^2 = 2k\Big(\frac{1}{\lambda^{2s}}g(\bar{r},\bar{y}'')\int_{\R^N}U_{0,1}^2+o\big(\frac{1}{\lambda^{2s}}\big)\Big).
\end{equation*}
\end{lemma}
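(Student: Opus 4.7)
The plan is to decompose $u_k^2 = Z_{\bar{r},\bar{h},\bar{y}'',\lambda}^2 + 2\, Z_{\bar{r},\bar{h},\bar{y}'',\lambda}\, \varphi + \varphi^2$ and to show that only the diagonal part of $Z_{\bar{r},\bar{h},\bar{y}'',\lambda}^2$ contributes at leading order. First I would absorb the cross terms involving $\varphi$: using $\|\varphi\|_* \leq C\lambda^{-(2s+1)/2 - \epsilon}$ from Proposition \ref{prop2.5}, together with the pointwise bound $|Z_{\bar{r},\bar{h},\bar{y}'',\lambda}(y)| \lesssim \sum_{j,\pm} \lambda^{(N-2s)/2}/(1+\lambda|y-x_j^\pm|)^{N-2s}$ and the $*$-norm weight controlling $\varphi$, the separation/convolution estimates of Lemmas \ref{lemA.1}--\ref{lemA.4} yield
\[
\Big|\int_{B_\rho} g\, Z_{\bar{r},\bar{h},\bar{y}'',\lambda}\, \varphi\Big| + \int_{B_\rho} |g|\, \varphi^2 \leq \frac{Ck}{\lambda^{2s+\epsilon}} = o\Big(\frac{k}{\lambda^{2s}}\Big).
\]

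Second, I would expand
\[
Z_{\bar{r},\bar{h},\bar{y}'',\lambda}^2 = \sum_{j=1}^k \eta^2\bigl(U_{x_j^+,\lambda}^2 + U_{x_j^-,\lambda}^2\bigr) + \text{off-diagonal},
\]
where the off-diagonal part is a sum of products $\eta^2 U_{x_j^{\pm},\lambda} U_{x_i^{\pm},\lambda}$ with $(i,\pm) \neq (j,\pm)$. Under the regime $\lambda \in [L_0 k^{(N-2s)/(N-4s)}, L_1 k^{(N-2s)/(N-4s)}]$ and $\bar{h} \sim k^{-(N-2s-1)/(N-2s+1)}$, one has $\lambda |x_i^+ - x_j^+| \gtrsim \lambda/k \to \infty$ for $i \neq j$ and $\lambda |x_j^+ - x_j^-| \gtrsim \lambda \bar{h} \to \infty$, so Lemma \ref{lemA.2} combined with Lemma \ref{lemA.4} produces for each pair a factor $(\lambda|x_i^{\pm} - x_j^{\pm}|)^{-\gamma}$ small enough that the total off-diagonal contribution is $o(k/\lambda^{2s})$.

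For each of the $2k$ diagonal integrals I would perform the change of variables $z = \lambda(y - x_j^\pm)$. Since $|(x_j^\pm)'| = \bar{r}$ and $(x_j^\pm)'' = \bar{y}''$, and $x_j^\pm$ lies in the region where $\eta \equiv 1$ for $k$ large, this transforms the integral into
\[
\int_{B_\rho} g(r,y'')\, \eta^2 U_{x_j^\pm,\lambda}^2\, dy = \frac{1}{\lambda^{2s}} \int_{\R^N} g\bigl(|(x_j^\pm)' + z'/\lambda|,\, (x_j^\pm)'' + z''/\lambda\bigr)\, \eta^2(x_j^\pm + z/\lambda)\, U_{0,1}^2(z)\, dz.
\]
By continuity of $g$ and dominated convergence with majorant $\|g\|_\infty U_{0,1}^2 \in L^1(\R^N)$, each diagonal integral equals $\lambda^{-2s} g(\bar{r},\bar{y}'') \int_{\R^N} U_{0,1}^2 + o(\lambda^{-2s})$, and summing over the $2k$ identical contributions combined with the previous estimates delivers the claim.

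The main obstacle is the bookkeeping for the off-diagonal part of $Z_{\bar{r},\bar{h},\bar{y}'',\lambda}^2$: there are $O(k^2)$ cross pairs, so one must extract enough separation decay from $\lambda|x_i^\pm - x_j^\pm|$ to beat that count. This is precisely the role of Lemma \ref{lemA.4} together with the scalings $\lambda \sim k^{(N-2s)/(N-4s)}$ and $\bar{h} \sim k^{-(N-2s-1)/(N-2s+1)}$, both of which make $\lambda/k$ and $\lambda \bar{h}$ tend to infinity as $k \to \infty$.
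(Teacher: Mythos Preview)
Your proposal is correct and follows the standard approach: split $u_k^2$ into the $Z^2$, $Z\varphi$, and $\varphi^2$ pieces, kill the $\varphi$-terms with Proposition~\ref{prop2.5}, kill the off-diagonal bubble interactions via Lemmas~\ref{lemA.1} and~\ref{lemA.4} under the scalings $\lambda\sim k^{(N-2s)/(N-4s)}$, $\bar h\sim k^{-(N-2s-1)/(N-2s+1)}$, and extract the main term from each diagonal piece by rescaling and dominated convergence. The paper itself does not give a proof here but refers to Lemma~3.3 of \cite{guo2020solutions}, whose argument is exactly this; the only cosmetic remark is that $g$ is merely $C^1$, not globally bounded, so your majorant should be $\|g\|_{L^\infty(\overline{B_\rho})}\,U_{0,1}^2$ (which is legitimate since the cutoff $\eta$ keeps the rescaled integrand supported in a fixed compact set).
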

\begin{proof}
The proof is similar to Lemma 3.3 in \cite{guo2020solutions}, here we omit it.
\end{proof}

By \eqref{3.21} and \eqref{3.22}, it is easy to see that
\begin{equation*}
    \int_{B_\rho}\Big(sV(\bar{r},\bar{y}'')+\frac{1}{2}\bar{r}\frac{\partial V(\bar{r},\bar{y}'')}{\partial \bar{r}}\Big)u_k^2 = O\Big(\frac{k}{\lambda^{2s+\epsilon}}\Big).
\end{equation*}
That is
\begin{equation}\label{3.43}
  \int_{B_\rho} \frac{1}{\bar{r}^{2s-1}}\frac{\partial\big(\bar{r}^{2s}V(\bar{r},\bar{y}'')\big)}{\partial \bar{r}} u_k^2 = O\Big(\frac{k}{\lambda^{2s+\epsilon}}\Big).
\end{equation}
Applying Lemma \ref{lem3.3} to \eqref{3.22} and \eqref{3.43}, we obtain
\begin{equation*}
 2k\bigg(\frac{1}{\lambda^{2s}}\frac{\partial V(\bar{r},\bar{y}'')}{\partial \bar{y}_j}\int_{\R^N}U_{0,1}^2+o\Big(\frac{1}{\lambda^{2s}}\Big)\bigg)=O\Big(\frac{k}{\lambda^{2s+\epsilon}}\Big),
\end{equation*}
for $j=4,\cdots,N$ and
\begin{equation*}
 2k\bigg(\frac{1}{\lambda^{2s}}\frac{1}{\bar{r}^{2s-1}}\frac{\partial\big(\bar{r}^{2s}V(\bar{r},\bar{y}'')\big)}{\partial \bar{r}}\int_{\R^N}U_{0,1}^2+o\Big(\frac{1}{\lambda^{2s}}\Big)\bigg)=O\Big(\frac{k}{\lambda^{2s+\epsilon}}\Big).
\end{equation*}
Therefore, the equations to determine $(\bar{r},\bar{y}'')$ are
\begin{equation}\label{3.45}
 \frac{\partial \big(\bar{r}^{2s}V(\bar{r},\bar{y}'')\big)}{\partial \bar{y}''_j}=o(1),\quad j=4,\cdots,N,
\end{equation}
and
\begin{equation}\label{3.44}
 \frac{\partial \big(\bar{r}^{2s}V(\bar{r},\bar{y}'')\big)}{\partial \bar{r}}=o(1).
\end{equation}
Now we will complete the proof of Theorem \ref{theo1.4}.
\begin{proof}[\textbf{Proof of Theorem \ref{theo1.4}.}]
We have proved that \eqref{3.4}-\eqref{3.6} are equivalent to
\begin{equation}\label{3.46}
 \frac{\partial \big(\bar{r}^{2s}V(\bar{r},\bar{y}'')\big)}{\partial \bar{r}}=o(1),
\end{equation}
\begin{equation}\label{3.47}
 \frac{\partial \big(\bar{r}^{2s}V(\bar{r},\bar{y}'')\big)}{\partial \bar{y}''_j}=o(1), \quad j=4,\cdots,N,
\end{equation}
\begin{equation}\label{3.48}
-\frac{(N-2s)B_2\bar{h}k^{N-2s}}{\lambda^{N-2s}(\sqrt{1-\bar{h}^2})^{N-2s+2}}
+\frac{(N-2s-1)B_3k}{\lambda^{N-2s}\bar{h}^{N-2s}\sqrt{1-\bar{h}^2}}\\
= O\Big(\frac{\bar{h}}{\lambda^{2s+\epsilon}}\Big),
\end{equation}
and
\begin{equation}\label{3.49}
-\frac{2sB_1V(\bar{r},\bar{y}'')}{\lambda^{2s+1}}
+\frac{(N-2s)B_2k^{N-2s}}{\lambda^{N-2s+1}(\sqrt{1-\bar{h}^2})^{N-2s}}
=O\Big(\frac{1}{\lambda^{2s+1+\epsilon}}\Big),
\end{equation}
where $B_1,B_2,B_3$ are the same positive constants in Lemma \ref{lemB.0}.

Letting $\bar{h}=t_1k^{-\frac{N-2s-1}{N-2s+1}}$ and $\lambda=t_2k^{\frac{N-2s}{N-4s}}$, then $(t_1,t_2)\in[L_0,L_1]\times[M_0,M_1]$. It follows from \eqref{3.48} and \eqref{3.49} that
\begin{equation}\label{3.50}
  -t_1+\frac{D_1}{t_1^{N-2s}}=o(1), \quad t_1\in[L_0,L_1]
\end{equation}
and
\begin{equation}\label{3.51}
  -\frac{V(\bar{r},\bar{y}'')}{t_2^{2s+1}}+\frac{D_2}{t_2^{N-2s+1}}=o(1),\quad t_1\in[M_0,M_1]
\end{equation}
where $D_1=\frac{(N-2s-1)B_3}{(N-2s)B_2}$, $D_2=\frac{(N-2s)B_2}{2sB_1}$ are some positive constants.

Let
\begin{align*}
  F(t_1,t_2,\bar{r},\bar{y}'')
  =\bigg(\nabla_{\bar{r},\bar{y}''}(\bar{r}^{2s}V(\bar{r},\bar{y}'')),-t_1+\frac{D_1}{t_1^{N-2s}},  -\frac{V(\bar{r},\bar{y}'')}{t_2^{2s+1}}+\frac{D_2}{t_2^{N-2s+1}}\bigg).
\end{align*}
Then, it holds
\begin{align*}
  &\text{deg}\Big(F(t_1,t_2,\bar{r},\bar{y}''),[L_0,L_1]\times[M_0,M_1] \times B_\theta\big((r_0,y''_0)\big)\Big)\\
  &=\text{deg}\Big(\nabla_{\bar{r},\bar{y}''}\big(\bar{r}^{2s}V(\bar{r},\bar{y}'')\big),B_\theta\big((r_0,y''_0)\big)\Big)\neq0.
\end{align*}
Hence, \eqref{3.46}, \eqref{3.47}, \eqref{3.50} and \eqref{3.51} have a solution $t_1^k\times t_2^k\in[L_0,L_1]\times[M_0,M_1]$, $(\bar{r}_k,\bar{y}''_k)\in B_\theta\big((r_0,y''_0)\big)$.
\end{proof}

\appendix
\setcounter{equation}{0}
\renewcommand{\theequation}{A.\arabic{equation}}
\section{Basic estimates}\label{secA}
In this section, we give some essential estimates. For $x_i,x_j,y \in \R^N$, define
\begin{equation*}
  g_{i,j}(y)=\frac{1}{(1+|y-x_i|)^{\kappa_1}}\frac{1}{(1+|y-x_j|)^{\kappa_2}},
\end{equation*}
where $x_i\neq x_j$, $\kappa_1\geq1$ and $\kappa_2\geq1$ are two constants.

\begin{lemma}(Lemma B.1, \cite{wei2010infinitely})\label{lemA.1}
For any constant $0<\varsigma\leq \min\{\kappa_1,\kappa_2\}$, there is a constant $C>0$, such that
\begin{equation*}
  g_{i,j}(y)\leq\frac{C}{|x_i-x_j|^\varsigma}\Big(\frac{1}{(1+|y-x_i|)^{\kappa_1+\kappa_2-\varsigma}}+\frac{1}{(1+|y-x_j|)^{\kappa_1+\kappa_2-\varsigma}}\Big).
\end{equation*}
\end{lemma}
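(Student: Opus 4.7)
The plan is to reduce the pointwise inequality to two symmetric cases by splitting according to which of the distances $|y-x_i|$ and $|y-x_j|$ is larger, and on each piece to use the triangle inequality $|x_i - x_j| \le |y-x_i| + |y-x_j|$ to trade a factor of $(1+|y-x_{\max}|)^{-\varsigma}$ for a factor of $|x_i - x_j|^{-\varsigma}$.

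More concretely, I would first fix the region $\{y : |y-x_i| \le |y-x_j|\}$. On that region the triangle inequality yields $|y-x_j| \ge \tfrac{1}{2}|x_i - x_j|$, and since $(1+t)^{\varsigma} \ge t^{\varsigma}$ for $t \ge 0$, this gives $(1+|y-x_j|)^{\varsigma} \ge 2^{-\varsigma}|x_i-x_j|^{\varsigma}$. Using $\varsigma \le \kappa_2$, I would split
\[
\frac{1}{(1+|y-x_j|)^{\kappa_2}} = \frac{1}{(1+|y-x_j|)^{\kappa_2-\varsigma}} \cdot \frac{1}{(1+|y-x_j|)^{\varsigma}} \le \frac{2^{\varsigma}}{|x_i-x_j|^{\varsigma}} \cdot \frac{1}{(1+|y-x_j|)^{\kappa_2-\varsigma}},
\]
and then absorb the remaining factor into the weight centered at $x_i$ via $1+|y-x_i| \le 1+|y-x_j|$, together with $\kappa_2 - \varsigma \ge 0$, to conclude
\[
g_{i,j}(y) \le \frac{2^{\varsigma}}{|x_i-x_j|^{\varsigma}} \cdot \frac{1}{(1+|y-x_i|)^{\kappa_1+\kappa_2-\varsigma}}.
\]

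The parallel argument on $\{y : |y-x_j| \le |y-x_i|\}$, which instead uses $\varsigma \le \kappa_1$, produces the twin bound with $x_i$ and $x_j$ interchanged. Adding the two one-sided estimates (or simply taking their sum everywhere) yields the claim with $C = 2^{\varsigma}$.

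I do not expect any genuine obstacle here: the only place where the hypothesis $\varsigma \le \min\{\kappa_1,\kappa_2\}$ is really used is in ensuring that the residual exponents $\kappa_1 - \varsigma$ and $\kappa_2 - \varsigma$ remain nonnegative after the trade, and the only subtle point is that the step $(1+|y-x_j|)^{\varsigma} \ge |y-x_j|^{\varsigma}$ is precisely what allows one to produce $|x_i-x_j|^{-\varsigma}$ (rather than $(1+|x_i-x_j|)^{-\varsigma}$) on the right-hand side, with no size restriction on $|x_i-x_j|$.
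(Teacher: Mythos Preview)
Your argument is correct and is precisely the standard proof of this estimate (as in Wei--Yan, Lemma~B.1). The paper does not supply its own proof of this lemma; it merely cites the reference, so there is nothing further to compare.
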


\begin{lemma}(Lemma 2.1, \cite{guo2016infinitely})\label{lemA.2}
For any constant $0<\theta<N-2s$, there exist a constant $C>0$, such that
\begin{equation*}
  \int_{\R^N}\frac{1}{|y-z|^{N-2s}}\frac{1}{(1+|z|)^{2s+\theta}}dz \leq \frac{C}{(1+|y|)^{\theta}}.
\end{equation*}
\end{lemma}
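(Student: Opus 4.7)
The plan is to reduce the claim to a standard Riesz-convolution estimate by splitting $\R^N$ into three regions depending on the relative position of $z$ to $0$ and to $y$. First, for $|y|\leq 2$ the assertion is trivial: the integrand $|y-z|^{-(N-2s)}(1+|z|)^{-(2s+\theta)}$ has an integrable singularity at $z=y$ (because $N-2s<N$) and decays like $|z|^{-(N+\theta)}$ at infinity (because $\theta>0$), so $I(y)$ is uniformly bounded on compact sets, and on $|y|\leq 2$ we have $(1+|y|)^{-\theta}\geq C^{-1}$.

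For $|y|\geq 2$, I will decompose $\R^N=A_1\cup A_2\cup A_3$, where
\[
A_1=\{z:|z-y|\leq |y|/2\},\quad A_2=\{z:|z|\leq |y|/2\},\quad A_3=\R^N\setminus(A_1\cup A_2).
\]
On $A_1$ one has $|z|\sim |y|$, so
\[
\int_{A_1}\frac{dz}{|z-y|^{N-2s}(1+|z|)^{2s+\theta}}
\leq \frac{C}{|y|^{2s+\theta}}\int_{|w|\leq |y|/2}\frac{dw}{|w|^{N-2s}}
\leq \frac{C}{|y|^{\theta}},
\]
using that $\int_{|w|\leq R}|w|^{-(N-2s)}dw\sim R^{2s}$. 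On $A_2$ one has $|z-y|\sim |y|$, so the $A_2$-integral is bounded by $C|y|^{-(N-2s)}\int_{|z|\leq |y|/2}(1+|z|)^{-(2s+\theta)}dz$, which yields $C|y|^{-\theta}$ when $2s+\theta<N$, and $C|y|^{-(N-2s)}\leq C|y|^{-\theta}$ otherwise (since $\theta<N-2s$), with a harmless logarithmic factor in the borderline case $2s+\theta=N$ that can be absorbed by weakening $\theta$ slightly.

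On $A_3$ both $|z-y|\geq |y|/2$ and $|z|\geq |y|/2$; combining these, for any $z\in A_3$ we have $|z-y|\geq \tfrac12\max\{|z|,|y|\}$, so
\[
\int_{A_3}\frac{dz}{|z-y|^{N-2s}(1+|z|)^{2s+\theta}}
\leq C\int_{|z|\geq |y|/2}\frac{dz}{(1+|z|)^{N-2s}(1+|z|)^{2s+\theta}}
= C\int_{|z|\geq |y|/2}\frac{dz}{(1+|z|)^{N+\theta}}
\leq \frac{C}{|y|^{\theta}}.
\]
Summing the three contributions yields $I(y)\leq C(1+|y|)^{-\theta}$ for $|y|\geq 2$, and combining with the easy case $|y|\leq 2$ gives the desired bound.

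The only genuinely delicate step is the borderline $2s+\theta = N$ in the $A_2$ estimate, which produces a logarithm; this is handled by picking any $\theta'\in(\theta,N-2s)$, applying the result at $\theta'$, and dominating $(1+|z|)^{-(2s+\theta')}\leq (1+|z|)^{-(2s+\theta)}$ on $A_2$ after a small rearrangement, or equivalently by noting that $(1+|y|)^{-\theta'}\leq (1+|y|)^{-\theta}$ for $|y|\geq 1$. Apart from this, the proof is a routine case analysis, and no tools beyond elementary integration in polar coordinates are required.
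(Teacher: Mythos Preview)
Your proof is correct. The paper does not supply its own proof of this lemma; it is quoted verbatim from \cite{guo2016infinitely}, so there is nothing in the present paper to compare your argument against. The three-region decomposition you use is the standard way to prove such Riesz-potential bounds and would match what one finds in the cited reference.

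One small simplification: under the standing hypothesis $0<\theta<N-2s$ you automatically have $2s+\theta<N$, so the ``borderline case $2s+\theta=N$'' that you single out in the $A_2$ estimate and return to at the end never actually occurs. The integral $\int_{|z|\leq |y|/2}(1+|z|)^{-(2s+\theta)}dz$ is always $\sim |y|^{N-2s-\theta}$, and the $A_2$ contribution is $\leq C|y|^{-\theta}$ directly, with no logarithm and no need for the auxiliary $\theta'$ argument. Dropping that paragraph tightens the exposition without losing anything.
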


Recall that for $j=1,\cdots,k$,
\begin{equation}\label{A.0}
x_j^\pm=\Big(\bar{r} \sqrt{1-\bar{h} ^2} \cos \frac{2(j-1)\pi}{k}, \bar{r}\sqrt{1-\bar{h}^2}\sin\frac{2(j-1)\pi}{k},\pm\bar{r}\bar{h},\bar{y}''\Big).
\end{equation}

\begin{lemma}\label{lemA.4}
For any $\gamma>0$, we have the following estimates for $k\rightarrow +\infty$,
\begin{equation}\label{A.1}
  \sum\limits_{j=2}^{k}\frac{1}{|x_j^+-x_1^+|^\gamma}
=\left\{
      \begin{array}{ll}
      O\Big(\frac{k^\gamma}{(\bar{r}\sqrt{1-\bar{h}^2})^\gamma}\Big), & \gamma>1; \vspace{0.12cm}\\
      O\Big( \frac{k \text{ln}k}{(\bar{r}\sqrt{1-\bar{h}^2})}\Big), & \gamma=1;\vspace{0.12cm} \\
      O\Big(\frac{k}{(\bar{r}\sqrt{1-\bar{h}^2})^\gamma}\Big) , & \gamma<1,
      \end{array}
    \right.
\end{equation}
and if $\bar{h}=o(1)$, $\frac{1}{\bar{h}k}=o(1)$, then for any $\gamma>1$
\begin{equation}\label{A.2}
  \sum\limits_{j=1}^{k}\frac{1}{|x_j^--x_1^+|^\gamma} = O\Big(\frac{k\bar{h}}{(\bar{r}\bar{h})^\gamma}\Big).
\end{equation}
\end{lemma}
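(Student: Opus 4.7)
The plan is to reduce both sums to elementary calculations on the unit circle. The key observation is that $x_j^+$ and $x_1^+$ share the last $N-2$ coordinates (including the third coordinate $\bar r \bar h$), while the first two lie on a circle of radius $\bar r \sqrt{1-\bar h^2}$, so
\[
|x_j^+ - x_1^+| = 2\bar r\sqrt{1-\bar h^2}\,\Big|\sin\tfrac{(j-1)\pi}{k}\Big|.
\]
Similarly, $x_j^-$ and $x_1^+$ differ in the third coordinate by $2\bar r\bar h$, giving
\[
|x_j^- - x_1^+|^2 = 4(\bar r)^2(1-\bar h^2)\sin^2\tfrac{(j-1)\pi}{k} + 4\bar r^2\bar h^2.
\]

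For \eqref{A.1}, I would use the standard lower bound $|\sin\tfrac{(j-1)\pi}{k}|\ge c\,\min(j-1,k-j+1)/k$ (valid because $\sin$ is concave on $[0,\pi/2]$) together with the symmetry $j \leftrightarrow k-j+2$ of the sum to reduce it to $\tfrac{2}{(\bar r\sqrt{1-\bar h^2})^\gamma}\sum_{j=1}^{\lfloor k/2\rfloor}\bigl(\tfrac{k}{j}\bigr)^\gamma$ up to a constant. The three cases then follow from the behaviour of $\sum_{j=1}^{k/2}j^{-\gamma}$: convergent for $\gamma>1$, harmonic ($\sim\log k$) for $\gamma=1$, and $\sim k^{1-\gamma}/(1-\gamma)$ for $\gamma<1$.

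For \eqref{A.2}, the main idea is to split the sum according to whether the angular separation or the vertical separation $2\bar r\bar h$ dominates. Since $\bar h=o(1)$, there exists an integer $j_*\sim \bar h k$ such that $\sin\tfrac{(j-1)\pi}{k}\lesssim \bar h$ for $j-1\le j_*$ and $\sin\tfrac{(j-1)\pi}{k}\gtrsim \bar h$ for $j_*<j-1\le k/2$ (using again the lower bound on $\sin$, and the hypothesis $1/(\bar h k)=o(1)$ to ensure $j_*\ge 1$). In the first regime, each term is bounded by $C(\bar r\bar h)^{-\gamma}$ and the number of such $j$'s (counting both sides via symmetry) is $O(\bar h k)$, contributing $O(\bar h k/(\bar r\bar h)^\gamma)$. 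In the second regime, $|x_j^--x_1^+|\sim \bar r(j-1)/k$, and the restricted sum $\sum_{j>j_*}^{k/2}(k/(j-1))^\gamma/\bar r^\gamma$ is, for $\gamma>1$, of order $k^\gamma/(\bar r^\gamma (\bar h k)^{\gamma-1})=k/(\bar r^\gamma\bar h^{\gamma-1})=\bar h k/(\bar r\bar h)^\gamma$. Both pieces match the claimed bound.

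The only step that requires genuine care is verifying the split point $j_*\sim \bar h k$ in the proof of \eqref{A.2} and confirming that the two contributions are of the same order (otherwise the bound would not be tight); the rest is bookkeeping. The term $j=1$, namely $|x_1^--x_1^+|=2\bar r\bar h$, is absorbed into the first regime. No cancellation between $x_j^+$ and $x_j^-$ is needed since we are only summing absolute values, so no subtlety from the non-local structure enters here.
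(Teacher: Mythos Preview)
Your treatment of \eqref{A.1} is essentially identical to the paper's: same distance formula, same symmetry reduction to half the range, same linear bound on $\sin$, same trichotomy for $\sum j^{-\gamma}$.

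For \eqref{A.2} your approach is correct but differs from the paper's. You split the sum at a threshold $j_*\sim \bar h k$ separating the ``vertical'' regime $|x_j^--x_1^+|\sim \bar r\bar h$ from the ``angular'' regime $|x_j^--x_1^+|\sim \bar r(j-1)/k$, and estimate each piece separately. The paper instead factors out $(2\bar r\bar h)^\gamma$ to write the sum as
\[
\frac{2}{(2\bar r\bar h)^\gamma}\sum_{j=1}^{(k+1)/2}\Big(\tfrac{1-\bar h^2}{\bar h^2}\sin^2\tfrac{(j-1)\pi}{k}+1\Big)^{-\gamma/2}
\]
and bounds the remaining sum by a Riemann sum for $\int_0^\infty (1+z^2)^{-\gamma/2}\,dz$ with mesh $\sim 1/(\bar h k)$, giving the factor $\bar h k$ directly. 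Your splitting argument is more hands-on and makes the competition between the two length scales transparent; the paper's integral comparison is shorter and has the side benefit of producing the explicit leading constant (the integral $\int_0^\infty(1+z^2)^{-\gamma/2}dz$) that is reused immediately afterwards in Lemma~\ref{lemA.4.4.5} to identify $A_2$ and $A_3$. For the pure $O$-statement here, both routes are equally valid.
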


\begin{proof}
From \eqref{A.0}, we have $|x_j^+-x_1^+|=2\bar{r} \sqrt{1-\bar{h} ^2}\sin\frac{(j-1)\pi}{k}$ for $j=2,\cdots,k$. Without loss of generality, we can assume $k$ is odd. For any $\gamma>0$, it holds
\begin{align*}
  \sum\limits_{j=2}^{k}\frac{1}{|x_j^+-x_1^+|^\gamma}
&= \frac{1}{(2\bar{r} \sqrt{1-\bar{h} ^2})^\gamma}\sum\limits_{j=2}^{k}\frac{1}{(\sin\frac{(j-1)\pi}{k})^\gamma}\\
&= \frac{2}{(2\bar{r} \sqrt{1-\bar{h} ^2})^\gamma}\sum\limits_{j=2}^{\frac{k+1}{2}}\frac{1}{(\sin\frac{(j-1)\pi}{k})^\gamma}.
\end{align*}
Since there exist two constants $c_1, c_2>0$, such that
\begin{equation*}
  0<c_1\frac{(j-1)\pi}{k}\leq\sin\frac{(j-1)\pi}{k}\leq c_2\frac{(j-1)\pi}{k},\quad \text{for~} j=2,\cdots,\frac{k+1}{2},
\end{equation*}
we obtain \eqref{A.1}.

Similarly, we have $|x_j^--x_1^+|^2=4\bar{r}^2 \Big((1-\bar{h} ^2)\sin^2\frac{(j-1)\pi}{k}+\bar{h}^2\Big)$ for $j=1,\cdots,k$. For any $\gamma>1$, it holds
\begin{align*}
  \sum\limits_{j=1}^{k}\frac{1}{|x_j^--x_1^+|^\gamma}
  &= \frac{2}{(2\bar{r}\bar{h})^\gamma}\sum\limits_{j=1}^{\frac{k+1}{2}}\frac{1}{(\frac{1-\bar{h}^2}{\bar{h}^2}\sin^2\frac{(j-1)\pi}{k}+1)^{\frac{\gamma}{2}}}\\
  &\leq\frac{2}{(2\bar{r}\bar{h})^\gamma}\frac{\bar{h}k}{\pi\sqrt{1-\bar{h}^2}}\int_0^{+\infty}\frac{1}{(1+z^2)^\frac{\gamma}{2}}dz   =O\Big(\frac{k\bar{h}}{(\bar{r}\bar{h})^\gamma}\Big).
\end{align*}
Then we have \eqref{A.2}.
\end{proof}

In particular, we have

\begin{lemma}\label{lemA.4.4.5}
Assume that $N>4s+1$ and $\bar{h},\bar{r}$ satisfies \eqref{1.7}. Then as $k\to \infty$
\begin{equation}\label{a4.3.1}
\sum\limits_{j=2}^{k}\frac{1}{|x_j^+-x_1^+|^{N-2s}}=\frac{A_1k^{N-2s}}{(\bar{r}\sqrt{1-\bar{h}^2})^{N-2s}}\Big(1+O\big(\frac{\ln k}{k^2}\big)\Big),
\end{equation}
and if $\frac{1}{k\bar{h}}=o(1)$, then
\begin{equation}\label{a4.3.a2}
  \begin{split}
  &\sum\limits_{j=1}^{k}\frac{1}{|x_j^--x_1^+|^{N-2s}}\\
  &=\frac{A_2k}{\bar{r}^{N-2s}\bar{h}^{N-2s-1}\sqrt{1-\bar{h}^2}}\Big(1+O\big(\frac{1}{\bar{h}k}\big)\Big)\Big)+O\Big(\frac{k^{N-2s-2}\ln k}{(\bar{r}\sqrt{1-\bar{h}^2})^{N-2s}}\Big),
  \end{split}
\end{equation}
and
\begin{equation}\label{a4.3.2}
  \begin{split}
  &\sum\limits_{j=1}^{k}\frac{1}{|x_j^--x_1^+|^{N-2s+2}}\\
  &=\frac{A_3k}{\bar{r}^{N-2s+2}\bar{h}^{N-2s+1}\sqrt{1-\bar{h}^2}}\Big(1+O\big(\frac{1}{\bar{h}k}\big)\Big)+O\Big(\frac{k^{N-2s}\ln k}{(\bar{r}\sqrt{1-\bar{h}^2})^{N-2s+2}}\Big),
  \end{split}
\end{equation}
where
\begin{equation*}
  A_1=\frac{2}{(2\pi)^{N-2s}}\sum\limits_{j=1}^{+\infty}\frac{1}{j^{N-2s}},~ A_2=\frac{2}{2^{N-2s}\pi}\int_0^{+\infty}\frac{1}{(1+t^2)^{\frac{N-2s}{2}}}dt,
\end{equation*}
and $A_3=\frac{(N-2s-1)}{4(N-2s)}A_2$ are some positive constants.
\end{lemma}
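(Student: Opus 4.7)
The plan is to reduce each of the three sums to a Riemann-sum comparison using the explicit distance formulas that appeared in the proof of Lemma \ref{lemA.4}:
\[
|x_j^+-x_1^+| = 2\bar{r}\sqrt{1-\bar{h}^2}\,\sin\tfrac{(j-1)\pi}{k}, \quad |x_j^--x_1^+|^2 = 4\bar{r}^2\bigl((1-\bar{h}^2)\sin^2\tfrac{(j-1)\pi}{k} + \bar{h}^2\bigr).
\]
Throughout I would set $m=j-1$ and exploit the symmetry $\sin\frac{(k-m)\pi}{k}=\sin\frac{m\pi}{k}$ to collapse each sum onto $m\in\{1,\ldots,\lfloor(k-1)/2\rfloor\}$ (with a harmless parity adjustment when $k$ is even).

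For \eqref{a4.3.1}, I would Taylor expand $\sin(m\pi/k) = (m\pi/k)\bigl(1 + O((m/k)^2)\bigr)$, which is sharp for $m\leq \varepsilon k$. The convergent tail $\sum_{m\geq 1}m^{-(N-2s)}$ (convergent because $N-2s>3$ when $N\geq 5$, $s<1$) produces the constant $A_1$, and the second-order Taylor correction together with the region $m>\varepsilon k$ (where each summand is $O(1)$ so the contribution is at most $O(k)$) is absorbed into the relative error $O(\ln k/k^2)$.

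For \eqref{a4.3.a2}, set $\alpha=\bar{h}$, $\beta=\sqrt{1-\bar{h}^2}$ and introduce $t=\beta\sin(m\pi/k)/\alpha$; in the small-$m$ region this change of variable is essentially linear, $t\approx\beta m\pi/(k\alpha)$, with induced mesh $\Delta t \approx \pi\beta/(k\alpha)$. The summand becomes $\alpha^{-(N-2s)}(1+t^2)^{-(N-2s)/2}$ and the Riemann sum converges to
\[
\frac{k\alpha}{\pi\beta\,\alpha^{N-2s}}\int_0^{\infty}\frac{dt}{(1+t^2)^{(N-2s)/2}} \;=\; \frac{k}{\pi\sqrt{1-\bar{h}^2}\,\bar{h}^{\,N-2s-1}}\int_0^{\infty}\frac{dt}{(1+t^2)^{(N-2s)/2}}.
\]
Doubling by symmetry and multiplying by the outer prefactor $(2\bar{r})^{-(N-2s)}$ recovers the constant $A_2$. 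The Riemann-sum mesh error is of order $\Delta t = O(1/(\bar{h}k))$, which matches the first factor of the stated error. The contribution from $m\gtrsim\varepsilon k$, where $\alpha^2$ becomes negligible compared to $\beta^2\sin^2(m\pi/k)$ and the linear approximation of $\sin$ breaks down, is estimated exactly as in \eqref{a4.3.1} (i.e.\ by the convergent series $\sum m^{-(N-2s)}$) and is absorbed into the absolute additional error $O(k^{N-2s-2}\ln k/(\bar{r}\sqrt{1-\bar{h}^2})^{N-2s})$.

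The argument for \eqref{a4.3.2} is line-for-line identical with $N-2s$ replaced by $N-2s+2$; in particular
\[
A_3 \;=\; \frac{2}{2^{N-2s+2}\pi}\int_0^{\infty}\frac{dt}{(1+t^2)^{(N-2s+2)/2}} \;=\; \frac{N-2s-1}{4(N-2s)}\,A_2,
\]
the last equality following from the Beta evaluation $\int_0^{\infty}(1+t^2)^{-p}\,dt=\frac{\sqrt{\pi}\,\Gamma(p-1/2)}{2\Gamma(p)}$ and the recursion $\Gamma(x+1)=x\Gamma(x)$. The main obstacle in this whole program will be the careful bookkeeping across the three natural regimes $m\lesssim \bar{h}k$ (where the $\bar{h}^2$ term dominates the denominator and the Riemann-sum argument applies directly), $\bar{h}k\ll m\ll k$ (where $\beta^2\sin^2(m\pi/k)$ dominates and one must check that the extended Riemann sum still captures the main term up to the stated error), and $m$ comparable to $k/2$ (where $\sin(m\pi/k)$ is of order one and the contribution must be shown to be absorbed into the stated error). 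Once this three-region decomposition is fixed, each piece reduces to a routine estimate.
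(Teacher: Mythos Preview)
Your proposal is correct and follows essentially the same approach as the paper: both reduce the sums via the explicit distance formulas and the symmetry $\sin\frac{(k-m)\pi}{k}=\sin\frac{m\pi}{k}$, then identify the main term through a Riemann-sum/integral comparison and obtain $A_3=\frac{N-2s-1}{4(N-2s)}A_2$ from the Beta/Gamma recursion $\int_0^\infty(1+z^2)^{-\gamma/2}dz=\frac{\Gamma(1/2)\Gamma((\gamma-1)/2)}{\Gamma(\gamma/2)}$. In fact the paper omits the proofs of \eqref{a4.3.1} and \eqref{a4.3.a2} entirely (deferring to \cite{duan2023doubling}) and only writes out \eqref{a4.3.2}, so your three-regime bookkeeping is more detailed than what the paper records.
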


\begin{proof}
Since the proofs of \eqref{a4.3.1} and \eqref{a4.3.a2} are similar to Lemma A.1 in \cite{duan2023doubling}, we omit it.
Note that for $\gamma\geq 1,$
\begin{equation*}
    \int_0^{+\infty} \frac{1}{(1+z^2)^\frac{\gamma}{2}} \text{d} z =\frac{\Gamma(\frac{1}{2})\Gamma(\frac{\gamma-1}{2})}{\Gamma(\frac{\gamma}{2})},
    \end{equation*}
    where $\Gamma(x)$ is the Gamma function. Then, we have
\begin{align*}
&\sum_{j=1}^k  \frac{1}{|x_{1}^{+}-x_{j}^{-}|^{N-2s+2}}\\
&=\frac{2}{\left(2\bar{r}\bar{h}\right)^{N-2s+2}}\displaystyle\sum_{j=1}^{\left[\frac{k-1}{2}\right]} \frac{1}{\left(\frac{(1-\bar{h}^2)}{\bar{h}^2}\left(\sin\frac{(j-1)\pi}{k}\right)^2+1\right)^{\frac{^{N-2s+2}}{2}}}\\
&=\frac{2}{(2\bar{r}\bar{h})^{N-2s+2}}\frac{\bar{h}k}{\pi\sqrt{1-\bar{h}^2}}\int_0^{+\infty}\frac{1}{(1+z^2)^\frac{{N-2s+2}}{2}}dz\Big(1+O\big(\frac{1}{\bar{h}k}\big)\Big)\\
&\quad +O\Big(\frac{k^{N-2s}\ln k}{(\bar{r}\sqrt{1-\bar{h}^2})^{N-2s+2}}\Big)\\
&=\frac{A_3k}{\bar{r}^{N-2s+2} \bar{h}^{N-2s+1}(\sqrt{1-h^2})}\Big(1+O\big(\frac{1}{\bar{h}k}\big)\Big)+O\Big(\frac{k^{N-2s}\ln k}{(\bar{r}\sqrt{1-\bar{h}^2})^{N-2s+2}}\Big).
\end{align*}
\end{proof}

Similarly, we have
\begin{equation}\label{a4.3.2a}
  \begin{split}
&\sum\limits_{j=1}^{k}\frac{\sin^2(\frac{(j-1)\pi}{k})}{|x_j^--x_1^+|^{N-2s+2}}\\
&=\frac{A_4k}{\bar{r}^{N-2s+2}\bar{h}^{N-2s-1}(\sqrt{1-\bar{h}^2})^{3}}\Big(1+O\big(\frac{1}{\bar{h}k}\big)\Big)+O\Big(\frac{k^{N-2s-2}\ln k}{(\bar{r}\sqrt{1-\bar{h}^2})^{N-2s+2}}\Big),
  \end{split}
\end{equation}
where $A_4=\frac{A_2}{4(N-2s)}$.

\begin{lemma}\label{lemA.4.5}
We have the expansion, for $k \to \infty$, $j=2,\cdots,N$,
\begin{equation}\label{a4.4.1}
  \int_{\R^N}U_{x_1^+,\lambda}^{2_s^*-1}U_{x_j^+,\lambda}=\frac{A_5}{(\lambda|x_j^+ - x_1^+|)^{N-2s}} + O\Big(\frac{1}{(\lambda|x_j^+ - x_1^+|)^{N-\epsilon_0}}\Big),
\end{equation}
and for $j=1,\cdots,N$,
\begin{equation}\label{a4.4.2}
  \int_{\R^N}U_{x_1^+,\lambda}^{2_s^*-1}U_{x_j^-,\lambda}=\frac{A_5}{(\lambda|x_j^+ - x_1^-|)^{N-2s}} + O\Big(\frac{1}{(\lambda|x_j^+ - x_1^-|)^{N-\epsilon_0}}\Big),
\end{equation}
where $A_5=C_N\int_{\R^N}U_{0,1}^{2_s^*-1}$ and $\epsilon_0>0$ is a small constant.
\end{lemma}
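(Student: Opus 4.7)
The plan is to reduce both expansions to the same scalar integral by rescaling, and then perform a standard splitting of the domain of integration into a ``near the origin'', ``near the translation vector'', and ``far'' region.

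First, I would set $\xi=x_{j}^{+}-x_{1}^{+}$ in the first case and $\xi=x_{j}^{-}-x_{1}^{+}$ in the second. Using the scaling identity $U_{x,\lambda}(y)=\lambda^{(N-2s)/2}U_{0,1}(\lambda(y-x))$ and the change of variable $y=x_{1}^{+}+z/\lambda$, a direct exponent count (the powers of $\lambda$ cancel since $(N-2s)(2_{s}^{*}-1)/2+(N-2s)/2-N=0$) gives
\[
\int_{\R^{N}}U_{x_{1}^{+},\lambda}^{2_{s}^{*}-1}U_{x_{j}^{\pm},\lambda}\,dy=\int_{\R^{N}}U_{0,1}^{2_{s}^{*}-1}(z)\,U_{0,1}(z-\lambda\xi)\,dz,
\]
so the integral depends on $\lambda$ and $\xi$ only through $R:=\lambda|\xi|$. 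Under the parameter ranges in \eqref{1.7} we have $R\to\infty$ as $k\to\infty$ in both cases, and the two claims \eqref{a4.4.1} and \eqref{a4.4.2} will then follow from a single scalar asymptotic computation.

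Next I would split the integral according to $\Omega_{\mathrm{I}}=\{|z|\le R^{1-\alpha}\}$, $\Omega_{\mathrm{II}}=\{|z-\lambda\xi|\le R/2\}$, and $\Omega_{\mathrm{III}}$ the complement, with $\alpha=\epsilon_{0}/(2s)$ small. On $\Omega_{\mathrm{I}}$ the estimate $|z-\lambda\xi|=R\bigl(1+O(R^{-\alpha})\bigr)$ yields $U_{0,1}(z-\lambda\xi)=C_{N}R^{-(N-2s)}\bigl(1+O(R^{-\alpha})\bigr)$, and since $\int_{\R^{N}}U_{0,1}^{2_{s}^{*}-1}<\infty$, this region produces the main term $A_{5}R^{-(N-2s)}$ with remainder of order $R^{-(N-2s+\alpha)}$. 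The intermediate annulus $R^{1-\alpha}\le|z|\le R/2$ is absorbed into the error via the tail decay $U_{0,1}^{2_{s}^{*}-1}(z)\lesssim|z|^{-(N+2s)}$. On $\Omega_{\mathrm{II}}$ I would exploit $U_{0,1}^{2_{s}^{*}-1}(z)\lesssim R^{-(N+2s)}$ together with $\int_{|w|\le R/2}U_{0,1}(w)\,dw\lesssim R^{2s}$, contributing $O(R^{-N})$; on $\Omega_{\mathrm{III}}$ both factors decay, giving another $O(R^{-N})$.

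Summing these pieces yields
\[
\int U_{0,1}^{2_{s}^{*}-1}(z)\,U_{0,1}(z-\lambda\xi)\,dz=\frac{A_{5}}{R^{N-2s}}+O\!\left(R^{-(N-\epsilon_{0})}\right),
\]
which is precisely \eqref{a4.4.1}--\eqref{a4.4.2}. The only subtle step is the bookkeeping of the remainder in the intermediate annulus, but the choice $\alpha=\epsilon_{0}/(2s)$ handles this mechanically, so I do not expect a serious obstacle. As a consistency check on the constant, I would verify $A_{5}=C_{N}\int U_{0,1}^{2_{s}^{*}-1}$ by matching the $|y|^{-(N-2s)}$ tail of $U_{0,1}$ with that of the Riesz-potential representation $U_{0,1}=k_{N,s}\,(-\Delta)^{-s}(U_{0,1}^{2_{s}^{*}-1})$ at infinity.
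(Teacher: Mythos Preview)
Your reduction to the scalar integral and the three-region decomposition are both correct and standard; the paper follows the same overall scheme. However, there is a genuine exponent mismatch in your error bookkeeping that prevents you from reaching the stated remainder $O(R^{-(N-\epsilon_0)})$ with $\epsilon_0$ small.

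On $\Omega_{\mathrm{I}}=\{|z|\le R^{1-\alpha}\}$ you correctly obtain $U_{0,1}(z-\lambda\xi)=C_N R^{-(N-2s)}\bigl(1+O(R^{-\alpha})\bigr)$, and hence the contribution of this region is $A_5 R^{-(N-2s)}+O\bigl(R^{-(N-2s+\alpha)}\bigr)$ (plus the truncation error $O(R^{-N+2s\alpha})$ that your choice $\alpha=\epsilon_0/(2s)$ is designed to absorb). The problem is the term $O\bigl(R^{-(N-2s+\alpha)}\bigr)$: in order for it to be $O(R^{-(N-\epsilon_0)})$ you would need $\alpha\ge 2s-\epsilon_0$, which is incompatible with taking $\alpha=\epsilon_0/(2s)$ small. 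In other words, your two competing constraints---$\alpha$ small so that the intermediate annulus and tail give $R^{-(N-\epsilon_0)}$, and $\alpha$ close to $2s$ so that the region-$\mathrm{I}$ remainder gives $R^{-(N-\epsilon_0)}$---cannot be met simultaneously. As written, your argument only produces a remainder $O(R^{-(N-2s)-\alpha})$, i.e.\ $O(R^{-(N-\epsilon_0')})$ with $\epsilon_0'$ close to $2s$, not small.

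The fix, which is exactly what the paper does, is to take the inner cutoff at a \emph{fixed proportion} of $R$ (the paper uses $\lambda d_0$ with $d_0=\tfrac14\min\{|x_2^+-x_1^+|,|x_1^--x_1^+|\}$, so $\lambda d_0\le R/4$) rather than at $R^{1-\alpha}$. On such a ball the perturbation $t:=(1+|z|^2-2\lambda\langle z,\xi\rangle)/R^2$ stays bounded away from $-1$, so you can Taylor-expand $(1+t)^{-(N-2s)/2}$ honestly; the linear-in-$z$ piece integrates to zero by radial symmetry, and the remaining moment integrals $\int_{|z|\le \lambda d_0}(1+|z|^2)^m(1+|z|^2)^{-(N+2s)/2}$ grow only like $(\lambda d_0)^{2m-2s}\le R^{2m-2s}$, which after division by $R^{2m}$ gives $R^{-2s}$. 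This is what pushes the inner remainder down to $O(R^{-N})$ and hence to $O(R^{-(N-\epsilon_0)})$ for any small $\epsilon_0>0$.
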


\begin{proof}
Here we only prove \eqref{a4.4.1} and the proof of \eqref{a4.4.2} is similar. Denote $d_0:=\min\{\frac{|x_2^+-x_1^+|}{4},\frac{|x_1^--x_1^+|}{4}\}$. We have
\begin{align*}
  &\int_{\R^N}U_{x_1^+,\lambda}^{2_s^*-1}U_{x_j^+,\lambda}\\
  &=C^{2^*_s}_N\int_{\R^N}\Big(\frac{\lambda}{1+\lambda^2|y-x^+_1|^2}\Big)^{\frac{N+2s}{2}}\Big(\frac{\lambda}{1+\lambda^2|y-x^+_j|^2}\Big)^{\frac{N-2s}{2}}\\
  &=C^{2^*_s}_N\Big(\int_{B_{{d_0}}(x_1^+)}+\int_{\R^N\backslash B_{{d_0}}(x_1^+)}\Big)\Big(\frac{\lambda}{1+\lambda^2|y-x^+_1|^2}\Big)^{\frac{N+2s}{2}}\Big(\frac{\lambda}{1+\lambda^2|y-x^+_j|^2}\Big)^{\frac{N-2s}{2}}\\
  &=C^{2^*_s}_N\int_{B_{{ \lambda d_0}}(0)}\frac{1}{(1+z^2)^{\frac{N+2s}{2}}}\frac{1}{(1+z^2-2\lambda\langle z,x_j^+-x_1^+\rangle+\lambda^2|x_j^+-x_1^+|^2)^{\frac{N-2s}{2}}}\\
  &\quad +O\Big(\frac{1}{(\lambda|x_j^+-x_1^+|)^{N-\epsilon_0}}\Big)\\
  &=\frac{C^{2^*_s}_N}{(\lambda|x_j^+-x_1^+|)^{N-2s}}\int_{{B_{{\lambda d_0}}(0)}}\frac{1}{(1+z^2)^{\frac{N+2s}{2}}}\Bigg(1-\frac{N-2s}{2}\frac{1+z^2-2\lambda\langle z,x_j^+-x_1^+\rangle}{\lambda^{2}|x_j^+-x_1^+|^2}\\
  &\quad +O\bigg(\Big(\frac{1+z^2-2\lambda\langle z,x_j^+-x_1^+\rangle}{\lambda^2|x_j^+-x_1^+|^2}\Big)^2\bigg)\Bigg)+O\Big(\frac{1}{(\lambda|x_j^+-x_1^+|)^{N-\epsilon_0}}\Big)\\
  &=\frac{A_5}{(\lambda|x_j^+-x_1^+|)^{N-2s}}+O\Big(\frac{1}{(\lambda|x_j^+-x_1^+|)^{N-\epsilon_0}}\Big),
  \end{align*}
where $A_5=C_N\int_{\R^N}U_{0,1}^{2^*_s-1}$ and $\epsilon_0>0$ is a small constant.
\end{proof}

Similarly, we can also obtain, for $j=2,\cdots,N$,
\begin{equation}\label{a4.4.1b}
  \int_{\R^N}U_{x_1^+,\lambda}U_{x_j^+,\lambda}=O\Big(\frac{1}{\lambda^{N-2s}|x_j^+ - x_1^+|^{N-4s}} \Big),
\end{equation}
and for $j=1,\cdots,N$,
\begin{equation}\label{a4.4.2b}
  \int_{\R^N}U_{x_1^+,\lambda}U_{x_j^-,\lambda}=O\Big(\frac{1}{\lambda^{N-2s}|x_j^- - x_1^+|^{N-4s}} \Big).
\end{equation}

\begin{lemma}\label{lemA.4.5a}
We have the expansion, for $k \to \infty$
\begin{equation}\label{a4.4.1a}
  \int_{\R^N}U_{x_1^+,\lambda}^{2_s^*-2}U_{x_j^+,\lambda}\frac{\partial U_{x_1^+,\lambda}}{\partial y_l}=\frac{A_6(x_j^+-x_1^+)_l}{\lambda^{N-2s}|x_j^+-x_1^+|^{N-2s+2}}+O\Big(\frac{1}{(\lambda|x_j^+-x_1^+|)^{N+1-\epsilon_0}}\Big),
\end{equation}
and
\begin{equation}\label{a4.4.2a}
  \int_{\R^N}U_{x_1^+,\lambda}^{2_s^*-2}U_{x_j^-,\lambda}\frac{\partial U_{x_1^+,\lambda}}{\partial y_l}=\frac{A_6(x_j^--x_1^+)_l}{\lambda^{N-2s}|x_j^--x_1^+|^{N-2s+2}}+O\Big(\frac{1}{(\lambda|x_j^+-x_1^+|)^{N+1-\epsilon_0}}\Big),
\end{equation}
where $A_6=\frac{(N-2s)^2}{N+2s}A_5$ and $\epsilon_0>0$ is a small constant.
\end{lemma}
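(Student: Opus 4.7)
The plan is to reduce both expansions to derivatives of the basic convolution integral appearing in Lemma \ref{lemA.4.5}. Observe the identity
$$U_{x_1^+,\lambda}^{2_s^*-2}\frac{\partial U_{x_1^+,\lambda}}{\partial y_l}=\frac{1}{2_s^*-1}\frac{\partial}{\partial y_l}U_{x_1^+,\lambda}^{2_s^*-1}=-\frac{1}{2_s^*-1}\frac{\partial}{\partial (x_1^+)_l}U_{x_1^+,\lambda}^{2_s^*-1},$$
where the last equality uses the translation invariance of the Aubin--Talenti bubble. Consequently, if $F(w):=\int_{\R^N}U_{0,\lambda}^{2_s^*-1}(z)U_{w,\lambda}(z)\,dz$ denotes the function estimated in Lemma \ref{lemA.4.5}, then after the translation $z=y-x_1^+$ one obtains
$$\int_{\R^N}U_{x_1^+,\lambda}^{2_s^*-2}U_{x_j^+,\lambda}\frac{\partial U_{x_1^+,\lambda}}{\partial y_l}\,dy=\frac{1}{2_s^*-1}\frac{\partial F}{\partial w_l}\Big|_{w=x_j^+-x_1^+},$$
and analogously for the second identity with $w=x_j^--x_1^+$. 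So the task is just to differentiate the expansion of $F$ and control the error after differentiation.

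To carry this out cleanly, I would avoid invoking differentiation of the asymptotic formula as a black box (which would require uniform control of error derivatives) and instead redo the scaling argument directly. Rescaling by $z=\lambda(y-x_1^+)$ and setting $w=\lambda(x_j^+-x_1^+)$, the left-hand side becomes
$$\lambda\int_{\R^N}U_{0,1}^{2_s^*-2}(z)\,\partial_{z_l}U_{0,1}(z)\,U_{0,1}(z-w)\,dz.$$
For $|w|$ large, expand $U_{0,1}(z-w)=\tfrac{C_N}{|w|^{N-2s}}\bigl(1+(N-2s)\tfrac{\langle z,w\rangle}{|w|^2}+O(|z|^2/|w|^2)\bigr)$ uniformly on compacts in $z$. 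The constant-in-$z$ term contributes zero after integration, since $U_{0,1}^{2_s^*-2}\partial_{z_l}U_{0,1}=(2_s^*-1)^{-1}\partial_{z_l}U_{0,1}^{2_s^*-1}$ is a full derivative. The linear term yields, by the integration-by-parts identity
$$\int_{\R^N}U_{0,1}^{2_s^*-2}\,\partial_{z_l}U_{0,1}\,z_m\,dz=-\frac{\delta_{lm}}{2_s^*-1}\int_{\R^N}U_{0,1}^{2_s^*-1},$$
precisely the coefficient $\tfrac{(N-2s)^2}{N+2s}A_5=A_6$ multiplying $w_l/|w|^{N-2s+2}$. Undoing the rescaling reproduces the main term in the statement (up to sign convention), while the remainder from the Taylor expansion is $O(|w|^{-(N-2s+2)}\cdot|w|^{-\epsilon_0})$ after using the absolutely integrable factor $U_{0,1}^{2_s^*-2}|\nabla U_{0,1}|\cdot|z|^{1+\epsilon_0}$.

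The main obstacle is the error analysis: the pointwise Taylor bound $O(|z|^2/|w|^2)$ is integrable against $U_{0,1}^{2_s^*-2}|\nabla U_{0,1}|$ only for small $s$, and one must handle the region where $|z|\gtrsim|w|$ separately. I would split the $z$-integral into $|z|\le|w|/2$, where the Taylor expansion is valid with remainder $O(|z|^{1+\epsilon_0}/|w|^{1+\epsilon_0})$ (interpolating between the linear term and the pointwise bound $|U_{0,1}(z-w)|\lesssim|w|^{-(N-2s)}$), and $|z|>|w|/2$, where one instead uses $|U_{0,1}(z-w)|\lesssim(1+|z-w|)^{-(N-2s)}$ together with the fast decay of $U_{0,1}^{2_s^*-2}\partial_{z_l}U_{0,1}\lesssim(1+|z|)^{-(N+1)}$; summing both parts produces the announced error $O((\lambda|x_j^+-x_1^+|)^{-(N+1-\epsilon_0)})$. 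The proof of \eqref{a4.4.2a} is identical with $x_j^-$ in place of $x_j^+$; under the bounds in \eqref{1.7} the separation $|x_j^--x_1^+|\gtrsim\bar r\bar h$ is enough for the expansion to be valid, so the same argument applies verbatim.
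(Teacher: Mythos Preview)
Your approach is essentially the same as the paper's: rescale by $z=\lambda(y-x_1^+)$, Taylor-expand the far bubble $U_{0,1}(z-w)$ in powers of $|w|^{-1}$, observe that the constant term vanishes by oddness, and extract $A_6$ from the linear term. The paper computes the constant directly from the explicit integral $\int z_l z_m (1+|z|^2)^{-(N+2s)/2-1}\,dz$ rather than via your integration-by-parts identity, and it is much terser on the error (it simply asserts the $O\bigl((\lambda|x_j^+-x_1^+|)^{-(N+1-\epsilon_0)}\bigr)$ bound without the near/far splitting you outline); your added care there is justified and fills a gap the paper glosses over. Your parenthetical ``up to sign convention'' is worth resolving: a direct computation indeed produces $-A_6$ in front of the main term, so either the paper's displayed sign is a typo or it is absorbed into how the formula is applied in Lemma~\ref{lemB.1a}; you should track this through to make sure the final reduced equations come out with the correct signs.
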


\begin{proof}
 We have
\begin{align*}
  &\int_{\R^N}U_{x_1^+,\lambda}^{2_s^*-2}U_{x_j^+,\lambda}\frac{\partial U_{x_1^+,\lambda}}{\partial y_l}\\
  &=C^{2^*_s}_N\int_{{B_{{\lambda d_0}}(0)}}\frac{(N-2s)^2\lambda z_l}{(1+z^2)^{\frac{N+2s}{2}+1}}\frac{\lambda\langle z,x_j^+-x_1^+\rangle}{(\lambda|x_j^+-x_1^+|)^{N-2s+2}}
  +O\Big(\frac{1}{(\lambda|x_j^+-x_1^+|)^{N+1-\epsilon_0}}\Big)\\
  &=\frac{A_6(x_j^+-x_1^+)_l}{\lambda^{N-2s}|x_j^+-x_1^+|^{N-2s+2}}+O\Big(\frac{1}{(\lambda|x_j^+-x_1^+|)^{N+1-\epsilon_0}}\Big),
\end{align*}
where $A_6=\frac{(N-2s)^2}{N}C_N^{2_s^*}\int_{\R^N}\frac{z^2}{(1+z^2)^{\frac{N+2s}{2}+1}}=\frac{(N-2s)^2}{N+2s}A_5$ and $\epsilon_0>0$ is a small constant.
\end{proof}

Similarly, we can also obtain, for $j=2,\cdots,N$,
\begin{equation}\label{a4.4.1c}
  \int_{\R^N}U_{x_1^+,\lambda}\frac{\partial U_{x_j^+,\lambda}}{\partial y_l}=O\Big(\frac{1}{\lambda^{N-2s}|x_j^+ - x_1^+|^{N-4s+1}} \Big),
\end{equation}
and for $j=1,\cdots,N$,
\begin{equation}\label{a4.4.2c}
  \int_{\R^N}U_{x_1^+,\lambda}\frac{\partial U_{x_j^-,\lambda}}{\partial y_l}=O\Big(\frac{1}{\lambda^{N-2s}|x_j^- - x_1^+|^{N-4s+1}} \Big).
\end{equation}

\begin{lemma}\label{lemA.3}
Suppose that $\tau \in (0,\frac{N-2s}{2})$, $y =(y_1,y_2,\cdots,y_N)$. Then there is a small $\theta>0$, such that
\begin{equation*}
  \int_{\R^N}\frac{Z_{\bar{r},\bar{h},\bar{y}'',\lambda}^{2_s^*-2}}{|y-z|^{N-2s}}
  \sum\limits_{j=1}^{k}\frac{1}{(1+\lambda|z-x_j^+|)^{\frac{N-2s}{2}+\tau}}dz \leq C \sum\limits_{j=1}^{k}\frac{1}{(1+\lambda|y-x_j^+|)^{\frac{N-2s}{2}+\tau+\theta}},
\end{equation*}
and
\begin{equation*}
  \int_{\R^N}\frac{Z_{\bar{r},\bar{h},\bar{y}'',\lambda}^{2_s^*-2}}{|y-z|^{N-2s}}
  \sum\limits_{j=1}^{k}\frac{1}{(1+\lambda|z-x_j^-|)^{\frac{N-2s}{2}+\tau}}dz \leq C \sum\limits_{j=1}^{k}\frac{1}{(1+\lambda|y-x_j^-|)^{\frac{N-2s}{2}+\tau+\theta}}.
\end{equation*}
\end{lemma}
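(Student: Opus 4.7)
The plan is to bound $Z_{\bar r,\bar h,\bar y'',\lambda}^{\,2_s^*-2}$ pointwise by a sum of single bubbles, substitute this into the integral, and then reduce the resulting iterated integral by the split--integrate--sum scheme using Lemma~\ref{lemA.1} (to decouple products of two bumps), Lemma~\ref{lemA.2} (to integrate against the Riesz-type kernel $|y-z|^{2s-N}$), and finally Lemma~\ref{lemA.4} (to sum over the bubble indices). The two conclusions are completely analogous, so I will describe the argument for the one involving $x_j^+$; the proof for $x_j^-$ is obtained by interchanging signs.

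Set $p:=2_s^*-2=\frac{4s}{N-2s}$. In the generic case $p\le1$ (which holds whenever $N\ge 6s$, covering most of the admissible $(N,s)$ for $N\ge 5$), the concavity inequality $(a_1+\cdots+a_{2k})^p\le a_1^p+\cdots+a_{2k}^p$ applied to $Z\le C\sum_i(U_{x_i^+,\lambda}+U_{x_i^-,\lambda})$ yields
\begin{equation*}
Z^{2_s^*-2}(z)\le C\sum_{i=1}^k\left(\frac{\lambda^{2s}}{(1+\lambda|z-x_i^+|)^{4s}}+\frac{\lambda^{2s}}{(1+\lambda|z-x_i^-|)^{4s}}\right).
\end{equation*}
(For the narrow range $p>1$ one uses $(\sum a_i)^p\le(\sum a_i)(\sum a_i^{p-1})$ together with an extra application of Lemma~\ref{lemA.1} to split the residual two-bump product, which produces the analogous bound.) Substituting, the LHS becomes a sum over $(i,j)$ and signs of integrals
\begin{equation*}
\lambda^{2s}\int_{\R^N}\frac{dz}{|y-z|^{N-2s}(1+\lambda|z-x_i^{\pm}|)^{4s}(1+\lambda|z-x_j^+|)^{(N-2s)/2+\tau}}.
\end{equation*}

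For the diagonal term (when $x_i^\pm=x_j^+$) the denominator is a single bump of exponent $4s+(N-2s)/2+\tau$; after the substitution $u=\lambda(z-x_j^+)$ and an application of Lemma~\ref{lemA.2} I obtain the single-bump bound $C/(1+\lambda|y-x_j^+|)^{(N-2s)/2+\tau+\theta_0}$ with $\theta_0=\min\bigl(2s,\,(N-2s)/2-\tau\bigr)>0$ (the would-be gain $2s$ from Lemma~\ref{lemA.2} saturates at the threshold $N-2s$ when $\tau$ is close to $(N-2s)/2$, but the hypothesis $\tau<(N-2s)/2$ ensures a strictly positive residual gain). For the off-diagonal terms I apply Lemma~\ref{lemA.1} with a small auxiliary exponent $\varsigma$, splitting the two-bump product into two single-bump integrals at the cost of a factor $(\lambda|x_i^\pm-x_j^+|)^{-\varsigma}$, and integrate each as in the diagonal case.

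The final double summation is controlled by Lemma~\ref{lemA.4}: with $\lambda\sim k^{(N-2s)/(N-4s)}$ and the lower bound $\lambda\bar h\to\infty$ from \eqref{1.7}, the sums $\sum_i(\lambda|x_i^\pm-x_j^+|)^{-\varsigma}$ are uniformly bounded provided $\varsigma>\frac{N-4s}{N-2s}=2\tau$; since simultaneously $\varsigma<4s$ is possible in the admissible range for $s$, the required interval for $\varsigma$ is nonempty. Collecting all contributions produces the claimed RHS. The main bookkeeping obstacle is that the $x_i^-$-centered pieces of the expansion of $Z^{2_s^*-2}$, when paired with the $x_j^+$ weight, produce single-bump terms located at $x_i^-$ rather than at $x_j^+$; these are recast into the target RHS using the triangle inequality $|y-x_j^+|\le|y-x_i^-|+|x_i^--x_j^+|$ together with the decay factor $(\lambda\bar r\bar h)^{-\varsigma}\to 0$, which more than compensates the coordinate shift. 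This is precisely the step in which the assumption on the admissible range of $\bar h$ enters.
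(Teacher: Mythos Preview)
The paper omits its own argument and refers to Lemma~2.2 of \cite{guo2016infinitely}, whose single-circle setting has no $\pm$ structure; there the scheme is exactly the one you outline (bound $Z^{2_s^*-2}$ by single bubbles, decouple via Lemma~\ref{lemA.1}, integrate via Lemma~\ref{lemA.2}, and sum via Lemma~\ref{lemA.4}). So your route is the intended one.

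The gap is in your final paragraph. After the decoupling, the cross terms coming from the $x_i^-$-piece of $Z^{2_s^*-2}$ leave you with
\[
\frac{1}{(\lambda|x_i^--x_j^+|)^{\varsigma}}\cdot\frac{1}{(1+\lambda|y-x_i^-|)^{\frac{N-2s}{2}+\tau+\theta_1}},
\]
and your proposed conversion of the second factor into one centred at $x_j^+$ via $|y-x_j^+|\le|y-x_i^-|+|x_i^--x_j^+|$ costs a \emph{growth} factor $(1+\lambda|x_i^--x_j^+|)^{\frac{N-2s}{2}+\tau+\theta_1}$. In the generic regime $N\ge 6s$ one has $\frac{N-2s}{2}+\tau+\theta_1>2s>\varsigma$, so the net factor $(\lambda\bar r\bar h)^{\frac{N-2s}{2}+\tau+\theta_1-\varsigma}\to\infty$ and the ``more than compensates'' claim fails.

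The clean fix is not to attempt the conversion at all: prove instead the combined inequality with both $\pm$ appearing on the right-hand side, which is exactly how the lemma is applied in \eqref{2.7}. Then the $x_i^-$-centred terms simply land in the $x^-$ part of the target sum (and symmetrically for the second inequality), and your argument goes through without the last paragraph.
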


\begin{proof}
Since proof of Lemma \ref{lemA.3} is similar to Lemma 2.2 of \cite{guo2016infinitely}, here we omit it.
\end{proof}

\begin{lemma}\label{lemA.4.6}
For $y \in \Omega_1^+$, there exists a constant $C>0$ such that
\begin{align*}
&\sum\limits_{j=2}^{k}U_{x_j^+,\lambda}+\sum\limits_{j=1}^{k}U_{x_j^-,\lambda}\\
&\leq\frac{C\lambda^{\frac{N-2s}{2}}}{(1+\lambda|y-x_1^+|)^{N-2s-\theta}}\Big(\sum\limits_{j=2}^{k}\frac{1}{(\lambda|x_1^+-x_j^+|)^{\theta}}+\sum\limits_{j=1}^{k}\frac{1}{(\lambda|x_1^+-x_j^-|)^{\theta}}\Big),
\end{align*}
where $\theta \in (1,N-2s)$.
\end{lemma}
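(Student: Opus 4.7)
The plan is to exploit the fact that on $\Omega_1^+$ the center $x_1^+$ is nearest to $y$ among all the $2k$ centers $\{x_j^\pm\}$, and then split each factor $(1+\lambda|y-x_j^\pm|)^{-(N-2s)}$ by decomposing the exponent as $N-2s=(N-2s-\theta)+\theta$.

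First I would recall from \eqref{b2.24} that for $y\in\Omega_1^+$,
\begin{equation*}
|y-x_1^+|\leq|y-x_j^+|\quad(j\geq 2),\qquad |y-x_1^+|\leq|y-x_j^-|\quad(j\geq 1).
\end{equation*}
Combining these with the triangle inequality $|x_1^+-x_j^\pm|\leq|y-x_1^+|+|y-x_j^\pm|\leq 2|y-x_j^\pm|$, I obtain
\begin{equation*}
|y-x_j^\pm|\geq\tfrac{1}{2}|x_1^+-x_j^\pm|.
\end{equation*}

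Next, using $U_{x_j^\pm,\lambda}(y)\leq C\lambda^{(N-2s)/2}(1+\lambda|y-x_j^\pm|)^{-(N-2s)}$, I write each summand as
\begin{equation*}
U_{x_j^\pm,\lambda}(y)\leq\frac{C\lambda^{(N-2s)/2}}{(1+\lambda|y-x_j^\pm|)^{N-2s-\theta}}\cdot\frac{1}{(1+\lambda|y-x_j^\pm|)^{\theta}}.
\end{equation*}
To the first factor I apply $|y-x_j^\pm|\geq|y-x_1^+|$, obtaining the bound $(1+\lambda|y-x_1^+|)^{-(N-2s-\theta)}$ (which decays since $\theta<N-2s$). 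To the second factor I apply $|y-x_j^\pm|\geq\tfrac{1}{2}|x_1^+-x_j^\pm|$. Since the parameter choices \eqref{1.7} together with $\lambda\sim k^{\frac{N-2s}{N-4s}}$ and $\bar h\sim k^{-\frac{N-2s-1}{N-2s+1}}$ guarantee that $\lambda|x_1^+-x_j^\pm|$ is bounded below by a positive constant, one gets $(1+\lambda|y-x_j^\pm|)^{-\theta}\leq C(\lambda|x_1^+-x_j^\pm|)^{-\theta}$.

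Combining these two estimates yields
\begin{equation*}
U_{x_j^\pm,\lambda}(y)\leq\frac{C\lambda^{(N-2s)/2}}{(1+\lambda|y-x_1^+|)^{N-2s-\theta}}\cdot\frac{1}{(\lambda|x_1^+-x_j^\pm|)^{\theta}},
\end{equation*}
and summing over $j=2,\dots,k$ for the $+$ centers and $j=1,\dots,k$ for the $-$ centers gives the claimed inequality. There is no real obstacle here; the only points to be careful about are (i) verifying that $\lambda|x_1^+-x_j^\pm|$ is indeed bounded below uniformly in $k$, which uses the geometric separation dictated by \eqref{1.7} (the minimal such distance comes from $|x_1^+-x_1^-|=2\bar r\bar h$, and $\lambda\bar h\to\infty$), and (ii) noting that although only $\theta<N-2s$ is needed for this pointwise estimate, the hypothesis $\theta>1$ is recorded for later use so that the subsequent sums $\sum_j(\lambda|x_1^+-x_j^\pm|)^{-\theta}$ can be controlled via Lemma \ref{lemA.4}.
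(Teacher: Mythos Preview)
Your proof is correct and follows essentially the same approach as the paper: split the exponent $N-2s=(N-2s-\theta)+\theta$, use $|y-x_j^\pm|\ge|y-x_1^+|$ for the first factor, and use $|y-x_j^\pm|\ge c\,|x_1^+-x_j^\pm|$ for the second. Your derivation of the latter via the triangle inequality (yielding the constant $\tfrac12$) is in fact slightly cleaner than the paper's two-case argument (which yields $\tfrac14$); also, your caveat~(i) about $\lambda|x_1^+-x_j^\pm|$ being bounded below is harmless but unnecessary, since $(1+t)^{-\theta}\le t^{-\theta}$ holds for all $t>0$.
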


\begin{proof}
For $y \in \Omega_1^+$ and $j=2,\cdots,k$, we have
\begin{equation*}
  |y-x_j^+|\geq|x_1^+ - x_j^+| - |y-x_1^+| \geq \frac{1}{4}|x_1^+ - x_j^+|, \,\quad \text{if~}\, |y-x_1^+| \leq\frac{1}{4}|x_1^+ - x_j^+|,
\end{equation*}
and
\begin{equation*}
  |y-x_j^+|\geq|y-x_1^+| \geq \frac{1}{4}|x_1^+ - x_j^+|, \,\quad \text{if~} \, |y-x_1^+| \geq\frac{1}{4}|x_1^+ - x_j^+|.
\end{equation*}
Similarly, we can get
\begin{equation*}
  |y-x_j^-|\geq \frac{1}{4}|x_1^+ - x_j^-|.
\end{equation*}
Hence, there holds
\begin{align*}
&\sum\limits_{j=2}^{k}U_{x_j^+,\lambda}+\sum\limits_{j=1}^{k}U_{x_j^-,\lambda}\\
&\leq \frac{C\lambda^{\frac{N-2s}{2}}}{(1+\lambda|y-x_1^+|)^{N-2s-\theta}}\Big(\sum\limits_{j=2}^{k}\frac{1}{(1+\lambda|y-x_j^+|)^{\theta}}+\sum\limits_{j=1}^{k}\frac{1}{(1+\lambda|y-x_j^-|)^{\theta}}\Big)\\
&\leq\frac{C\lambda^{\frac{N-2s}{2}}}{(1+\lambda|y-x_1^+|)^{N-2s-\theta}}\Big(\sum\limits_{j=2}^{k}\frac{1}{(\lambda|x_1^+-x_j^+|)^{\theta}}+\sum\limits_{j=1}^{k}\frac{1}{(\lambda|x_1^+-x_j^-|)^{\theta}}\Big).
\end{align*}
\end{proof}

\begin{lemma}(Lemma A.3, \cite{guo2020solutions})\label{lemC.1}
Let $\sigma>0.$ For any constant $0<\theta<N$, there exists a constant $C>0$, independent of $\sigma$, such that
\begin{equation*}
\int_{\R^N\backslash B_\sigma(y)}\frac{1}{|y-z|^{N+2s}}\frac{1}{(1+|z|)^\theta}dz\leq C\Big(\frac{1}{(1+|y|)^{\theta+2s}}+\frac{1}{\sigma^{2s}}\frac{1}{(1+|y|)^\theta}\Big).
\end{equation*}
\end{lemma}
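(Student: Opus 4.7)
The plan is a two-region domain split combined with elementary radial integration, with the hypothesis $\theta<N$ entering in exactly one spot.

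First, I would record the trivial radial bound that will be used twice: passing to polar coordinates centered at $y$,
$$\int_{|y-z|\geq\sigma}\frac{dz}{|y-z|^{N+2s}}=\omega_{N-1}\int_\sigma^\infty r^{-(N+2s)}r^{N-1}\,dr=\frac{C}{\sigma^{2s}}.$$
When $|y|\leq 1$, both $(1+|y|)^{-\theta}$ and $(1+|y|)^{-(\theta+2s)}$ are comparable to $1$, so it suffices to drop the weight via $(1+|z|)^{-\theta}\leq 1$ and apply the identity above; the result is absorbed into the second term of the right-hand side.

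For $|y|\geq 1$, I would split the domain $\R^N\setminus B_\sigma(y)$ into
$$D_1=\bigl\{z:|z|\leq |y|/2\bigr\},\qquad D_2=\bigl\{z:|z|>|y|/2\bigr\},$$
each intersected with the exterior of $B_\sigma(y)$. On $D_1$ the triangle inequality forces $|y-z|\geq |y|/2$, so the singular factor is bounded by $C|y|^{-(N+2s)}$ and the remaining weighted volume integral $\int_{|z|\leq|y|/2}(1+|z|)^{-\theta}\,dz$ is of order $|y|^{N-\theta}$ by a direct radial computation. This is the single place where the hypothesis $\theta<N$ is used, as it makes the integrand integrable against the growing ball volume. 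Multiplying the two estimates produces $C|y|^{-(\theta+2s)}$, which matches the first term of the target bound (replacing $|y|$ by $1+|y|$ harmlessly since $|y|\geq 1$).

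On $D_2$ I would instead use $(1+|z|)^{-\theta}\leq C(1+|y|)^{-\theta}$, pull this factor out, and apply the radial estimate from the first step to obtain $C\sigma^{-2s}(1+|y|)^{-\theta}$, matching the second term. Adding the $D_1$ and $D_2$ contributions completes the argument. There is no substantive obstacle: the only care needed is keeping the constant $C$ independent of $\sigma$, which is automatic because on $D_1$ the exclusion of $B_\sigma(y)$ is simply discarded (harmlessly enlarging the integration region) while on $D_2$ the $\sigma$-dependence appears cleanly and only through the radial integral.
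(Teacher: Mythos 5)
Your proof is correct: the radial bound $\int_{|y-z|\ge\sigma}|y-z|^{-(N+2s)}dz=C\sigma^{-2s}$, the case $|y|\le 1$, and the split at $|z|=|y|/2$ (using $|y-z|\ge|y|/2$ on the inner region, where $\theta<N$ gives $\int_{|z|\le|y|/2}(1+|z|)^{-\theta}dz\le C|y|^{N-\theta}$, and pulling out $(1+|z|)^{-\theta}\le C(1+|y|)^{-\theta}$ on the outer one) all check out, and the constants are visibly independent of $\sigma$. Note that the paper itself does not prove this lemma -- it is quoted from Lemma A.3 of \cite{guo2020solutions} -- so there is no in-paper argument to compare against; your self-contained proof is the standard one and is in the spirit of the cited source.
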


\begin{lemma}(Lemma A.5, \cite{guo2020solutions})\label{lemA.7}
Suppose that $(y-x)^2+t^2=\rho^2$, $t>0$. Then there exists a constant $C>0$ such that
\begin{equation*}
  |\tilde{Z}_{x_j^\pm,\lambda}|\leq\frac{C}{\lambda^{\frac{N-2s}{2}}}\frac{1}{(1+|y-x_j^\pm|)^{N-2s}} \text{~and~} |\nabla\tilde{Z}_{x_j^\pm,\lambda}|\leq\frac{C}{\lambda^{\frac{N-2s}{2}}}\frac{1}{(1+|y-x_j^\pm|)^{N-2s+1}}.
\end{equation*}
\end{lemma}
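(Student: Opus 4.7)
My approach is to use the Poisson kernel representation of the $s$-harmonic extension,
$$\tilde{Z}_{x_j^\pm,\lambda}(y,t)=\beta(N,s)\int_{\R^N} \frac{t^{2s}}{(|y-\xi|^2+t^2)^{(N+2s)/2}}\, \eta(\xi)\, U_{x_j^\pm,\lambda}(\xi)\, d\xi,$$
and to estimate the integral by exploiting that the bubble $U_{x_j^\pm,\lambda}$ is concentrated near $x_j^\pm$, while the evaluation point $(y,t)$ is separated from $x_j^\pm$ by a distance of order at least $\sigma$ (since $(y,t)\in\partial''B_\rho^+$ with $\rho\in(2\sigma,5\sigma)$ and $x_j^\pm$ is close to $(r_0,y_0'')$).

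The plan is to split the $\xi$-integration at $|\xi-x_j^\pm|=R/2$, where $R:=|(y,t)-(x_j^\pm,0)|\ge |y-x_j^\pm|$. On the inner region $|\xi-x_j^\pm|\le R/2$, the triangle inequality gives $|y-\xi|^2+t^2\ge R^2/4$, so the Poisson kernel is bounded by $C t^{2s}/R^{N+2s}$; the integral of the bubble factor on this region, after the rescaling $\xi=x_j^\pm+z/\lambda$, is $\le C\lambda^{-(N-2s)/2}\int(1+|z|)^{-(N-2s)}\,dz$, which is finite thanks to the cut-off from $\eta$ at $|z|\lesssim\lambda$. On the outer region $|\xi-x_j^\pm|>R/2$, I would use the pointwise decay $U_{x_j^\pm,\lambda}(\xi)\le C\lambda^{-(N-2s)/2}R^{-(N-2s)}$ and the mass-one identity $\int_{\R^N} P_s(y-\xi,t)\,d\xi=1$. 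Both contributions combine to yield
$$|\tilde{Z}_{x_j^\pm,\lambda}(y,t)|\le \frac{C}{\lambda^{(N-2s)/2}(1+|y-x_j^\pm|)^{N-2s}},$$
once one uses that $R\gtrsim 1$ and $R\ge|y-x_j^\pm|$ on $\partial''B_\rho^+$.

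The gradient bound follows from differentiating under the integral sign in the Poisson formula; each partial derivative in $y$ or $t$ introduces an extra factor of order $1/\sqrt{|y-\xi|^2+t^2}$ in the kernel. Repeating the same splitting argument then produces the advertised additional $(1+|y-x_j^\pm|)^{-1}$ decay.

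The main obstacle is the careful bookkeeping in the outer region: because $U_{x_j^\pm,\lambda}$ has only polynomial decay of order $N-2s$, one has to ensure that the mass of the Poisson kernel outside the concentration ball is absorbed by the bubble tail without loss in $\lambda$. This is where the constraint $(y,t)\in\partial''B_\rho^+$ is essential, as it guarantees the lower bound $R\gtrsim\sigma$ and prevents any singular contribution from the kernel. With this separation in hand, the argument is essentially the one carried out in Lemma~A.5 of \cite{guo2020solutions}, which may alternatively be invoked verbatim.
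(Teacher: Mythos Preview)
The paper itself supplies no proof of this lemma; it is stated as a direct quotation of Lemma~A.5 in \cite{guo2020solutions}. Your Poisson--kernel splitting is the standard route and is essentially correct for the pointwise bound on $\tilde Z_{x_j^\pm,\lambda}$, so in that sense your proposal matches what the paper (implicitly) relies on.

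Two points deserve correction. First, in the inner region your bookkeeping slips: after the rescaling $\xi=x_j^\pm+z/\lambda$ the bubble integral carries the prefactor $\lambda^{-(N+2s)/2}$, not $\lambda^{-(N-2s)/2}$, and the truncated integral $\int_{|z|\lesssim\lambda}(1+|z|)^{-(N-2s)}\,dz$ is not uniformly bounded but grows like $\lambda^{2s}$. The product of these two is $\lambda^{-(N-2s)/2}$, so your final estimate survives, but the reasoning as written is inaccurate. Second, and more seriously, your gradient argument is too coarse for the $t$-derivative. Differentiating $P_s$ in $t$ produces a term $2s\,t^{2s-1}/(|y-\xi|^2+t^2)^{(N+2s)/2}$ which is \emph{not} of the form ``$P_s$ times $(|y-\xi|^2+t^2)^{-1/2}$''; on the outer region, bounding the bubble tail pointwise and integrating this kernel yields a spurious factor $1/t$ that blows up as $t\to 0^+$ on $\partial''B_\rho^+$. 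To repair this you must either exploit the cancellation $\int_{\R^N}\partial_t P_s(y-\xi,t)\,d\xi=0$ (subtracting $Z_{x_j^\pm,\lambda}(y)$ before differentiating), or use interior gradient estimates for the degenerate equation $\mathrm{div}(t^{1-2s}\nabla\tilde u)=0$ at scale comparable to the distance to $(x_j^\pm,0)$, or---cleanest---compare directly with the explicit $s$-harmonic extension of the pure bubble, whose gradient visibly gains the extra $(1+|y-x_j^\pm|)^{-1}$.
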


\begin{lemma}\label{lemA.8}
For any $\delta>0$, there exists $\rho=\rho(\delta)\in(2\delta,5\delta)$ such that when $N>4s$,
\begin{equation*}
  \int_{\partial''B_\rho^+}t^{1-2s}|\tilde{\varphi}|^2dS \leq \frac{Ck\|\varphi\|_*^2}{\lambda^\tau} \text{~and~}
  \int_{\partial''B_\rho^+}t^{1-2s}|\nabla\tilde{\varphi}|^2dS \leq \frac{Ck\|\varphi\|_*^2}{\lambda^\tau},
\end{equation*}
where $C>0$ is a constant, depending on $\delta$.
\end{lemma}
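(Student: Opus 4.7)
The plan is to pass from the surface integral over $\partial''B_\rho^+$ to a volume integral over the shell $B_{5\delta}^+\setminus B_{2\delta}^+$ by averaging in $\rho$, then control the latter via the Poisson representation of $\tilde{\varphi}$ together with a weighted Caccioppoli estimate. By Fubini,
\begin{equation*}
\int_{2\delta}^{5\delta}\!\int_{\partial''B_\rho^+}t^{1-2s}\bigl(|\tilde{\varphi}|^2+|\nabla\tilde{\varphi}|^2\bigr)\,dS\,d\rho
=\int_{B_{5\delta}^+\setminus B_{2\delta}^+}t^{1-2s}\bigl(|\tilde{\varphi}|^2+|\nabla\tilde{\varphi}|^2\bigr)\,dV,
\end{equation*}
so pigeonhole produces a single $\rho\in(2\delta,5\delta)$ for which both surface integrals are bounded by $(3\delta)^{-1}$ times the shell integral. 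It therefore suffices to bound the right-hand volume integral by $Ck\|\varphi\|_*^2/\lambda^\tau$.

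For the $|\tilde{\varphi}|^2$ part, apply Jensen's inequality to $\tilde{\varphi}(y,t)=\int P_s(y-\xi,t)\varphi(\xi)\,d\xi$ (using $\int P_s(y-\xi,t)\,d\xi=1$) to get $|\tilde{\varphi}(y,t)|^2\leq\int P_s(y-\xi,t)|\varphi(\xi)|^2\,d\xi$. A Fubini swap then yields
\begin{equation*}
\int_{B_{5\delta}^+\setminus B_{2\delta}^+}t^{1-2s}|\tilde{\varphi}|^2\,dV\leq\int_{\R^N}|\varphi(\xi)|^2\,K(\xi)\,d\xi,\qquad K(\xi):=\int_{B_{5\delta}^+}t^{1-2s}P_s(y-\xi,t)\,dV.
\end{equation*}
A polar-coordinate computation (centered at $(\xi,0)$) shows that $K$ is uniformly bounded in $\xi$, with the far-field decay $K(\xi)\leq C_\delta|\xi-(r_0,y_0'')|^{-(N+2s)}$ for $|\xi-(r_0,y_0'')|\geq 20\delta$. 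Inserting the pointwise bound $|\varphi|\leq\|\varphi\|_*W$, with $W=\sum_{j,\pm}\lambda^{(N-2s)/2}(1+\lambda|\cdot-x_j^\pm|)^{-(N-2s)/2-\tau}$, reduces the task to estimating $\int W^2K$; this is done by expanding $(\sum g)^2=\sum g^2+\sum_{i\neq j}g_ig_j$, rescaling (which produces the factor $\lambda^{-2\tau}$, finite because $N>4s$ forces $\tau>0$), applying Lemma \ref{lemA.1} to the cross products with an appropriately chosen $\varsigma$, and summing over pairs via the separation estimates of Lemma \ref{lemA.4}.

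For the $|\nabla\tilde{\varphi}|^2$ piece I would use a weighted Caccioppoli inequality for the operator $\mathrm{div}(t^{1-2s}\nabla\cdot)$. Take $\chi\in C_c^\infty(\overline{\R_+^{N+1}})$ with $\chi\equiv 1$ on $B_{5\delta}^+\setminus B_{2\delta}^+$, $\mathrm{supp}\,\chi\subset B_{6\delta}^+\setminus B_\delta^+$, and $|\nabla\chi|\leq C/\delta$. Testing $\mathrm{div}(t^{1-2s}\nabla\tilde{\varphi})=0$ against $\chi^2\tilde{\varphi}$ and integrating by parts, using the Caffarelli--Silvestre boundary relation $-\lim_{t\to 0^+}t^{1-2s}\partial_t\tilde{\varphi}=(-\Delta)^s\varphi$, gives
\begin{equation*}
\int t^{1-2s}\chi^2|\nabla\tilde{\varphi}|^2\,dV=-2\int t^{1-2s}\chi\tilde{\varphi}\,\nabla\chi\!\cdot\!\nabla\tilde{\varphi}\,dV+\int_{\R^N}\chi^2\varphi\,(-\Delta)^s\varphi\,dy.
\end{equation*}
Cauchy--Schwarz on the first term and absorption of half of $\int t^{1-2s}\chi^2|\nabla\tilde{\varphi}|^2$ into the left-hand side reduces the bulk contribution to the $L^2$ bound for $\tilde{\varphi}$ on the enlarged shell, already handled in the previous paragraph. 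The boundary integral is controlled via equation \eqref{2.22} together with $\|\mathcal{F}(\varphi)\|_{**}$, $\|l_k\|_{**}$ (Lemmas \ref{lem2.3} and \ref{lem2.4}) and the a priori bounds of Proposition \ref{prop2.5}.

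The main technical obstacle is achieving the \emph{linear}-in-$k$ dependence in place of the naive $k^2$ that one would get from the convexity bound $(\sum_{j,\pm}g_{j,\pm})^2\leq 4k\sum g_{j,\pm}^2$. The resolution lies in careful bookkeeping: one must exploit both \eqref{a4.3.1} (for sums of the form $\sum|x_1^+-x_j^+|^{-\gamma}$) and \eqref{a4.3.a2} (for the $\bar{h}$-dependent sums $\sum|x_1^+-x_j^-|^{-\gamma}$, which are new compared with \cite{guo2020solutions}), coupled with Lemma \ref{lemA.1} applied with a parameter $\varsigma$ tuned to the restriction $N>4s$, and to use the decay tail of $K(\xi)$ for $\xi$ far from the center $(r_0,y_0'')$ to suppress the contribution of bubbles $x_j^\pm$ lying far from the ball.
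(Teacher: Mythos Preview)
The paper gives no proof beyond referring to Lemma~A.6 of \cite{guo2020solutions}, so there is nothing detailed to compare against; your averaging/pigeonhole reduction and the Jensen--Fubini treatment of the $|\tilde\varphi|^2$ integral are standard and correct, and your identification of the $k$ versus $k^2$ issue (resolved via Lemma~\ref{lemA.1} and Lemma~\ref{lemA.4}) is exactly the point.

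The weak spot is the gradient estimate. Your Caccioppoli argument produces the trace term $\int_{\R^N}\chi^2(\cdot,0)\,\varphi\,(-\Delta)^s\varphi$, and you propose to control it by substituting equation~\eqref{2.22}. Two problems arise. First, the lemma is phrased with a right-hand side depending only on $\|\varphi\|_*$, so invoking the equation (and Proposition~\ref{prop2.5}) is out of place if the statement is meant for arbitrary $\varphi$. Second, and more concretely, the substitution brings in the term $\int\chi^2\varphi\, l_k$: on the annulus $\{\sigma<|(r,y'')-(r_0,y_0'')|<6\sigma\}$ one has the crude pointwise bounds $|\varphi|\le Ck\|\varphi\|_*\lambda^{-\tau}$ and (from $J_2$ in~\eqref{2.24}) $|l_k|\ge c\,k\lambda^{-(N-2s)/2}$, so the resulting contribution is of order $k^2\|\varphi\|_*\lambda^{-\tau-(N-2s)/2}$; requiring this to be $\le Ck\|\varphi\|_*^2\lambda^{-\tau}$ forces $k\lambda^{-(N-2s)/2}\le C\|\varphi\|_*$, which fails for part of the admissible range of $(N,s)$ once one inserts $k\sim\lambda^{2\tau}$ and $\|\varphi\|_*\sim\lambda^{-(2s+1)/2}$. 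The multiplier terms $c_l\sum Z_{x_j^\pm,\lambda}^{2_s^*-2}Z_{j,l}^\pm$ create the same mismatch with the purely $\|\varphi\|_*^2$ form of the bound.

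The route that keeps the estimate intrinsic to $\|\varphi\|_*$ is to treat $\nabla\tilde\varphi$ the same way you treat $\tilde\varphi$: differentiate the Poisson representation and run a weighted Cauchy--Schwarz/Fubini argument. On the shell one has either $t\ge c\delta$ (where interior estimates for $\operatorname{div}(t^{1-2s}\nabla\,\cdot\,)=0$ bound $|\nabla\tilde\varphi|$ by local averages of $|\tilde\varphi|$, already handled) or $t<c\delta$ and hence $|y-(r_0,y_0'')|\ge c'\delta$, so that every $x_j^\pm$ lies at distance $\ge c''\delta$ from $y$ and both $P_s(y-\xi,t)$ and $\nabla P_s(y-\xi,t)$ are uniformly small for $\xi$ near the bubbles; this yields pointwise control of $\tilde\varphi$ and $\nabla\tilde\varphi$ directly in terms of $\|\varphi\|_*$ and the weight $W$, after which the same diagonal/off-diagonal bookkeeping you outline (Lemma~\ref{lemA.1} plus the sums in Lemma~\ref{lemA.4}) gives the stated $Ck\|\varphi\|_*^2/\lambda^\tau$ without ever appealing to~\eqref{2.22}.
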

\begin{proof}
The proof is similar to Lemma A.6 in \cite{guo2020solutions}.
\end{proof}

\section{Energy expansion}\label{secB}
\renewcommand{\theequation}{B.\arabic{equation}}

\noindent In this section, we give some estimates of the energy expansions for
the first derivatives of $ I(Z_{\bar{r},\bar{h},\bar{y}'',\lambda})$
about $\lambda,\bar{r},\bar{h}$  and $\bar{y}''_{j}(j=4,\cdots,N)$ respectively.

Recall that
\begin{equation*}
  I(u)=\frac{1}{2}\int_{\R^N}\big(|(-\Delta)^{\frac{s}{2}}u|^2 + V(y)u^2 \big)dy- \frac{1}{2_s^*}\int_{\R^N}(u)_+^{2_s^*} dy.
\end{equation*}

\begin{lemma}\label{lemB.0}
If $N>4s+1$, then
\begin{equation}\label{B0.0}
  \begin{split}
 I(Z_{\bar{r},\bar{h},\bar{y}'',\lambda})
&=k\bigg(B_0+\frac{B_1V(\bar{r},\bar{y}'')}{\lambda^{2s}}-\frac{B_2k^{N-2s}}{\lambda^{N-2s}(\sqrt{1-\bar{h}^2})^{N-2s}}\\
&\quad-\frac{B_3k}{\lambda^{N-2s}\bar{h}^{N-2s-1}\sqrt{1-\bar{h}^2}}+O\Big(\frac{1}{\lambda^{2s+\epsilon}}\Big)\bigg),
  \end{split}
\end{equation}
where
\begin{equation*}
  B_0=\frac{2s}{N}\int_{\R^N}U^{2^*_s}_{0,1},~B_1=\int_{\R^N}U^2_{0,1},~B_2=\frac{A_1C_N}{\bar{r}^{N-2s}}\int_{\R^N}U^{2^*_s-1}_{0,1},
\end{equation*}
and $B_3=\frac{A_2}{A_1}B_2$ are positive constants.
\end{lemma}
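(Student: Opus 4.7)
The plan is to expand each of the three pieces of $I(Z_{\bar{r},\bar{h},\bar{y}'',\lambda})$, namely the Gagliardo seminorm, the potential term, and the critical nonlinearity, by exploiting the fact that the bubbles are well-separated and localized. Throughout I will work with $Z_{\bar{r},\bar{h},\bar{y}'',\lambda}^{*}=\sum_{j=1}^k(U_{x_j^+,\lambda}+U_{x_j^-,\lambda})$ as the principal part and absorb the cutoff $\eta$ into acceptable remainders. The bubbles' full symmetry (cyclic in $j$ and reflection $y_3\mapsto -y_3$) reduces every sum to $2k$ copies of a term centered at $x_1^+$.

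\textbf{Step 1 (Replacing $Z$ by $Z^*$).} Since $\eta\equiv 1$ on $B_{\sigma}((r_0,y_0''))$ and $\eta\equiv 0$ outside $B_{2\sigma}$, the difference $Z_{\bar{r},\bar{h},\bar{y}'',\lambda}-Z^*_{\bar{r},\bar{h},\bar{y}'',\lambda}$ is supported far from every $x_j^{\pm}$, where the bubbles are already $O(\lambda^{-(N-2s)/2})$. A direct estimate using Lemma \ref{lemA.1} and Lemma \ref{lemA.4} yields that replacing $Z$ by $Z^*$ in each of the three integrals costs only $O(k/\lambda^{2s+\epsilon})$. It therefore suffices to compute $I(Z^*)$ with the same accuracy, together with the remainder attached to the extra terms $\eta U-U$ present in the nonlinearity and potential integrals.

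\textbf{Step 2 (Quadratic and nonlinear parts without cutoff).} For the Gagliardo part, since $(-\Delta)^s U_{x,\lambda}=U_{x,\lambda}^{2_s^*-1}$, one gets
\[
\tfrac12\int_{\R^N}|(-\Delta)^{s/2}Z^*|^2
=\tfrac12\sum_{i}\int U_{x_i,\lambda}^{2_s^*}+\sum_{i<j}\int U_{x_i,\lambda}^{2_s^*-1}U_{x_j,\lambda},
\]
where the indices $i,j$ run over all $2k$ centers. For the critical term I use the pointwise inequality
\[
\Bigl|\bigl(\sum U_i\bigr)^{2_s^*}-\sum U_i^{2_s^*}-2_s^*\sum_i U_i^{2_s^*-1}\sum_{j\neq i}U_j\Bigr|
\leq C\sum_{i\neq j}U_i^{\min(2_s^*-1,2)}U_j^{\max(1,2_s^*-2)},
\]
integrate, and bound the remainder by $O(k/\lambda^{2s+\epsilon})$ via Lemma \ref{lemA.1} together with Lemma \ref{lemA.4}. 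Combining, the nonlinear and quadratic pieces telescope to
\[
\bigl(\tfrac12-\tfrac{1}{2_s^*}\bigr)\sum_i\int U_i^{2_s^*}-\sum_{i\neq j}\int U_{x_i,\lambda}^{2_s^*-1}U_{x_j,\lambda}+O\bigl(k\lambda^{-2s-\epsilon}\bigr),
\]
where the first sum gives the leading constant $kB_0$ (using $\frac12-\frac{1}{2_s^*}=\frac{s}{N}$ and counting $2k$ bubbles).

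\textbf{Step 3 (Interaction sums).} By cyclic and reflection symmetry, the sum $\sum_{i\neq j}\int U_i^{2_s^*-1}U_j$ equals $2k$ times the sum over all partners of $x_1^+$:
\[
2k\Bigl[\sum_{\ell=2}^{k}\int U_{x_1^+,\lambda}^{2_s^*-1}U_{x_\ell^+,\lambda}+\sum_{\ell=1}^{k}\int U_{x_1^+,\lambda}^{2_s^*-1}U_{x_\ell^-,\lambda}\Bigr].
\]
Lemma \ref{lemA.4.5} converts each integral into $A_5/(\lambda|x_1^+-x_\ell^{\pm}|)^{N-2s}$ up to error $O((\lambda|x_1^+-x_\ell^{\pm}|)^{-N+\epsilon_0})$, and then Lemma \ref{lemA.4.4.5} performs the two geometric sums, producing
\[
\frac{A_1 A_5\,k^{N-2s}}{(\bar r\sqrt{1-\bar h^2})^{N-2s}}+\frac{A_2 A_5\,k}{\bar r^{N-2s}\bar h^{N-2s-1}\sqrt{1-\bar h^2}}
\]
plus admissible errors of order $k/\lambda^{2s+\epsilon}$ (here the bounds $\bar h\sim k^{-(N-2s-1)/(N-2s+1)}$ and $\lambda\sim k^{(N-2s)/(N-4s)}$ are essential). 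The identifications $A_1A_5/\bar r^{N-2s}=B_2$ and $A_2A_5/\bar r^{N-2s}=B_3$ give the two negative main terms.

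\textbf{Step 4 (Potential part and cutoff corrections).} For $\tfrac12\int V\,Z_{\bar{r},\bar{h},\bar{y}'',\lambda}^{2}$ I again reduce to $(Z^*)^2$ and split diagonal against off-diagonal terms. The cross terms $\int V U_i U_j$ are bounded via Lemma \ref{lemA.1} and \eqref{a4.4.1b}--\eqref{a4.4.2b} and fall into the error. Each diagonal contribution $\tfrac12\int V(y)U_{x_j^{\pm},\lambda}^2$ is evaluated by Taylor-expanding $V$ around $(\bar r,\bar y'')$, using $V(x_j^{\pm})=V(\bar r,\bar y'')$, and rescaling $z=\lambda(y-x_j^{\pm})$; this yields $V(\bar r,\bar y'')\lambda^{-2s}\int U_{0,1}^2+O(\lambda^{-2s-\epsilon})$ per bubble. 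Summing the $2k$ diagonal contributions gives $kB_1V(\bar r,\bar y'')/\lambda^{2s}$. The $\eta$-correction in the nonlinearity is absorbed exactly as in the estimate of $J_2$ and $J_3$ in Lemma \ref{lem2.4}.

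The main technical difficulty is the second interaction sum, the one responsible for $B_3$. Because $\bar h\to 0$, the ``+'' and ``$-$'' bubbles are much closer than neighboring same-sign bubbles, so the standard geometric sum argument must be replaced by the finer asymptotics of Lemma \ref{lemA.4.4.5}, and one must verify that the logarithmic and $(\bar h k)^{-1}$ remainders ultimately produce only the acceptable $O(k/\lambda^{2s+\epsilon})$ error. Once that is under control, assembling the diagonal, Gagliardo, potential, and two off-diagonal terms yields exactly \eqref{B0.0}.
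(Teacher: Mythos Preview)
Your approach is essentially the same as the paper's: reduce to $Z^*$, expand the Gagliardo and critical pieces around the diagonal, use Lemma \ref{lemA.4.5} and Lemma \ref{lemA.4.4.5} for the interaction sums, and Taylor-expand the potential term. The structure and all the key ingredients are correct.

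There is, however, a bookkeeping slip in Step~2. When you combine the Gagliardo and critical parts, the off-diagonal coefficient should be $-\tfrac12$, not $-1$: from
\[
\tfrac12\int|(-\Delta)^{s/2}Z^*|^2=\tfrac12\sum_i\int U_i^{2_s^*}+\tfrac12\sum_{i\neq j}\int U_i^{2_s^*-1}U_j,
\qquad
-\tfrac{1}{2_s^*}\int (Z^*)^{2_s^*}=-\tfrac{1}{2_s^*}\sum_i\int U_i^{2_s^*}-\sum_{i\neq j}\int U_i^{2_s^*-1}U_j+O(\cdot),
\]
the sum of the interaction pieces is $-\tfrac12\sum_{i\neq j}\int U_i^{2_s^*-1}U_j$, not $-\sum_{i\neq j}$. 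Since in Step~3 you (correctly) evaluate $\sum_{i\neq j}$ as $2k$ times the partner sum of $x_1^+$, your displayed coefficient would produce $-2k\cdot(\text{partner sum})$, i.e.\ constants $2B_2$ and $2B_3$ instead of $B_2$ and $B_3$. With the missing factor $\tfrac12$ restored, everything matches \eqref{B0.0}.
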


\begin{proof}
Denote $d_0:=\min\{\frac{|x_2^+-x_1^+|}{4},\frac{|x_1^--x_1^+|}{4}\}$. Using the symmetry, we obtain
\begin{equation}\label{B0.1}
  \begin{split}
  &I(Z_{\bar{r},\bar{h},\bar{y}'',\lambda})=I(Z^*_{\bar{r},\bar{h},\bar{y}'',\lambda})+O\Big(\frac{k}{\lambda^{2s+\epsilon}}\Big)\\
&=\int_{\R^N}\Big(\frac{1}{2}\big(|(-\Delta)^{\frac{s}{2}}Z^*_{\bar{r},\bar{h},\bar{y}'',\lambda}|^2 + V(y)(Z^*_{\bar{r},\bar{h},\bar{y}'',\lambda})^2\big)
- \frac{1}{2_s^*}(Z^*_{\bar{r},\bar{h},\bar{y}'',\lambda})^{2_s^*}\Big)+O\Big(\frac{k}{\lambda^{2s+\epsilon}}\Big)\\
&=k\Bigg(\int_{\R^N}U^{2^*_s}_{x_1^+,\lambda}+\int_{\R^N}U^{2^*_s-1}_{x_1^+,\lambda}\Big(\sum\limits_{j=2}^{k}U_{x_j^+,\lambda}+\sum\limits_{j=1}^{k}U_{x_j^-,\lambda}\Big)
+\int_{\R^N}V(y)U^{2}_{x_1^+,\lambda}\\
&\quad +\int_{\R^N}V(y)U_{x_1^+,\lambda}\Big(\sum\limits_{j=2}^{k}U_{x_j^+,\lambda}+\sum\limits_{j=1}^{k}U_{x_j^-,\lambda}\Big)\Bigg)
- \frac{1}{2_s^*}\int_{\R^N}(Z^{*}_{\bar{r},\bar{h},\bar{y}'',\lambda})^{2_s^*}+O\Big(\frac{k}{\lambda^{2s+\epsilon}}\Big).
  \end{split}
\end{equation}
Next, we calculate the terms one by one. By Lemma \ref{lemA.4.5}, we have
\begin{align}\label{B0.2}
  &\int_{\R^N}U^{2^*_s-1}_{x_1^+,\lambda}\Big(\sum\limits_{j=2}^{k}U_{x_j^+,\lambda}+\sum\limits_{j=1}^{k}U_{x_j^-,\lambda}\Big)\notag\\
 &=\sum\limits_{j=2}^{k}\frac{A_5}{(\lambda|x^+_j-x^+_1|)^{N-2s}}+\sum\limits_{j=1}^{k}\frac{A_5}{(\lambda|x^-_j-x^+_1|)^{N-2s}}\notag\\
 &\quad +O\Big(\sum\limits_{j=2}^{k}\frac{1}{(\lambda|x^+_j-x^+_1|)^{N-\tilde{\sigma}}}+\sum\limits_{j=1}^{k}\frac{1}{(\lambda|x^-_j-x^+_1|)^{N-\tilde{\sigma}}}\Big),
\end{align}
where $0<\tilde{\sigma}<N-4s$ is a small constant.
A direct Taylor expansion gives that
\begin{equation}\label{B0.3}
  \begin{split}
  &\int_{\R^N}V(y)U^{2}_{x_1^+,\lambda}
  =\int_{B_{d_0}(x_1^+)}V(y)U^{2}_{x_1^+,\lambda}+\int_{\R^N\setminus B_{d_0}(x_1^+)}V(y)U^{2}_{x_1^+,\lambda}\\
  &=
  \int_{B_{d_0}(x_1^+)}\Big(V(x_1^+)+\nabla V(x_1^+)\cdot(y-x_1^+)+O(|y-x_1^+|^2)\Big)U^{2}_{x_1^+,\lambda}+O\Big(\frac{1}{(\lambda d_0)^{N-2s}}\Big)\\
  &=\frac{V(\bar{r},\bar{y}'')}{\lambda^{2s}}\Big(\int_{\R^N}U^2_{0,1}+o(1)\Big)+O\Big(\frac{1}{(\lambda d_0)^{N-2s}}\Big).
  \end{split}
\end{equation}

Similarly, Using \eqref{a4.4.1b} and \eqref{a4.4.2b}, we obtain
\begin{equation}\label{B0.4}
  \begin{split}
  &\int_{\R^N}V(y)U_{x_1^+,\lambda}\Big(\sum\limits_{j=2}^{k}U_{x_j^+,\lambda}+\sum\limits_{j=1}^{k}U_{x_j^-,\lambda}\Big)\\
  &=O\bigg(\frac{1}{\lambda^{N-2s}}\Big(\sum\limits_{j=2}^{k}\frac{1}{|x^+_j-x^+_1|^{N-4s}}+\sum\limits_{j=1}^{k}\frac{1}{|x^-_j-x^+_1|^{N-4s}}\Big)\bigg).
  \end{split}
\end{equation}
Using the symmetry, together with Lemma \ref{lemA.4.6}, we have
\begin{equation}\label{B0.5}
  \begin{split}
  & \int_{\R^N}(Z^*_{\bar{r},\bar{h},\bar{y}'',\lambda})_+^{2_s^*}=2k\int_{\Omega_1^+}\Big(\sum\limits_{j=1}^{k}U_{x_j^+,\lambda}+\sum\limits_{j=1}^{k}U_{x_j^-,\lambda}\Big)^{2_s^*}\notag\\
  &=2k\Bigg\{\int_{\Omega_1^+}U^{2_s^*}_{x_1^+,\lambda}+2^*_s U^{2_s^*-1}_{x_1^+,\lambda}\Big(\sum\limits_{j=2}^{k}U_{x_j^+,\lambda}+\sum\limits_{j=1}^{k}U_{x_j^-,\lambda}\Big)\notag\\
  &\quad +\int_{\Omega_1^+}O\bigg(U^{2_s^*-2}_{x_1^+,\lambda}\Big(\sum\limits_{j=2}^{k}U_{x_j^+,\lambda}+\sum\limits_{j=1}^{k}U_{x_j^-,\lambda}\Big)^2+\Big(\sum\limits_{j=2}^{k}U_{x_j^+,\lambda}+\sum\limits_{j=1}^{k}U_{x_j^-,\lambda}\Big)^{2_s^*}\bigg)\Bigg\}\\
&=2k\Bigg\{\int_{\R^N}U^{2_s^*}_{0,1}+2^*_s\int_{\R^N}U^{2^*_s-1}_{x_1^+,\lambda}\Big(\sum\limits_{j=2}^{k}U_{x_j^+,\lambda}+\sum\limits_{j=1}^{k}U_{x_j^-,\lambda}\Big) +O\Big(\frac{1}{\lambda^N}\Big)\notag\\
&\quad  +O\Big(\sum\limits_{j=2}^{k}\frac{1}{(\lambda|x^+_j-x^+_1|)^{N-\tilde{\sigma}}}+\sum\limits_{j=1}^{k}\frac{1}{(\lambda|x^-_j-x^+_1|)^{N-\tilde{\sigma}}}\Big)\Bigg\}.\notag
  \end{split}
\end{equation}
Hence, combining \eqref{B0.1}-\eqref{B0.5}, we have
\begin{align*}
 & I(Z_{\bar{r},\bar{h},\bar{y}'',\lambda})\\
&= k\bigg(B_0+\frac{B_1V(\bar{r},\bar{y}'')}{\lambda^{2s}} -\sum\limits_{j=2}^{k}\frac{A_5}{(\lambda|x_j^+-x_1^+|)^{N-2s}} -\sum\limits_{j=1}^{k}\frac{A_5}{(\lambda|x_j^--x_1^+|)^{N-2s}} \\
&\quad +o\Big(\frac{1}{\lambda^{2s}}\Big)+O\Big(\frac{k^{N-\tilde{\sigma}}}{\lambda^{N-\tilde{\sigma}}}+\frac{k^{N-4s}}{\lambda^{N-2s}}\Big)+O\Big(\frac{1}{\lambda^{N-2s}{\bar{h}}^{N-2s}}\Big)\bigg),
\end{align*}
which, together with Lemma \ref{lemA.4}, gives \eqref{B0.0}.
\end{proof}

\begin{lemma}\label{lemB.1}
If $N>4s+1$, then
\begin{equation}\label{B1.1}
  \begin{split}
  \frac{\partial I(Z_{\bar{r},\bar{h},\bar{y}'',\lambda})}{\partial \lambda}
  &=k\bigg(-\frac{2sB_1V(\bar{r},\bar{y}'')}{\lambda^{2s+1}}+\frac{(N-2s)B_2k^{N-2s}}{\lambda^{N-2s+1}(\sqrt{1-\bar{h}^2})^{N-2s}}\\
    &\quad+\frac{(N-2s)B_3k}{\lambda^{N-2s+1}\bar{h}^{N-2s-1}\sqrt{1-\bar{h}^2}}+O\Big(\frac{1}{\lambda^{2s+1+\epsilon}}\Big)\bigg),
  \end{split}
\end{equation}
where $B_1$, $B_2$ and $B_3$ are the same positive constants in Lemma \ref{lemB.0}.
\end{lemma}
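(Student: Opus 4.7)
The plan is to avoid differentiating the error term in Lemma \ref{lemB.0} directly and instead compute $\partial_{\lambda}I(Z_{\bar r,\bar h,\bar y'',\lambda})$ from scratch via the chain rule. Since $Z_{\bar r,\bar h,\bar y'',\lambda}\in H^{s}(\R^{N})$ depends smoothly on $\lambda$, we have
\begin{equation*}
\frac{\partial I(Z_{\bar r,\bar h,\bar y'',\lambda})}{\partial\lambda}
=\Big\langle I'(Z_{\bar r,\bar h,\bar y'',\lambda}),\frac{\partial Z_{\bar r,\bar h,\bar y'',\lambda}}{\partial\lambda}\Big\rangle
=\int_{\R^{N}}\!\Big((-\Delta)^{s}Z_{\bar r,\bar h,\bar y'',\lambda}+V(y)Z_{\bar r,\bar h,\bar y'',\lambda}-(Z_{\bar r,\bar h,\bar y'',\lambda})_{+}^{2_{s}^{*}-1}\Big)\frac{\partial Z_{\bar r,\bar h,\bar y'',\lambda}}{\partial\lambda}.
\end{equation*}
First I would, exactly as in the proof of Lemma \ref{lemB.0}, replace $Z_{\bar r,\bar h,\bar y'',\lambda}$ by $Z^{*}_{\bar r,\bar h,\bar y'',\lambda}=\sum_{j=1}^{k}(U_{x_{j}^{+},\lambda}+U_{x_{j}^{-},\lambda})$, the associated commutator with the cut-off $\eta$ being absorbed in $O(k/\lambda^{2s+1+\epsilon})$ by the estimates of $J_{2}$ and $J_{3}$ carried out in Lemma \ref{lem2.4} (combined with $|\partial_{\lambda}U_{x_{j}^{\pm},\lambda}|\le C\lambda^{-1}U_{x_{j}^{\pm},\lambda}$). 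Using the symmetry of $Z^{*}$ under the action of the cyclic group on the $x_{j}^{\pm}$, it then suffices to compute the integral in the window $\Omega_{1}^{+}$ and multiply by $2k$.

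Next I would exploit $(-\Delta)^{s}U_{x,\lambda}=U_{x,\lambda}^{2_{s}^{*}-1}$ to rewrite the first piece: the combination $(-\Delta)^{s}Z^{*}-\sum_{j,\pm}U_{x_{j}^{\pm},\lambda}^{2_{s}^{*}-1}=0$, so the derivative decomposes as
\begin{equation*}
\partial_{\lambda}I(Z^{*})=\underbrace{\int_{\R^{N}}V(y)Z^{*}\partial_{\lambda}Z^{*}}_{(\mathrm{I})}+\underbrace{\int_{\R^{N}}\Big(\sum_{j,\pm}U_{x_{j}^{\pm},\lambda}^{2_{s}^{*}-1}-(Z^{*})_{+}^{2_{s}^{*}-1}\Big)\partial_{\lambda}Z^{*}}_{(\mathrm{II})}.
\end{equation*}
For $(\mathrm{I})$, a Taylor expansion of $V$ at $(\bar r,\bar y'')$ together with Lemma \ref{lemA.1} and the scaling $\int V U_{x_{1}^{+},\lambda}\partial_{\lambda}U_{x_{1}^{+},\lambda}=-\frac{2s}{\lambda^{2s+1}}V(\bar r,\bar y'')\cdot\tfrac{1}{2}\int U_{0,1}^{2}+o(\lambda^{-(2s+1)})$ plus negligible cross-terms (controlled by \eqref{a4.4.1b}-\eqref{a4.4.2b}) yields the contribution $-2sB_{1}V(\bar r,\bar y'')/\lambda^{2s+1}$. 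For $(\mathrm{II})$, the elementary inequality $|(a+b)^{p}-a^{p}-b^{p}|\le Cab^{p-1}$ together with expanding $(Z^{*})^{2_{s}^{*}-1}$ into its diagonal and off-diagonal contributions reduces the leading part to the sum $(2_{s}^{*}-1)\cdot 2k\sum_{(j,\pm)\neq(1,+)}\int U_{x_{1}^{+},\lambda}^{2_{s}^{*}-2}U_{x_{j}^{\pm},\lambda}\,\partial_{\lambda}U_{x_{1}^{+},\lambda}$, to which I would apply Lemma \ref{lemA.4.5a} to obtain the polynomial-decay interaction integrals and then sum via Lemma \ref{lemA.4.4.5} (equations \eqref{a4.3.1}, \eqref{a4.3.a2}) to produce the two main terms carrying $B_{2}$ and $B_{3}$, matching the coefficients $(N-2s)B_{2}$ and $(N-2s)B_{3}$ after simplification of the constants through $A_{5}$, $A_{6}$.

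The main obstacle will be the bookkeeping for the remainder. The error in the Taylor expansion of $(Z^{*})^{2_{s}^{*}-1}$ produces terms of the form $U_{x_{1}^{+},\lambda}^{2_{s}^{*}-3}(\sum_{(j,\pm)\neq(1,+)}U_{x_{j}^{\pm},\lambda})^{2}\partial_{\lambda}U_{x_{1}^{+},\lambda}$ and $(\sum U_{x_{j}^{\pm},\lambda})^{2_{s}^{*}}$; both must be shown to be $O(k/\lambda^{2s+1+\epsilon})$. The delicate case is the interaction between a $``+"$ bubble and the mirrored $``-"$ bubbles, where $|x_{1}^{+}-x_{1}^{-}|=2\bar r\bar h$ is small and the hypothesis \eqref{1.7} just barely keeps $\lambda\bar h\to\infty$; here I would use Lemmas \ref{lemA.1} and \ref{lemA.4} together with the range of $s$ allowed by \eqref{s} to absorb the contribution into the stated error. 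Combining the three pieces $(\mathrm{I})$, $(\mathrm{II})$, the $\eta$-commutator error, and the sharp sum formulas \eqref{a4.3.1}-\eqref{a4.3.a2}, one recovers exactly \eqref{B1.1}.
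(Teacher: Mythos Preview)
Your plan is essentially the paper's own proof: decompose $\partial_\lambda I(Z^*)$ into the potential term $(\mathrm{I})$ and the interaction term $(\mathrm{II})$, Taylor-expand $V$ at $(\bar r,\bar y'')$ for $(\mathrm{I})$, expand $(Z^*)^{2_s^*-1}$ around the dominant bubble in $\Omega_1^+$ for $(\mathrm{II})$, and sum the resulting $|x_1^+-x_j^\pm|^{-(N-2s)}$ interactions via Lemma \ref{lemA.4.4.5}. One small correction: the integral you need for the leading part of $(\mathrm{II})$ is $(2_s^*-1)\int U_{x_1^+,\lambda}^{2_s^*-2}U_{x_j^\pm,\lambda}\,\partial_\lambda U_{x_1^+,\lambda}$, which is governed by Lemma \ref{lemA.4.5} (the constant $A_5$) after a $\lambda$-differentiation, not by Lemma \ref{lemA.4.5a} (which treats $\partial_{y_l}$ and yields $A_6$); the paper obtains the coefficients $(N-2s)B_2$ and $(N-2s)B_3$ through $A_5$ alone.
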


\begin{proof}
By a direct computation, we have
\begin{equation*}
  \begin{split}
&\frac{\partial I(Z^*_{\bar{r},\bar{h},\bar{y}'',\lambda})}{\partial \lambda}\\
&=\int_{\R^N}\Big\{V(y)Z_{\bar{r},\bar{h},\bar{y}'',\lambda}^{*}- \Big((Z_{\bar{r},\bar{h},\bar{y}'',\lambda}^*)^{2_s^*-1}-\sum\limits_{j=1}^{k}U_{x_j^+,\lambda}^{2_s^*-1}-\sum\limits_{j=1}^{k}U_{x_j^-,\lambda}^{2_s^*-1}\Big)\Big\}\frac{\partial Z_{\bar{r},\bar{h},\bar{y}'',\lambda}^{*}}{\partial \lambda}.
  \end{split}
\end{equation*}
By \eqref{a4.4.1b} and \eqref{a4.4.2b}, we can check that
\begin{align*}
&\int_{\R^N}V(y)Z_{\bar{r},\bar{h},\bar{y}'',\lambda}^{*}\frac{\partial Z_{\bar{r},\bar{h},\bar{y}'',\lambda}^{*}}{\partial \lambda}\\
&=2k\Big(\int_{\R^N}V(y)U_{x_1^+,\lambda}\frac{\partial U_{x_1^+,\lambda}}{\partial \lambda} +\int_{\R^N}V(y)\frac{\partial U_{x_1^+,\lambda}}{\partial \lambda}\big(\sum\limits_{j=2}^{k}U_{x_j^+,\lambda}+\sum\limits_{j=1}^{k}U_{x_j^-,\lambda}\big)\Big)\\
&=2k\bigg(V(x_1^+)\int_{\R^N}U_{x_1^+,\lambda}\frac{\partial U_{x_1^+,\lambda}}{\partial \lambda}+\int_{\R^N}(V(y)-V(x_1^+))U_{x_1^+,\lambda}\frac{\partial U_{x_1^+,\lambda}}{\partial \lambda}\\
  &\quad +O\Big(\frac{1}{\lambda^{N-2s+1}}\big(\sum\limits_{j=2}^{k} \frac{1}{|x_j^+-x_1^+|^{N-4s}}
+\sum\limits_{j=1}^{k} \frac{1}{|x_j^--x_1^+|^{N-4s}}\big)\Big)\bigg)\\
&=k\bigg(-\frac{2sB_1 V(\bar{r},\bar{y}'')}{\lambda^{2s+1}} + o\Big(\frac{1}{\lambda^{2s+1}}\Big)+O\Big(\frac{1}{\lambda^{N-2s+1}{d_0}^{N-2s}}\Big)\\
&\quad +O\Big(\frac{1}{\lambda^{N-2s+1}}\big(\sum\limits_{j=2}^{k} \frac{1}{|x_j^+-x_1^+|^{N-4s}}
+\sum\limits_{j=1}^{k} \frac{1}{|x_j^--x_1^+|^{N-4s}}\big)\Big)\bigg).
\end{align*}

By symmetry, together with \eqref{a4.4.1} and \eqref{a4.4.2}, we have
\begin{align*}
&\int_{\R^N}\Big((Z_{\bar{r},\bar{h},\bar{y}'',\lambda}^*)^{2_s^*-1}-\sum\limits_{j=1}^{k}U_{x_j^+,\lambda}^{2_s^*-1}-\sum\limits_{j=1}^{k}U_{x_j^-,\lambda}^{2_s^*-1}\Big)\frac{\partial Z_{\bar{r},\bar{h},\bar{y}'',\lambda}^{*}}{\partial \lambda}\\
&=2k\int_{\Omega^+_1}\Big((Z_{\bar{r},\bar{h},\bar{y}'',\lambda}^*)^{2_s^*-1}-\sum\limits_{j=1}^{k}U_{x_j^+,\lambda}^{2_s^*-1}-\sum\limits_{j=1}^{k}U_{x_j^-,\lambda}^{2_s^*-1}\Big)\frac{\partial Z_{\bar{r},\bar{h},\bar{y}'',\lambda}^*}{\partial \lambda}\\
&=2k\Bigg\{\int_{\Omega^+_1}(2_s^*-1)U_{x_1^+,\lambda}^{2_s^*-2}\Big(\sum\limits_{j=2}^{k}U_{x_j^+,\lambda}+\sum\limits_{j=1}^{k}U_{x_j^-,\lambda}\Big)\frac{\partial U_{x_1^+,\lambda}}{\partial \lambda} \\
&\quad +\int_{\Omega^+_1}(2_s^*-1)U_{x_1^+,\lambda}^{2_s^*-2}\Big(\sum\limits_{j=2}^{k}U_{x_j^+,\lambda}+\sum\limits_{j=1}^{k}U_{x_j^-,\lambda}\Big)\frac{\partial }{\partial \lambda}\Big(\sum\limits_{j=2}^{k}U_{x_j^+,\lambda}+\sum\limits_{j=1}^{k}U_{x_j^-,\lambda}\Big)\\
&\quad+ O\bigg(\lambda^{-1}\int_{\Omega^+_1}U_{x_1^+,\lambda}^{2_s^*-3}\Big(\sum\limits_{j=2}^{k}U_{x_j^+,\lambda}+\sum\limits_{j=1}^{k}U_{x_j^-,\lambda}\Big)^2\bigg)\\
&\quad +O\bigg(\lambda^{-1}\int_{\Omega^+_1}\Big(\sum\limits_{j=2}^{k}U_{x_j^+,\lambda}+\sum\limits_{j=1}^{k}U_{x_j^-,\lambda}\Big)^{2_s^*-1}\bigg)\Bigg\}\\
&=k\bigg(-\sum\limits_{j=2}^{k}\frac{(N-2s)A_5}{\lambda^{N-2s+1}|x_j^+-x_1^+|^{N-2s}} - \sum\limits_{j=1}^{k}\frac{(N-2s)A_5}{\lambda^{N-2s+1}|x_j^--x_1^+|^{N-2s}}\\
&\quad +O\Big(\frac{1}{\lambda^{N-2s+1}}\Big)
+O\Big(\lambda^{-1}\big(\sum\limits_{j=2}^{k} \frac{1}{(\lambda|x_j^+-x_1^+|)^{N-\tilde{\sigma}}}
+\sum\limits_{j=1}^{k} \frac{1}{(\lambda|x_j^--x_1^+|)^{N-\tilde{\sigma}}}\big)\Big)
 \bigg).
\end{align*}
Thus, we obtain that
\begin{align*}
  &\frac{\partial I(Z_{\bar{r},\bar{h},\bar{y}'',\lambda})}{\partial \lambda}=\frac{\partial I(Z^*_{\bar{r},\bar{h},\bar{y}'',\lambda})}{\partial \lambda}+O\Big(\frac{k}{\lambda^{2s+1+\epsilon}}\Big) \\
  &=k\Big(-\frac{2sB_1 V(\bar{r},\bar{y}'')}{\lambda^{2s+1}}
  +\sum\limits_{j=2}^{k}\frac{(N-2s)A_5}{\lambda^{N-2s+1}|x_j^+-x_1^+|^{N-2s}} +\sum\limits_{j=1}^{k}\frac{(N-2s)A_5}{\lambda^{N-2s+1}|x_j^--x_1^+|^{N-2s}}\\
 &\quad
  +o\Big(\frac{1}{\lambda^{2s+1}}\Big)+O\Big(\frac{k^{N-\tilde{\sigma}}}{\lambda^{N+1-\tilde{\sigma}}}+\frac{k^{N-4s}}{\lambda^{N-2s+1}}\Big)
  +O\Big(\frac{1}{\lambda^{N-2s+1}{\bar{h}}^{N-2s}}\Big)\Big),
\end{align*}
which, together with Lemma \ref{lemA.4.4.5}, yields that \eqref{B1.1}. The proof is completed.
\end{proof}

\begin{lemma}\label{lemB.1a}
If $N>4s+1$, then
\begin{equation}\label{B1a.1}
  \begin{split}
&\frac{\partial I(Z_{\bar{r},\bar{h},\bar{y}'',\lambda})}{\partial \bar{h}}\\
&=k\bigg(-\frac{(N-2s)B_2\bar{h}k^{N-2s}}{\lambda^{N-2s}(\sqrt{1-\bar{h}^2})^{N-2s+2}}
    +\frac{(N-2s-1)B_3k}{\lambda^{N-2s}\bar{h}^{N-2s}\sqrt{1-\bar{h}^2}}
+ O\Big(\frac{\bar{h}}{\lambda^{2s+\epsilon}}\Big)\bigg),
  \end{split}
\end{equation}
where $B_2$ and $B_3$ are the same positive constants in Lemma \ref{lemB.0}.
\end{lemma}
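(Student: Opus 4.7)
The plan is to mirror the proof of Lemma \ref{lemB.1}, computing $\tfrac{\partial I(Z_{\bar r,\bar h,\bar y'',\lambda})}{\partial\bar h}=\langle I'(Z_{\bar r,\bar h,\bar y'',\lambda}),\tfrac{\partial Z_{\bar r,\bar h,\bar y'',\lambda}}{\partial\bar h}\rangle$, first reducing from $Z$ to $Z^{*}$ modulo the cut-off error, then exploiting that each $U_{x_j^{\pm},\lambda}$ solves $(-\Delta)^{s}u=u^{2_s^{*}-1}$. A short calculation, using
\[
\tfrac{\partial x_j^{\pm}}{\partial\bar h}=\Bigl(-\tfrac{\bar r\bar h}{\sqrt{1-\bar h^2}}\cos\tfrac{2(j-1)\pi}{k},\,-\tfrac{\bar r\bar h}{\sqrt{1-\bar h^2}}\sin\tfrac{2(j-1)\pi}{k},\,\pm\bar r,\,0\Bigr)
\]
together with the same decay estimates as in Lemma \ref{lemB.1}, shows the cut-off error is $O(k\bar h/\lambda^{2s+\epsilon})$; the extra factor $\bar h$ compared to Lemma \ref{lemB.1} reflects both the $\bar h$-prefactor in the first two components of $\tfrac{\partial x_j^{\pm}}{\partial\bar h}$ and the cancellation between the opposite-sign $y_3$-components of the $+$ and $-$ families.

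After this reduction I would write
\[
\tfrac{\partial I(Z^{*})}{\partial\bar h}=\int_{\R^N}\!V(y)Z^{*}\tfrac{\partial Z^{*}}{\partial\bar h}-\int_{\R^N}\!\Bigl((Z^{*})^{2_s^{*}-1}-\sum_{j=1}^{k}\bigl(U_{x_j^{+},\lambda}^{2_s^{*}-1}+U_{x_j^{-},\lambda}^{2_s^{*}-1}\bigr)\Bigr)\tfrac{\partial Z^{*}}{\partial\bar h}.
\]
For the potential part I Taylor-expand $V$ around each $x_j^{\pm}$: the leading integral $V(\bar r,\bar y'')\int U\,\tfrac{\partial U}{\partial\bar h}$ vanishes by radial symmetry, the gradient term contributes at order $k\bar h/\lambda^{2s+1}$, and the cross-bubble pieces $\int VU_{x_1^{+}}\tfrac{\partial U_{x_j^{\pm}}}{\partial\bar h}$ are controlled through \eqref{a4.4.1b}--\eqref{a4.4.2b}; all of these are absorbed into the $O(\bar h/\lambda^{2s+\epsilon})$ remainder. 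For the nonlinear interaction I restrict to $\Omega_1^{+}$ by symmetry and expand
\[
(Z^{*})^{2_s^{*}-1}-\sum_j(U_{x_j^{+}}^{2_s^{*}-1}+U_{x_j^{-}}^{2_s^{*}-1})=(2_s^{*}-1)U_{x_1^{+}}^{2_s^{*}-2}\Bigl(\sum_{j\ge 2}U_{x_j^{+}}+\sum_j U_{x_j^{-}}\Bigr)+\text{l.o.t.},
\]
then apply Lemma \ref{lemA.4.5a} to identify each dyadic integral with $\frac{A_6(x_j^{\pm}-x_1^{+})_{\ell}}{\lambda^{N-2s}|x_j^{\pm}-x_1^{+}|^{N-2s+2}}$, where $\ell$ runs over the components of $\tfrac{\partial x_1^{+}}{\partial\bar h}$.

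The final step is the angular sum: the inner product of $\tfrac{\partial x_1^{+}}{\partial\bar h}$ with the vectors $(x_j^{\pm}-x_1^{+})/|x_j^{\pm}-x_1^{+}|^{N-2s+2}$, summed over $j$, collapses via \eqref{a4.3.2} and \eqref{a4.3.2a} of Lemma \ref{lemA.4.4.5} into the two claimed main terms: the $(+,+)$ contribution (whose projection favours the first two components and so inherits the $\bar h$-prefactor of $\tfrac{\partial x_1^{+}}{\partial \bar h}$) yields $-\tfrac{(N-2s)B_2\bar h k^{N-2s}}{\lambda^{N-2s}(\sqrt{1-\bar h^2})^{N-2s+2}}$, while the $(+,-)$ contribution, dominated by the $y_3$-component $\pm 2\bar r\bar h$, yields $\tfrac{(N-2s-1)B_3 k}{\lambda^{N-2s}\bar h^{N-2s}\sqrt{1-\bar h^2}}$. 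The hard part will be precisely this bookkeeping for the $(+,-)$ interactions: since $\bar h$ is small the pairs $(x_1^{+},x_j^{-})$ become close, so the singular weights $|x_j^{-}-x_1^{+}|^{-(N-2s+2)}$ must be balanced carefully against $(x_j^{-}-x_1^{+})\cdot\tfrac{\partial x_1^{+}}{\partial\bar h}$ before Lemma \ref{lemA.4.4.5} produces the sharp constant $B_3$, which is the delicate computation flagged in Remark \ref{add-rem}.
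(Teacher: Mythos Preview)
Your proposal is correct and follows essentially the same route as the paper's proof: reduce from $Z$ to $Z^{*}$, split into the potential and interaction pieces, apply Lemma~\ref{lemA.4.5a} to the leading interaction integrals, and then invoke the angular sums of Lemma~\ref{lemA.4.4.5} (in particular \eqref{a4.3.1}, \eqref{a4.3.2}, \eqref{a4.3.2a}) to extract $B_2$ and $B_3$. One small refinement the paper makes explicit is that the gradient term in the potential expansion actually vanishes identically, since $\tfrac{\partial x_1^{+}}{\partial\bar h}$ is tangent to the sphere $|y'|=\bar r$ while $\nabla_{y'}V$ is radial (giving $\tfrac{\partial V}{\partial\bar r}\bar r\bar h-\tfrac{\partial V}{\partial\bar r}\bar r\bar h=0$); your cruder bound $O(\bar h/\lambda^{2s+1})$ is still within the stated error, so this does not affect the argument.
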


\begin{proof}
We have
\begin{align*}
&\frac{\partial I(Z_{\bar{r},\bar{h},\bar{y}'',\lambda}^{*})}{\partial \bar{h}}\\
&=\int_{\R^N}\Big\{V(y)Z_{\bar{r},\bar{h},\bar{y}'',\lambda}^{*}-\Big((Z_{\bar{r},\bar{h},\bar{y}'',\lambda}^*)^{2_s^*-1}-\sum\limits_{j=1}^{k}U_{x_j^+,\lambda}^{2_s^*-1}-\sum\limits_{j=1}^{k}U_{x_j^-,\lambda}^{2_s^*-1}\Big)\Big\}\frac{\partial Z_{\bar{r},\bar{h},\bar{y}'',\lambda}^{*}}{\partial \bar{h}}.
\end{align*}
Noting that
$$\Big\langle x_j^+-x_1^+,\frac{\partial(-x_1^+)}{\partial \bar{h}}\Big\rangle=-2\bar{r}^2\bar{h}\sin^2(\frac{(j-1)\pi}{k}),$$
and
$$\Big\langle x_j^+-x_1^+,\frac{\partial(-x_1^+)}{\partial \bar{h}}\Big\rangle=2\bar{r}^2\bar{h}\Big(1-\sin^2(\frac{(j-1)\pi}{k})\Big),$$
which, together with \eqref{a4.4.1c} and \eqref{a4.4.2c}, gives that
\begin{equation}\label{B1a.2}
  \begin{split}
&\int_{\R^N}V(y)Z_{\bar{r},\bar{h},\bar{y}'',\lambda}^{*}\frac{\partial Z_{\bar{r},\bar{h},\bar{y}'',\lambda}^{*}}{\partial \bar{h}}\\
&=2k\bigg(\int_{\R^N}V(y)U_{x_1^+,\lambda}\frac{\partial U_{x_1^+,\lambda}}{\partial \bar{h}}+\int_{\R^N}V(y)\frac{\partial U_{x_1^+,\lambda}}{\partial \bar{h}}\big(\sum_{j=2}^{k}U_{x_j^+,\lambda}+\sum_{j=1}^{k}U_{x_j^-,\lambda}\big)\bigg)\\
&=O\bigg(\frac{k\bar{h}}{\lambda^{N-2s}}\Big(\sum_{j=2}^{k} \frac{1}{|x_j^+-x_1^+|^{N-4s}}+\sum_{j=1}^{k}\frac{1}{|x_j^--x_1^+|^{N-4s+2}}\Big)\bigg) +O\Big(\frac{k\lambda}{(\lambda d_0)^{N-2s+1}}\Big)\\
&=O\Big(\frac{\bar{h}k^{N-4s+1}}{\lambda^{N-2s}}\Big)+O\Big(\frac{k}{\lambda^{N-2s}d_0^{N-2s+1}}\Big),
  \end{split}
\end{equation}
since we can compute
\begin{align*}
  &\int_{\R^N}V(y)U_{x_1^+,\lambda}\frac{\partial U_{x_1^+,\lambda}}{\partial \bar{h}} \\
  & =\frac{1}{2}\int_{B_{d_0}(x^+_1)}\Big(V(x^+_1)+\nabla V(x^+_1)\cdot(y-x^+_1)+O(|y-x^+_1|^2)\Big)\frac{\partial U^2_{x_1^+,\lambda}}{\partial \bar{h}}\\
  &\quad +O\Big(\int_{{\R^N}\backslash B_{d_0}(x^+_1)}V(y)U_{x_1^+,\lambda}\frac{\partial U_{x_1^+,\lambda}}{\partial \bar{h}}\Big)\\
  &=\frac{1}{2}\int_{B_{d_0}(x^+_1)}\Big(\nabla V(x^+_1)\cdot(y-x^+_1)\Big)\frac{\partial U^2_{x_1^+,\lambda}}{\partial \bar{h}}+O\Big(\frac{1}{\lambda^{N-2s}d_0^{N-2s+1}}\Big)\\
  &=-\frac{1}{2}\int_{{B_{d_0}(x^+_1)}}U^2_{x_1^+,\lambda}\Big(\frac{\partial V(\bar{r},\bar{y}'')}{\partial \bar{r}}\bar{r}\bar{h}-\frac{\partial V(\bar{r},\bar{y}'')}{\partial \bar{r}}\bar{r}\bar{h}\Big)+O\Big(\frac{1}{\lambda^{N-2s}d_0^{N-2s+1}}\Big)\\
  &=O\Big(\frac{1}{\lambda^{N-2s}d_0^{N-2s+1}}\Big).
\end{align*}

By symmetry and Lemma \ref{lemA.4.5a}, we have
\begin{align*}
&\int_{\R^N}\Big((Z_{\bar{r},\bar{h},\bar{y}'',\lambda}^*)^{2_s^*-1}-\sum\limits_{j=1}^{k}U_{x_j^+,\lambda}^{2_s^*-1}-\sum\limits_{j=1}^{k}U_{x_j^-,\lambda}^{2_s^*-1}\Big)\frac{\partial Z_{\bar{r},\bar{h},\bar{y}'',\lambda}^*}{\partial \bar{h}}\\
&=2k\bigg(\int_{\R^N}(2_s^*-1)U_{x_1^+,\lambda}^{2_s^*-2}\Big(\sum\limits_{j=2}^{k}U_{x_j^+,\lambda}+\sum\limits_{j=1}^{k}U_{x_j^-,\lambda}\Big)\frac{\partial U_{x^+_1,\lambda}}{\partial \bar{h}}\\
&\quad +O\Big(\sum\limits_{j=2}^{k}\frac{\bar{h}}{(\lambda|x_j^+-x_1^+|)^{N-\tilde{\sigma}}}+\sum\limits_{j=1}^{k}\frac{\bar{h}}{(\lambda|x_j^--x_1^+|)^{N-\tilde{\sigma}}}\Big)\bigg)\\
&=2k\bigg(-\sum\limits_{j=2}^{k}\frac{2(2_s^*-1) A_6\bar{r}^2\bar{h}}{\lambda^{N-2s}|x_j^+-x_1^+|^{N+2-2s}}\Big(\sin^2\big(\frac{(j-1)\pi}{k}\big)\Big)\\
&\quad +\sum\limits_{j=1}^{k}\frac{2(2_s^*-1) A_6\bar{r}^2\bar{h}}{\lambda^{N-2s}|x_j^--x_1^+|^{N+2-2s}}\Big(1-\sin^2\big(\frac{(j-1)\pi}{k}\big)\Big)\\
&\quad+O\Big(\sum\limits_{j=2}^{k}\frac{\bar{h}}{(\lambda|x_j^+-x_1^+|)^{N-\tilde{\sigma}}}\Big) +O\Big(\sum\limits_{j=1}^{k}\frac{\bar{h}}{(\lambda|x_j^--x_1^+|)^{N-\tilde{\sigma}}}\Big)\bigg)\\
 &=2k\bigg(-\sum\limits_{j=2}^{k}\frac{(N-2s)A_5}{\lambda^{N-2s}|x_j^+-x_1^+|^{N-2s}}\frac{\bar{h}}{2(1-\bar{h}^2)}\\
 &\quad   +\sum\limits_{j=1}^{k}\frac{2(N-2s)A_5\bar{r}^2\bar{h}}{\lambda^{N-2s}|x_j^--x_1^+|^{N+2-2s}}\Big(1-\sin^2\big(\frac{(j-1)\pi}{k}\big)\Big)\\
 &\quad +O\Big(\sum\limits_{j=2}^{k}\frac{\bar{h}}{(\lambda|x_j^+-x_1^+|)^{N-\tilde{\sigma}}}+\sum\limits_{j=1}^{k}\frac{\bar{h}}{(\lambda|x_j^--x_1^+|)^{N-\tilde{\sigma}}}\Big)\bigg)
\end{align*}
which, together with \eqref{B1a.2} and Lemma \ref{lemA.4.4.5}, gives
\begin{align*}
&\frac{\partial I(Z_{\bar{r},\bar{h},\bar{y}'',\lambda})}{\partial \bar{h}}=\frac{\partial I(Z^*_{\bar{r},\bar{h},\bar{y}'',\lambda})}{\partial \bar{h}}+O\Big(\frac{k\bar{h}}{\lambda^{2s+\epsilon}}\Big) \\
&=k\bigg(-\frac{(N-2s)B_2\bar{h}k^{N-2s}}{\lambda^{N-2s}(\sqrt{1-\bar{h}^2})^{N-2s+2}}
    +\frac{(N-2s-1)B_3k}{(\lambda\bar{h})^{N-2s}\sqrt{1-\bar{h}}}
    -\frac{B_3k}{\lambda^{N-2s}\bar{h}^{N-2s-2}(\sqrt{1-\bar{h}})^3} \\
&\quad  +O\Big(\frac{\bar{h}k^{N-4s}}{\lambda^{N-2s}}\Big)
        +O\Big(\frac{\bar{h}k^{N-\tilde{\sigma}}}{\lambda^{^{N-\tilde{\sigma}}}}\Big)
        +O\Big(\frac{1}{\lambda^{N-2s}\bar{h}^{N-2s+1}}\Big)+O\Big(\frac{\bar{h}}{\lambda^{2s+\epsilon}}\Big)\bigg).
\end{align*}
If $\lambda \sim k^{\frac{N-2s}{N-4s}}$ and $\bar{h}\sim k^{-\frac{N-2s-1}{N-2s+1}}$, then the term $\frac{B_3k}{\lambda^{N-2s}\bar{h}^{N-2s-2}(\sqrt{1-\bar{h}})^3}$ can be absorbed in $O\Big(\frac{\bar{h}}{\lambda^{2s+\epsilon}}\Big)$, and we obtain the result.
\end{proof}

Note that $Z_{i,l}^\pm=O(\lambda Z_{x_i^\pm,\lambda})$, $l=2,4,\cdots,N$. Similarly, we can prove the following lemma.

\begin{lemma}\label{lemB.3}
If $N>4s+1$, then we have
\begin{equation*}
  \begin{split}
  \frac{\partial I(Z_{\bar{r},\bar{h},\bar{y}'',\lambda})}{\partial \bar{r}}&=2k\bigg(\frac{B_1}{\lambda^{2s}}\frac{\partial V(\bar{r},\bar{y}'')}{\partial\bar{r}}+\sum\limits_{j=2}^{k}\frac{(N-2s)A_5}{\bar{r}\lambda^{N-2s}|x_j^+-x_1^+|^{N-2s}}\\
 &\quad +\sum\limits_{j=1}^{k}\frac{(N-2s)A_5}{\bar{r}\lambda^{N-2s}|x_j^--x_1^+|^{N-2s}}+O\Big(\frac{1}{\lambda^{s+\epsilon}}\Big)\bigg),
  \end{split}
\end{equation*}
and
\begin{equation*}
  \frac{\partial I(Z_{\bar{r},\bar{h},\bar{y}'',\lambda})}{\partial \bar{y}^{''}_j}=2k\bigg(\frac{B_1}{\lambda^{2s}}\frac{\partial V(\bar{r},\bar{y}'')}{\partial\bar{y}^{''}_j}+O\Big(\frac{1}{\lambda^{s+\epsilon}}\Big)\bigg).
\end{equation*}
\end{lemma}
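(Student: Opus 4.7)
The plan is to mirror the template used in Lemmas B.1 and B.1a. First I would replace $Z_{\bar{r},\bar{h},\bar{y}'',\lambda}$ by $Z^{*}_{\bar{r},\bar{h},\bar{y}'',\lambda}$ at a cost of $O(k/\lambda^{s+\epsilon})$, exactly as in those proofs, using that $(1-\eta)U_{x_j^\pm,\lambda}$ is supported away from $(r_0,y_0'')$. Then, since $(-\Delta)^s U_{x_j^{\pm},\lambda}=U_{x_j^{\pm},\lambda}^{2_s^*-1}$, the derivative decomposes as
\begin{equation*}
\frac{\partial I(Z^{*})}{\partial\bar{r}}=\int_{\R^N}V(y)\,Z^{*}\,\frac{\partial Z^{*}}{\partial\bar{r}}-\int_{\R^N}\Big((Z^{*})^{2_s^*-1}-\sum_{j=1}^{k}\big(U_{x_j^+,\lambda}^{2_s^*-1}+U_{x_j^-,\lambda}^{2_s^*-1}\big)\Big)\frac{\partial Z^{*}}{\partial\bar{r}},
\end{equation*}
and similarly for $\partial/\partial\bar{y}_j''$.

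For the potential term I would use $\int V(y)Z^{*}\partial_{\bar{r}}Z^{*}=\tfrac{1}{2}\partial_{\bar{r}}\int V(y)(Z^{*})^2$ and reduce to $2k\int V(y)U_{x_1^+,\lambda}^2$ by symmetry, the cross terms being controlled by (A.1b)--(A.2b). The rescaling $z=\lambda(y-x_1^+)$ and Taylor expansion give $\int V(y)U_{x_1^+,\lambda}^2=\lambda^{-2s}V(x_1^+)B_1+O(\lambda^{-2s-\epsilon})$ as in Lemma B.0, and since $|(x_1^+)'|=\bar{r}$ the chain rule yields $\partial_{\bar{r}}V(x_1^+)=\partial_{\bar{r}}V(\bar{r},\bar{y}'')$, producing the leading term $B_1\partial_{\bar{r}}V/\lambda^{2s}$. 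For the nonlinear piece I would work on $\Omega_1^+$, linearize
$(Z^{*})^{2_s^*-1}-\sum U^{2_s^*-1}\approx(2_s^*-1)U_{x_1^+,\lambda}^{2_s^*-2}\big(\sum_{j\ne 1}U_{x_j^+,\lambda}+\sum_{j}U_{x_j^-,\lambda}\big)$, and use $\partial_{\bar{r}}U_{x_1^+,\lambda}=-\sum_{l}(\partial_{\bar{r}}x_1^+)_l\partial_{y_l}U_{x_1^+,\lambda}$, so that Lemma A.4.5a converts each diagonal integral into $A_6(x_j^{\pm}-x_1^+)_l/(\lambda^{N-2s}|x_j^{\pm}-x_1^+|^{N-2s+2})$ plus lower order. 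The geometric identity
\begin{equation*}
\big\langle x_j^{\pm}-x_1^+,\partial_{\bar{r}}x_1^+\big\rangle=-\frac{|x_j^{\pm}-x_1^+|^2}{2\bar{r}},
\end{equation*}
proved by expanding the first three coordinates with $\cos(\theta_j-\theta_1)-1=-|x_j^+-x_1^+|^2/(2\bar{r}^2(1-\bar{h}^2))$ (and adding the contribution $-2\bar{r}\bar{h}^2$ from the third coordinate in the $-$ case), combined with $(2_s^*-1)A_6=(N-2s)A_5$, yields the claimed sums $\sum(N-2s)A_5/(\bar{r}\lambda^{N-2s}|x_j^{\pm}-x_1^+|^{N-2s})$.

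The $\partial/\partial\bar{y}_j''$ identity is easier, since $\partial_{\bar{y}_j''}x_\ell^{\pm}=e_j$ for every $\ell$ while the differences $x_i^{\pm}-x_\ell^{\pm}$ vanish in the $y''$-directions, so the geometric inner product in the nonlinear term is zero and only the potential contribution $B_1\partial_{\bar{y}_j''}V/\lambda^{2s}$ survives. The main technical obstacle will be the uniform error control: consolidating the Taylor remainder in $V$, the off-diagonal remainders from Lemmas A.4.5 and A.4.5a, the cutoff correction, and the subquadratic nonlinear remainders of type $U_{x_1^+,\lambda}^{2_s^*-3}(\sum U_{x_j^{\pm},\lambda})^2\partial_{\bar{r}}Z^{*}$ and $(\sum U_{x_j^{\pm},\lambda})^{2_s^*-1}\partial_{\bar{r}}U_{x_1^+,\lambda}$, into $O(k/\lambda^{s+\epsilon})$ using Lemmas A.4, A.4.6 and the parameter ranges in (1.7). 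The hypothesis $N>4s+1$ enters precisely here, to make the second-order Taylor integrals and the requisite lattice sums convergent.
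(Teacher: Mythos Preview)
Your proposal is correct and follows precisely the template the paper has in mind: the paper gives no separate proof of this lemma, only the remark that $Z_{i,l}^\pm=O(\lambda Z_{x_i^\pm,\lambda})$ for $l=2,4,\dots,N$ and that the argument is the same as in Lemmas~\ref{lemB.1} and~\ref{lemB.1a}. Your outline fills in exactly those details---the passage from $Z$ to $Z^*$, the potential/nonlinear split, the Taylor expansion of $V$, the linearization combined with Lemma~\ref{lemA.4.5a}, the geometric identity $\langle x_j^{\pm}-x_1^+,\partial_{\bar r}x_1^+\rangle=-|x_j^{\pm}-x_1^+|^2/(2\bar r)$, and the observation that $(x_j^{\pm}-x_1^+)$ has vanishing $y''$-components so the interaction sums drop out of $\partial I/\partial\bar y_j''$---and the relation $(2_s^*-1)A_6=(N-2s)A_5$ you invoke is exactly what the paper records after Lemma~\ref{lemA.4.5a}.
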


\section{Local Pohozaev identities for the fractional Laplacian}\label{secC}

\renewcommand{\theequation}{C.\arabic{equation}}
In this section, we establish the local Pohozaev identities for the fractional Laplacian.
\begin{lemma}\label{lem3.0}
There hold that for $i=4,\cdots,N$
\begin{equation}\label{C.1}
  \begin{split}
    &-\int_{\partial''{B_\rho^+}} t^{1-2s}\frac{\partial\tilde{u}_k}{\partial \nu}\frac{\partial\tilde{u}_k}{\partial y_i}+\frac{1}{2}\int_{\partial''{B_\rho^+}} t^{1-2s}|\nabla\tilde{u}_k|^2\nu_i\\
    &=\int_{B_\rho}\Big(-V(r,y'')u_k+(u_k)_+^{2_s^*-1}+\sum\limits_{l=1}^N
c_l\sum\limits_{j=1}^k\big(Z_{x_j^+,\lambda}^{2_s^*-2}Z_{j,l}^+ +Z_{x_j^-,\lambda}^{2_s^*-2}Z_{j,l}^-\big)\Big)\frac{\partial u_k}{\partial y_i},
  \end{split}
\end{equation}
and
\begin{equation}\label{C.2}
  \begin{split}
    &\int_{\partial''{B_\rho^+}} \Big(-t^{1-2s}\langle\nabla\tilde{u}_k,Y\rangle\frac{\partial\tilde{u}_k}{\partial \nu}+\frac{1}{2}t^{1-2s}|\nabla\tilde{u}_k|^2\langle Y,\nu\rangle\Big) +\frac{2s-N}{2}\int_{\partial{B_\rho^+}}t^{1-2s}\frac{\partial\tilde{u}_k}{\partial \nu}\tilde{u}_k\\
    &=\int_{{B_\rho}}\Big(-V(r,y'')u_k+(u_k)_+^{2_s^*-1}+\sum\limits_{l=1}^N
c_l\sum\limits_{j=1}^k\big(Z_{x_j^+,\lambda}^{2_s^*-2}Z_{j,l}^+ +Z_{x_j^-,\lambda}^{2_s^*-2}Z_{j,l}^-\big)\Big)\langle \nabla u_k,y\rangle.
  \end{split}
\end{equation}
\end{lemma}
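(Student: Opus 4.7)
The plan is to derive both identities from the degenerate-elliptic extension equation $\text{div}(t^{1-2s}\nabla\tilde u_k)=0$ on $B_\rho^+$ via Pohozaev-type multiplier arguments, combined with the Neumann-type boundary relation
\begin{equation*}
-\lim_{t\to 0^+}t^{1-2s}\partial_t\tilde u_k=(-\Delta)^s u_k=-V u_k+(u_k)_+^{2_s^*-1}+\sum_{l=1}^N c_l\sum_{j=1}^{k}\bigl(Z_{x_j^+,\lambda}^{2_s^*-2}Z_{j,l}^++Z_{x_j^-,\lambda}^{2_s^*-2}Z_{j,l}^-\bigr)
\end{equation*}
encoded by the extension problem \eqref{3.1} together with equation \eqref{2.21}. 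In each case the scheme is the same: test the extension equation against a suitable vector-field derivative of $\tilde u_k$, integrate by parts on $B_\rho^+$, and split the resulting boundary integral as $\partial B_\rho^+=\partial'B_\rho^+\cup\partial''B_\rho^+$; the $\partial'$ piece is then rewritten as a bulk integral on $B_\rho$ using the boundary relation above.

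For \eqref{C.1} I test against $\partial_{y_i}\tilde u_k$ ($i=4,\dots,N$). Integration by parts on $B_\rho^+$ produces the boundary term $\int_{\partial B_\rho^+}t^{1-2s}\partial_\nu\tilde u_k\,\partial_{y_i}\tilde u_k$ together with the bulk term $\int_{B_\rho^+}t^{1-2s}\nabla\tilde u_k\cdot\nabla(\partial_{y_i}\tilde u_k)=\tfrac12\int_{B_\rho^+}t^{1-2s}\partial_{y_i}|\nabla\tilde u_k|^2$; since $t^{1-2s}$ does not depend on $y_i$, one further integration by parts in $y_i$ collapses this bulk piece into $\tfrac12\int_{\partial B_\rho^+}t^{1-2s}|\nabla\tilde u_k|^2\nu_i$. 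On $\partial'B_\rho^+$ the outward normal is $(0,\dots,0,-1)$, so $\nu_i=0$ and the second integral is supported only on $\partial''B_\rho^+$; the $\partial'$ piece of the first integral, after writing $\partial_\nu=-\partial_t$ and invoking the boundary relation, turns into $\int_{B_\rho}\bigl(-V u_k+(u_k)_+^{2_s^*-1}+\sum_{l,j}\cdots\bigr)\partial_{y_i}u_k$, which yields \eqref{C.1} upon rearrangement.

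For \eqref{C.2} I test against $\langle Y,\nabla\tilde u_k\rangle$, where $Y=(y-y_0,t)\in\R^{N+1}$ and $y_0$ denotes the centre of $B_\rho$. The Pohozaev computation relies on the pointwise identity $\nabla\tilde u_k\cdot\nabla\langle Y,\nabla\tilde u_k\rangle=|\nabla\tilde u_k|^2+\tfrac12\langle Y,\nabla|\nabla\tilde u_k|^2\rangle$ together with $\text{div}(t^{1-2s}Y)=(N+2-2s)t^{1-2s}$, and applying the divergence theorem produces the intermediate identity
\begin{equation*}
\int_{\partial B_\rho^+}t^{1-2s}\partial_\nu\tilde u_k\langle Y,\nabla\tilde u_k\rangle-\tfrac12\int_{\partial B_\rho^+}t^{1-2s}|\nabla\tilde u_k|^2\langle Y,\nu\rangle+\tfrac{N-2s}{2}\int_{B_\rho^+}t^{1-2s}|\nabla\tilde u_k|^2=0.
\end{equation*}
To eliminate the residual bulk term I test the extension equation separately against $\tilde u_k$, obtaining $\int_{B_\rho^+}t^{1-2s}|\nabla\tilde u_k|^2=\int_{\partial B_\rho^+}t^{1-2s}\partial_\nu\tilde u_k\,\tilde u_k$; this converts the bulk contribution into the $\tfrac{2s-N}{2}\int_{\partial B_\rho^+}t^{1-2s}\partial_\nu\tilde u_k\,\tilde u_k$ term appearing in the statement. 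Finally, splitting the three boundary integrals into $\partial'$ and $\partial''$ parts, the $\partial'$ contribution of $|\nabla\tilde u_k|^2\langle Y,\nu\rangle$ vanishes since $\langle Y,\nu\rangle=-t=0$ on $\{t=0\}$, while the $\partial'$ contribution of the first integral, processed once more through the boundary relation, becomes the right-hand side of \eqref{C.2}.

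The main technical obstacle will be justifying the passage to the limit $t\to 0^+$ in these surface integrals, because the weight $t^{1-2s}$ is degenerate/singular at $\{t=0\}$ and $\tilde u_k$ has only weighted Sobolev regularity up to the boundary. I plan to handle this in the standard way: first perform every integration by parts on $B_\rho^+\cap\{t>\epsilon\}$ and then send $\epsilon\to 0^+$, exploiting the interior smoothness $\tilde u_k\in C^\infty(\R_+^{N+1})$, the integrability of $t^{1-2s}|\nabla\tilde u_k|^2$ near $\{t=0\}$, and the $C^1$-regularity of $(-\Delta)^s u_k$ on $B_\rho$ that follows from \eqref{2.21} combined with the fractional elliptic regularity already quoted in Section~\ref{sec2}.
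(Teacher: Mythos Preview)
Your proposal is correct and follows essentially the same route as the paper's proof: multiply the extension equation by $\partial_{y_i}\tilde u_k$ (respectively $\langle Y,\nabla\tilde u_k\rangle$), integrate by parts on $B_\rho^+$, split the boundary as $\partial'B_\rho^+\cup\partial''B_\rho^+$, and for \eqref{C.2} eliminate the residual bulk term $\int_{B_\rho^+}t^{1-2s}|\nabla\tilde u_k|^2$ by testing the equation separately against $\tilde u_k$. One small slip: you set $Y=(y-y_0,t)$, but the right-hand side of \eqref{C.2} carries $\langle\nabla u_k,y\rangle$, so the correct choice (and the one implicit in the paper) is $Y=(y,t)$; the divergence computation $\text{div}(t^{1-2s}Y)=(N+2-2s)t^{1-2s}$ and the coefficient $\frac{N-2s}{2}$ are unaffected, only the trace on $\partial'B_\rho^+$ changes.
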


\begin{proof}
The proof is standard. For the readers' convenience, we still give the proof. Multiplying \eqref{3.1} by $\frac{\partial\tilde{u}_k}{\partial y_i}~(i=4,\cdots,N)$, integrating by parts on the domain ${B_\rho^+}$, we have
\begin{equation}\label{3.3A}
  \begin{split}
    0 &= -\int_{{B_\rho^+}} \text{div}(t^{1-2s}\nabla\tilde{u}_k)\frac{\partial\tilde{u}_k}{\partial y_i}\\
    &=-\int_{\partial{B_\rho^+}}t^{1-2s}\frac{\partial\tilde{u}_k}{\partial \nu}\frac{\partial\tilde{u}_k}{\partial y_i}+\int_{{B_\rho^+}}t^{1-2s}\nabla\tilde{u}_k\cdot\nabla\frac{\partial\tilde{u}_k}{\partial y_i}\\
    &=-\int_{\partial{B_\rho^+}}t^{1-2s}\frac{\partial\tilde{u}_k}{\partial \nu}\frac{\partial\tilde{u}_k}{\partial y_i}+\frac{1}{2}\int_{\partial{B_\rho^+}}t^{1-2s}|\nabla\tilde{u}_k|^2 \nu_i\\
    &=-\int_{\partial''{B_\rho^+}}t^{1-2s}\frac{\partial \tilde{u}_k}{\partial \nu}\frac{\partial \tilde{u}_k}{\partial y_i} +\int_{{B_\rho}}\lim\limits_{t\to0^+}t^{1-2s}\frac{\partial\tilde{u}_k}{\partial t}\frac{\partial\tilde{u}_k}{\partial y_i} +\frac{1}{2}\int_{\partial''{B_\rho^+}}t^{1-2s}|\nabla\tilde{u}_k|^2 \nu_i,
  \end{split}
\end{equation}
where we used $\nu_i=0$ and $\frac{\partial\tilde{u}_k}{\partial \nu}=-\frac{\partial\tilde{u}_k}{\partial t}$ on $\partial'{B_\rho^+}$.
We complete the proof of \eqref{C.1}.

Similarly, multiplying \eqref{3.1} by $\langle\nabla\tilde{u}_k,Y\rangle$, integrating by parts on the domain ${B_\rho^+}$, we have
\begin{equation}\label{3.3B}
  \begin{split}
    0 &= -\int_{{B_\rho^+}} \text{div}(t^{1-2s}\nabla\tilde{u}_k)\langle\nabla\tilde{u}_k,Y\rangle\\
    &=-\int_{\partial{B_\rho^+}}t^{1-2s}\frac{\partial\tilde{u}_k}{\partial \nu}\langle\nabla\tilde{u}_k,Y\rangle +\int_{{B_\rho^+}}t^{1-2s}\nabla\tilde{u}_k\cdot\nabla\langle\nabla\tilde{u}_k,Y\rangle\\
    &=\int_{\partial{B_\rho^+}}t^{1-2s}\Big(-\frac{\partial\tilde{u}_k}{\partial \nu}\langle\nabla\tilde{u}_k,Y\rangle +\frac{1}{2}|\nabla\tilde{u}_k|^2 \langle Y,\nu\rangle\Big) -\frac{N-2s}{2}\int_{{B_\rho^+}}t^{1-2s}|\nabla\tilde{u}_k|^2. \\
  \end{split}
\end{equation}
On the other hand, we have that
\begin{equation}\label{3.3C}
  \begin{split}
    0 &= -\int_{{B_\rho^+}} \text{div}(t^{1-2s}\nabla\tilde{u}_k)\tilde{u}_k=-\int_{\partial{B_\rho^+}}t^{1-2s}\frac{\partial\tilde{u}_k}{\partial \nu}\tilde{u}_k +\int_{{B_\rho^+}}t^{1-2s}|\nabla\tilde{u}_k|^2.
  \end{split}
\end{equation}
Note that $\langle Y,\nu \rangle=0$ on $\partial'{B_\rho^+}$. Inserting \eqref{3.3C} into \eqref{3.3B}, we obtain that
\begin{equation}\label{3.3D}
  \begin{split}
  &\frac{N-2s}{2}\int_{\partial{B_\rho^+}}t^{1-2s}\frac{\partial\tilde{u}_k}{\partial \nu}\tilde{u}_k\\
  &= -\int_{\partial{B_\rho^+}}t^{1-2s}\frac{\partial\tilde{u}_k}{\partial \nu}\langle\nabla\tilde{u}_k,Y\rangle +\frac{1}{2}\int_{\partial{B_\rho^+}}t^{1-2s}|\nabla\tilde{u}_k|^2 \langle Y,\nu\rangle \\
    &=-\int_{\partial''{B_\rho^+}} t^{1-2s}\langle\nabla\tilde{u}_k,Y\rangle\frac{\partial\tilde{u}_k}{\partial \nu}
    +\int_{{B_\rho}}\lim\limits_{t\to0^+}(t^{1-2s}\frac{\partial\tilde{u}_k}{\partial t} )\langle\nabla u_k,y\rangle\\
    &\qquad +\frac{1}{2}\int_{\partial''{B_\rho^+}}t^{1-2s}|\nabla\tilde{u}_k|^2\langle Y,\nu\rangle.
  \end{split}
\end{equation}
We complete the proof of \eqref{C.2}.
\end{proof}

\section*{Acknowledgments}
The authors would like to thank the referee for the useful comments and suggestions and thank Yang Zhou for the helpful discussion with him.
This paper was supported by National Key Research and Development of China (No. 2022YFA1006900) and NSFC grants (No.12071169).

\end{document}